\documentclass[11pt]{article}
\usepackage[T1]{fontenc}
\usepackage{lmodern}
\usepackage{amsmath,amsthm,amssymb}
\usepackage[usenames,dvipsnames]{xcolor}
\usepackage{enumerate}
\usepackage{graphicx}
\usepackage{cite}
\usepackage{comment}
\usepackage{oands}
\usepackage{tikz}
\usepackage{changepage}
\usepackage{bbm}
\usepackage{mathtools}
\usepackage[margin=1in]{geometry}
\usepackage[pagewise,mathlines]{lineno}
\usepackage{appendix}
\usepackage{stmaryrd}
\usepackage{multicol}
\usepackage{microtype}
\usepackage[colorinlistoftodos]{todonotes}
\usepackage{dsfont}

\usepackage[pdftitle={Supercritical Liouville quantum gravity and CLE$_4$},
  pdfauthor={Morris Ang and Ewain Gwynne},
colorlinks=true,linkcolor=NavyBlue,urlcolor=RoyalBlue,citecolor=PineGreen,bookmarks=true,bookmarksopen=true,bookmarksopenlevel=2,unicode=true,linktocpage]{hyperref}

\setcounter{tocdepth}{2}

\theoremstyle{plain}
\newtheorem{thm}{Theorem}[section]

\newtheorem{lem}[thm]{Lemma}
\newtheorem{prop}[thm]{Proposition}
\newtheorem{propq}[thm]{``Proposition''}
\newtheorem{conj}[thm]{Conjecture}

\def\@rst #1 #2other{#1}
\newcommand\MR[1]{\relax\ifhmode\unskip\spacefactor3000 \space\fi
  \MRhref{\expandafter\@rst #1 other}{#1}}
\newcommand{\MRhref}[2]{\href{http://www.ams.org/mathscinet-getitem?mr=#1}{MR#2}}

\theoremstyle{definition}
\newtheorem{defn}[thm]{Definition}
\newtheorem{remark}[thm]{Remark}

\newtheorem{prob}[thm]{Problem}

\numberwithin{equation}{section}

\newcommand{\dsb}{\begin{adjustwidth}{2.5em}{0pt}
\begin{footnotesize}}
\newcommand{\dse}{\end{footnotesize}
\end{adjustwidth}}

\newcommand{\ssb}{\begin{adjustwidth}{2.5em}{0pt}}
\newcommand{\sse}{\end{adjustwidth}}

\newcommand{\aryb}{\begin{eqnarray*}}
\newcommand{\arye}{\end{eqnarray*}}
\def\alb#1\ale{\begin{align*}#1\end{align*}}
\def\allb#1\alle{\begin{align}#1\end{align}}
\newcommand{\eqb}{\begin{equation}}
\newcommand{\eqe}{\end{equation}}
\newcommand{\eqbn}{\begin{equation*}}
\newcommand{\eqen}{\end{equation*}}

\newcommand{\BB}{\mathbbm}
\newcommand{\ol}{\overline}

\newcommand{\op}{\operatorname}

\newcommand{\eqD}{\overset{d}{=}}
\newcommand{\ep}{\varepsilon}

\newcommand{\wt}{\widetilde}
\newcommand{\wh}{\widehat}
\newcommand{\mcl}{\mathcal}

\newcommand{\bdy}{\partial}

\newcommand{\cc}{{\mathbf{c}}}
\newcommand{\ccL}{{\mathbf{c}_{\mathrm L}}}
\newcommand{\ccM}{{\mathbf{c}_{\mathrm M}}}


\let\originalleft\left
\let\originalright\right
\renewcommand{\left}{\mathopen{}\mathclose\bgroup\originalleft}
\renewcommand{\right}{\aftergroup\egroup\originalright}

\title{Supercritical Liouville quantum gravity and CLE$_4$}
 \date{ }
 \author{
\begin{tabular}{c} Morris Ang\\[-5pt]\small UC San Diego \end{tabular}
\begin{tabular}{c} Ewain Gwynne\\[-5pt]\small University of Chicago \end{tabular}  
}

\begin{document}

\maketitle

\begin{abstract}
We establish the first relationship between Schramm-Loewner evolution (SLE) and Liouville quantum gravity (LQG) in the supercritical (a.k.a.\ strongly coupled) phase, which corresponds to central charge values $\mathbf c_{\mathrm L} \in (1,25)$ or equivalently to complex values of $\gamma$ with $|\gamma|=2$.
More precisely, we introduce a canonical supercritical LQG surface with the topology of the disk.
We then show that for each $\mathbf c_{\mathrm L} \in (1,25)$ there is a coupling of this LQG surface with a conformal loop ensemble with parameter $\kappa=4$ (CLE$_4$) wherein the LQG surfaces parametrized by the regions enclosed by the CLE$_4$ loops are conditionally independent supercritical LQG disks given their boundary lengths. 
In this coupling, the CLE$_4$ is neither determined by nor independent from the LQG.
Guided by our coupling result, we exhibit a combinatorially natural family of loop-decorated random planar maps whose scaling limit we conjecture to be the supercritical LQG disk coupled to CLE$_4$. 
We include a substantial list of open problems. 
\end{abstract}

 

\tableofcontents

\bigskip
\noindent\textbf{Acknowledgments.}
We thank Nina Holden and Minjae Park for helpful comments on an earlier version of this article. 
This work has benefited from enlightening discussions with many people, including Bruno Balthazar, Manan Bhatia, Ahmed Bou-Rabee, J\'er\'emie Bouttier, William Da Silva, Jian Ding, Yuyang Feng, Nina Holden, Jiaqi Liu, Minjae Park, Josh Pfeffer, Guillaume Remy, Scott Sheffield, Xin Sun, and Jinwoo Sung. 
M.A.\ was supported by the Simons Foundation as a Junior Fellow at the Simons Society of Fellows. 
E.G.\ was partially supported by a Clay research fellowship and by NSF grant DMS-2245832. 
\medskip

\section{Introduction}
\label{sec-intro}

Liouville quantum gravity (LQG) is a canonical one-parameter family of models of random geometry in two dimensions. LQG was first studied in the physics literature in the 1980s~\cite{polyakov-qg1,david-conformal-gauge,dk-qg} in the setting of string theory and two-dimensional gravity (see Remark~\ref{remark-string}), and has been an extremely active area of research in mathematics for the past fifteen years. 
Some common choices of the parameter for LQG are the \textbf{(Liouville) central charge}\footnote{Some works on LQG also consider the \textbf{matter central charge}, which satisfies $\ccM =26-\ccL = 25-6Q^2$.}
 $\ccL > 1$, the \textbf{background charge} $Q > 0$, or the \textbf{coupling constant} $\gamma \in (0,2] \cup \{z\in\BB C : |z| =2\}$, which are related by the formulas
\eqb \label{eqn-lqg-parameters}
\ccL = 1+6Q^2 ,\quad Q = \frac{2}{\gamma}  +\frac{\gamma}{2} .
\eqe

One can consider LQG surfaces with the topology of any orientable surface.
In this paper we will consider LQG surfaces with the topology of the unit disk $\BB D$. 
Heuristically speaking, an LQG surface with central charge $\ccL$ is described by a random Riemannian metric tensor $g$ on $\BB D$ which is sampled from the ``uniform measure on Riemannian metric tensors, weighted by $(\det\Delta_g)^{-(26-\ccL)/2}$'', where $\Delta_g$ is the Laplace-Beltrami operator. This definition does not make literal sense, but LQG surfaces can be defined rigorously in various ways, as we will discuss below. 
 
\begin{defn} \label{def-phases}
We refer to the case when $\ccL > 25$ (equivalently $Q> 2$ or $\gamma\in (0,2)$) as the \textbf{subcritical} or \textbf{weakly coupled} phase. 
We refer to the case when $\ccL = 25$ ($Q=\gamma=2$) as the \textbf{critical} case.
We refer to the case when $\ccL \in (1,25)$ ($Q\in (0,2)$ or $\gamma \in \BB C$ with $|\gamma|=2$) as the \textbf{supercritical} or \textbf{strongly coupled} phase.
\end{defn}

Most works on LQG consider only the subcritical and critical cases. In these cases, the DDK ansatz~\cite{david-conformal-gauge,dk-qg} implies that the Riemannian metric tensor associated with LQG takes the form
\eqb \label{eqn-lqg-metric-tensor}
g = e^{\gamma \Phi} \,(dx^2 + dy^2) 
\eqe 
where $\Phi$ is a random generalized function on $\BB D$ which locally looks like the the Gaussian free field (GFF).
We assume that the reader is familiar with the GFF; the unfamiliar reader can consult, e.g.,~\cite{shef-gff,pw-gff-notes,bp-lqg-notes}.  

The metric tensor~\eqref{eqn-lqg-metric-tensor} still does not make rigorous sense, but one can nevertheless define various objects associated with it by approximating $\Phi$ by continuous functions. 
Moreover, following~\cite{shef-kpz,shef-zipper,wedges}, one can define LQG surfaces rigorously as domains decorated by random generalized functions, viewed modulo a conformal change of coordinates rule (see Definition~\ref{def-lqg-surface} below).
There is a vast literature on critical and subcritical LQG and its connections to various other mathematical objects, including SLE, random planar maps, conformal field theory, and random permutations. See~\cite{bp-lqg-notes,gwynne-ams-survey,sheffield-icm} for some introductory expository articles on critical and subcritical LQG.

In this paper, we will be primarily interested in the supercritical phase $\ccL \in (1,25)$.  
In this phase, $\gamma$ is complex so the metric tensor~\eqref{eqn-lqg-metric-tensor} is not defined even heuristically. 
In part for this reason, the supercritical phase is much more mysterious than the subcritical phase, even at a physics level of rigor.  
Nevertheless, supercritical LQG is potentially more interesting than subcritical LQG from the perspective of string theory and Yang-Mills theory, as we discuss in Remark~\ref{remark-string} below. 
Moreover, a number of works have studied supercritical LQG or Liouville CFT with $\ccL \in (1,25)$ from a non-probabilistic perspective, see, e.g.,~\cite{ambjorn-remarks,bh-c-ge1-matrix,david-c>1-barrier,fkv-c>1-I,fk-c>1-II,gervais-weak-to-strong,gn-locality-string-models,bg-new-critical-dim,suzuki-note,verlinde-quantization,seiberg-notes,ribault-cft}.
See~\cite[Section 2]{ghpr-central-charge} for additional discussion and references on supercritical LQG. 

The definition of LQG surfaces as domains decorated by generalized functions, viewed modulo conformal coordinate change, still makes sense when $\ccL \in (1,25)$ (see Definition~\ref{def-lqg-surface}).
Prior to this paper, the main achievement in the probabilistic study of supercritical LQG was the construction of the supercritical LQG metric (distance function)~\cite{dg-supercritical-lfpp,dg-uniqueness,pfeffer-supercritical-lqg} for all $\ccL\in (1,25)$, which extends the subcritical LQG metric constructed in~\cite{dddf-lfpp,gm-uniqueness}.
The supercritical LQG metric is a random metric on a domain $U\subset\BB C$, which is given by a measurable function of a GFF-like random generalized function on $U$.
Unlike in the subcritical and critical~\cite{dg-critical-lqg} cases, the supercritical LQG metric does not induce the Euclidean topology on $U$. Rather, there is an uncountable, but zero Lebesgue measure, set of \textbf{singular points} in $U$ which lie at infinite LQG distance from every other point in $U$. See~\cite{dg-uniqueness} for details. Roughly speaking, singular points correspond to $\alpha$-thick points of the field $\Phi$ for $\alpha > Q$~\cite{pfeffer-supercritical-lqg}. The existence of singular points is related to the presence of tachyonic operators in Liouville conformal field theory for $\ccL \in (1,25)$, which give rise to infinite-diameter ``holes'' in the surface~\cite[Section 5.5]{seiberg-notes}.

In light of the construction of the supercritical LQG metric, it is natural to ask what other features of subcritical LQG can be  extended to the supercritical case. For example: 
\begin{enumerate}
\item Connections between LQG and Schramm-Loewner evolution (SLE), such as the \textbf{quantum zipper}~\cite{shef-zipper} and \textbf{mating of trees}~\cite{wedges}  (see~\cite{ghs-mating-survey} for a survey). Roughly speaking, these connections take the following form. Suppose we have a certain type LQG surface of central charge $\ccL \geq 25$ and a certain type of SLE$_\kappa$ curve, sampled independently from each other, whose parameters satisfy 
\eqb \label{eqn-matched}
\kappa \in \left\{\gamma^2 ,\frac{16}{\gamma^2} \right\} , \quad\text{equivalently} \quad   26-\ccL = \cc(\kappa) := 1 - 6\left(\frac{2}{\sqrt\kappa} - \frac{\sqrt\kappa}{2}\right)^2  .
\eqe
Then the LQG surfaces parametrized by the complementary connected components of the SLE$_\kappa$ curve are conditionally independent given the LQG lengths of their boundaries, and their laws can be described explicitly. \label{item-sle}
\item The probabilistic formulation of LQG in terms of conformal field theory~\cite{dkrv-lqg-sphere,hrv-disk,grv-higher-genus} and various exact formulas which come from this formulation (e.g.,~\cite{krv-dozz,gkrv-bootstrap,arsz-structure-constants}). \label{item-cft} 
\item Connections between LQG and random planar maps, as surveyed, e.g., in~\cite{ghs-mating-survey}. \label{item-rpm}
\item The LQG area and length measures, which heuristically speaking are the Riemannian length and area measures associated with~\eqref{eqn-lqg-metric-tensor}~\cite{rhodes-vargas-log-kpz,shef-kpz} and which can be constructed via Gaussian multiplicative chaos~\cite{kahane,rhodes-vargas-review,berestycki-gmt-elementary}. \label{item-measure}  
\end{enumerate}
Prior to this work, very little was known about analogs of any of these results in the supercritical case. 
See, e.g., the lists of open problems in~\cite{ghpr-central-charge,apps-central-charge} for further discussion.

\subsection{Main result: coupling of supercritical LQG and CLE$_4$}

The main result of this paper is an extension of the connection between LQG and SLE (Item~\ref{item-sle}) to the supercritical case. In particular, for each $\ccL \in (1,25)$ and each $L > 0$ we introduce the supercritical LQG disk with central charge $\ccL$ and boundary length $L$: this is a canonical central charge-$\ccL$ LQG surface described by a GFF-like random generalized function $\Phi$ on $\BB D$ (Definition~\ref{def-supercritical-disk}). 
We then prove, roughly speaking, the following (see Theorem~\ref{thm-cle4-coupling} for a precise statement).

\begin{thm}[Main result, informal statement] \label{thm-main-intro}
Let $\ccL \in (1,25)$ and let $\Phi$ be the field corresponding to a unit boundary length LQG disk of central charge $\ccL$. 
There is a coupling of $\Phi$ with a (non-nested) conformal loop ensemble (CLE) with parameter $\kappa=4$ in $\BB D$ with the following property. The central charge-$\ccL$ LQG surfaces obtained by restricting $\Phi$ to the regions enclosed by the CLE$_4$ loops (see Definition~\ref{def-lqg-surface}) are conditionally independent supercritical LQG disks given their boundary lengths. In this coupling, the CLE$_4$ is neither independent from nor determined by $\Phi$. 
\end{thm}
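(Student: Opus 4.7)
The plan is to combine three ingredients: (i) the CLE$_4$--GFF local-set coupling of Miller--Sheffield (and Werner), (ii) the spatial Markov decomposition of the GFF across this local set, and (iii) the definition of the supercritical LQG disk as a GFF-type field conditioned on a prescribed critical Gaussian multiplicative chaos (GMC) boundary length.

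First I would unpack Definition~\ref{def-supercritical-disk}: the supercritical LQG disk of boundary length $L$ is described by a field $\Phi$ on $\BB D$ which, away from marked points, is a variant of the GFF plus explicit deterministic corrections tied to the background charge $Q$. Because $|\gamma|=2$ throughout the supercritical regime, the natural boundary length of a Jordan curve in $\ol{\BB D}$ is given by critical ($\gamma=2$) GMC against $\Phi$; this GMC is intrinsic to the underlying GFF and is well-defined without requiring $\gamma$ to be real, which is what lets us condition on a prescribed boundary length $L$.

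Next I would invoke the Miller--Sheffield coupling: for suitable boundary data, the GFF on $\BB D$ admits a coupling with a non-nested CLE$_4$ such that the CLE$_4$ is a local set, and inside each loop $\mathcal L$ the field decomposes as a constant shift $\pm 2\lambda$ (with i.i.d.\ uniform signs given the loop configuration) plus an independent zero-boundary GFF $h_{\mathcal L}$. The deterministic terms distinguishing $\Phi$ from a plain GFF are harmonic inside each loop and so are absorbed into the harmonic-extension piece of the Markov decomposition. Conformally mapping each loop region to $\BB D$ and applying the LQG change-of-coordinates rule $\wt\Phi = \Phi\circ\psi + Q\log|\psi'|$ then produces a candidate supercritical LQG disk field on each component, with all the boundary log-singularities arising naturally from the $Q\log|\psi'|$ term.

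The main step, which I expect to be the principal obstacle, is verifying that these conformally transformed restricted fields, after conditioning on their individual critical-GMC boundary lengths, are exactly distributed as supercritical LQG disks of those lengths. I would carry this out via a disintegration argument: compute the joint law of the tuple of boundary lengths together with the normalized loop-fields, and compare with the definition of the supercritical LQG disk. Conditional independence across loops then follows from the mutual independence of the $h_{\mathcal L}$'s given the CLE$_4$ and the signs. The delicate points are (a) disintegrating with respect to critical-GMC boundary lengths and matching normalization constants, and (b) tracking how the random $\pm 2\lambda$ shifts affect each loop's critical-GMC boundary length. Point (b) is also what will explain the final assertion of the theorem: the loop boundary lengths are measurable functions of $\Phi$ (so CLE$_4$ is not independent of $\Phi$), yet the $\pm$ signs remain random given $\Phi$ (so CLE$_4$ is not determined by $\Phi$).
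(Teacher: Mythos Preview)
Your proposal misses the structural feature of Definition~\ref{def-supercritical-disk} that makes the argument work. The supercritical disk field is not simply ``a variant of the GFF plus explicit deterministic corrections''; it is explicitly defined as the linear combination $\Phi = \tfrac{Q}{2}\Phi_2 + \tfrac{\sqrt{4-Q^2}}{2}\Psi$ of a \emph{critical} LQG disk field $\Phi_2$ and an \emph{independent} zero-boundary GFF $\Psi$. In the paper's coupling the CLE$_4$ is taken to be the level-line CLE$_4$ of $\Psi$, not of $\Phi$ (nor of the zero-boundary part of $\Phi$). This is essential: (a) $\Psi$ has zero boundary data, so the Miller--Sheffield CLE$_4$ picture applies directly to it; and (b) since $\Gamma$ is then independent of $\Phi_2$, the already-known \emph{critical} LQG/CLE$_4$ theorem (Theorem~\ref{thm-subcritical-coupling} with $\gamma=2$, i.e.\ Lemma~\ref{lem-cle4-coupling-critical}) yields that the restrictions of $\Phi_2$ to the loop regions are conditionally independent critical LQG disks. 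Combining this with the Markov decomposition of $\Psi$ across its own level loops, the coordinate-change computation~\eqref{eqn-coupling-coord} shows that each restricted $\Phi$ is again of the form $\tfrac{Q}{2}(\text{critical disk}) + \tfrac{\sqrt{4-Q^2}}{2}(\text{zero-boundary GFF})$, hence a supercritical disk by definition. No disintegration against critical-GMC lengths is needed.

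Your plan, by contrast, couples CLE$_4$ to (the GFF part of) $\Phi$ itself. If you write $\Phi = \Phi^0 + h$ with $\Phi^0$ a zero-boundary GFF and $h$ harmonic, then $\Phi^0 = \tfrac{Q}{2}\Phi_2^0 + \tfrac{\sqrt{4-Q^2}}{2}\Psi$, which is \emph{not} $\Psi$. The level-line CLE$_4$ of $\Phi^0$ is therefore \emph{not} independent of $\Phi_2$, so you cannot invoke the critical LQG/CLE$_4$ theorem, and there is no mechanism by which the restricted fields inherit the specific two-field structure that \emph{defines} a supercritical disk. Your ``main step'' would thus require proving from scratch an identity of laws that the paper obtains in one line by reducing to the known critical case. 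Your argument for the non-triviality is also off: the paper shows the signs $X_\ell$ are \emph{determined} by $(\Phi,\Gamma)$ via the inner/outer length discrepancy (Proposition~\ref{prop-gap}), and uses this to deduce that if $\Gamma$ were $\Phi$-measurable then so would $\Psi$, a contradiction; non-independence is proven by a variance computation for the harmonic extension of $\Phi$ near a loop (Proposition~\ref{prop-not-independent}), not by measurability of loop lengths.
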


\begin{figure}[ht!]
\begin{center} 
\includegraphics[width=0.5\textwidth]{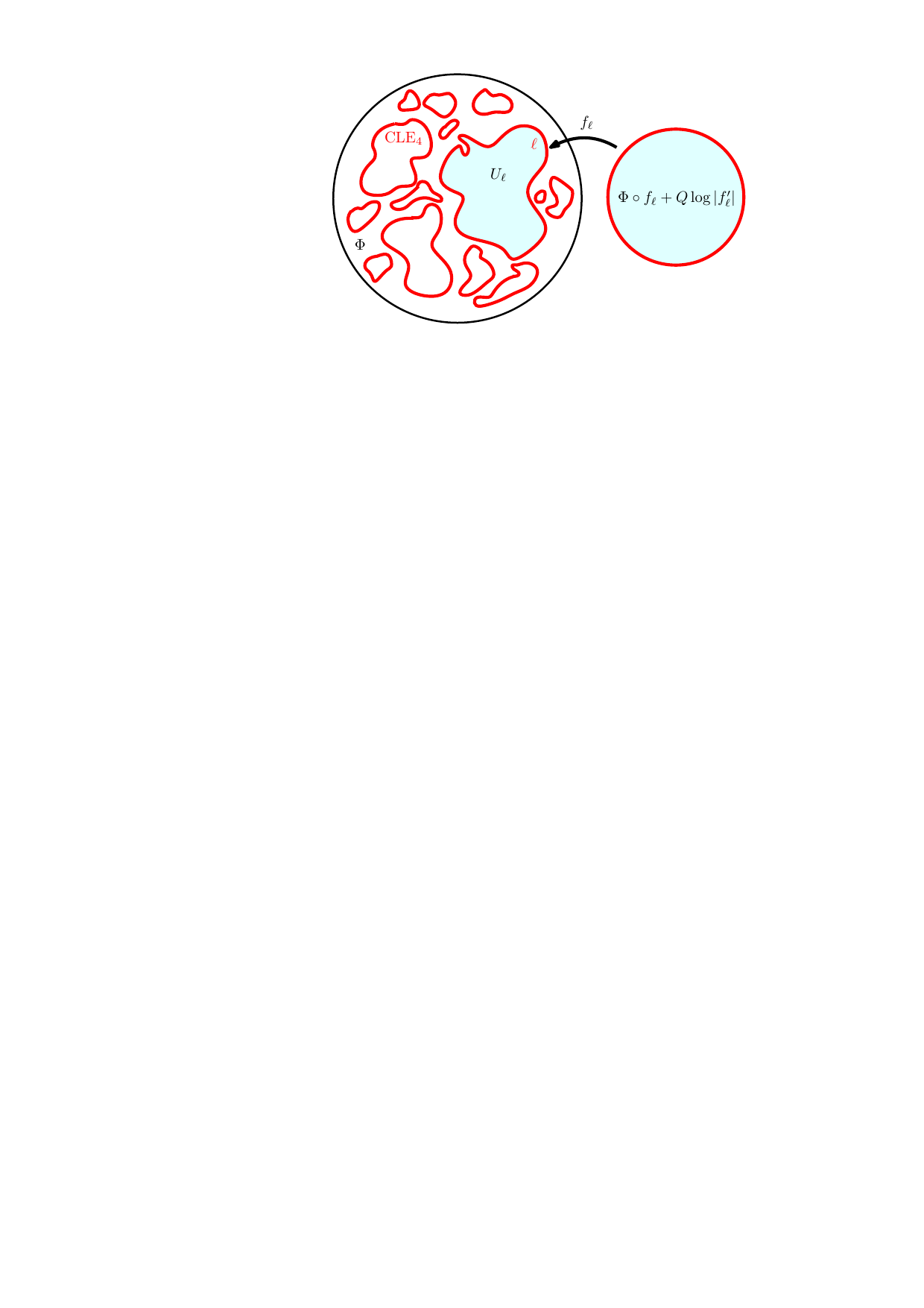}  
\caption{\label{fig-understanding-improved} 
Illustration of the statement of Theorem~\ref{thm-main-intro}. The precise version of the theorem (Theorem~\ref{thm-cle4-coupling}) says the following. Consider LQG surfaces obtained by restricting the field $\Phi$ to the regions $U_\ell$ enclosed by the CLE$_4$ loops, then viewing the pairs $(U_\ell,\Phi|_{U_\ell})$ modulo $Q$-LQG coordinate change, where $Q \in (0,2)$ is as in~\eqref{eqn-lqg-parameters} (see Definition~\ref{def-lqg-surface}). Then these LQG surfaces are conditionally independent supercritical LQG disks given their boundary lengths. 
}
\end{center}
\end{figure}

See Figure~\ref{fig-understanding-improved} for an illustration of the theorem statement. 
CLE$_\kappa$ for $\kappa\in (8/3,4)$ is a countable collection of disjoint loops which locally look like SLE$_\kappa$ curves, first introduced in~\cite{shef-cle}. 
A similar statement to Theorem~\ref{thm-main-intro}, for $\ccL \in (0,1)$, equivalently $\gamma \in (\sqrt{8/3},2)$, is proven as~\cite[Theorem 1.1]{msw-simple-cle-lqg}, and is extended to the critical case $\ccL=25$ in~\cite{ag-critical-cle4}. We re-state this result as Theorem~\ref{thm-subcritical-coupling} below. In this case, instead of a CLE$_4$ coupled to the LQG disk, one has a CLE$_{\kappa=\gamma^2}$ sampled independently from the LQG disk.

We will actually prove a stronger statement than Theorem~\ref{thm-main-intro}, which gives a coupling of the supercritical LQG disk with a whole nested CLE$_4$ satisfying a certain Markovian property, see Definition~\ref{def-disk-cle4} and Theorem~\ref{thm-cle4-coupling-nested}. 

This coupling is explicit: the CLE$_4$ is viewed as the set of level lines of a GFF $\Psi$ (as in~\cite{shef-miller-cle4,asw-local-sets}) and $\Phi$ is given by a linear combination of $\Psi$ and the field associated with a critical ($\ccL=25$) LQG disk. This coupling has a number of interesting probabilistic and geometric features, see Sections~\ref{sec-cle4-coupling} and~\ref{sec-comments} for details. Perhaps surprisingly, the proof of the main part of the coupling theorem (given in Section~\ref{sec-cle4-coupling}) only takes a little over a page, once one has the necessary setup. 

We expect that our techniques can also lead to other couplings of supercritical LQG surfaces with SLE$_4$-type objects, e.g., single SLE$_4$-type curves or the special local sets of the GFF considered, e.g., in~\cite{aru-sepulveda-2valued,als-isomorphism} (see Section~\ref{sec-open-problems}). One type of coupling of SLE and LQG which we do not know how to generalize to the supercritical case is the mating of trees theorem~\cite{wedges}. The reason for this is that we do not have a locally finite area measure associated with supercritical LQG, so we cannot parametrize a space-filling curve by supercritical LQG area.

A key idea in this paper is to take orthogonal linear combinations of independent fields (generalized functions) to get a new collection of fields with the same total central charge (see Appendix~\ref{sec-disk-rotate} for a general statement). This idea also plays a central role in the paper~\cite{ag-mismatched}. We expect that this idea will have more applications in the future. 

\begin{remark} 
Prior to this work, it was not clear what was the correct way to generalize the relationships between SLE and LQG to the case when $\ccL\in (1,25)$, or indeed whether any such generalization should exist. 
A natural way to go about finding such a generalization is to first look for an appropriate generalization of SLE with central charge in $(1,25)$, then sample such a generalized SLE independently from the supercritical LQG. 
Some potential candidates for ``SLE with central charge in $(1,25)$'' are Loewner evolution driven by complex-valued Brownian motion~\cite{gp-complex-sle} or various objects associated with LQG with central charge $26-\ccL$ (motivated by~\eqref{eqn-matched}). 
It is still unclear whether any of these objects have a nice relationship to LQG with central charge $\ccL$. 
However, as we show in this paper, instead of generalizing SLE one can continue to look at an SLE$_4$-type object (as in the critical case $\ccL = 25$) and change the way that this SLE is coupled to LQG.
\end{remark}

\subsection{Applications and additional perspectives}

The relationships between SLE and LQG in the subcritical and critical phases have a huge number of applications, to such topics as SLE and of LQG individually, conformal field theory (see, e.g.,~\cite{as-cle-integrability,arsz-structure-constants}), random planar maps (see, e.g.,~\cite{ghs-map-dist}), random permutations (see, e.g.,~\cite{borga-skew-permuton}), and the moduli of random surfaces~\cite{ars-annulus}. Many of these applications are surveyed in~\cite{ghs-mating-survey}.

Likewise, Theorem~\ref{thm-main-intro} opens the door to a much deeper understanding of supercritical LQG and its connections to other objects. 
For example, our result provides new insights regarding extensions to the supercritical phase of the other three items above:
\begin{enumerate}
\setcounter{enumi}{1}
\item Our supercritical LQG disk can (at least heuristically) be described in terms of a version of the Liouville action (Section~\ref{sec-action}). Additionally, correlation functions for the supercritical LQG disk are likely to be explicitly computable, see Problem~\ref{prob-critical-corr} and the surrounding discussion. 
\item We give the first scaling limit conjecture for a class of combinatorially natural random planar maps (i.e., one not defined in terms of continuum objects) toward supercritical LQG (Section~\ref{sec-rpm-supercritical}). The justification for the conjecture is that the random planar maps exhibit discrete analogs of various properties of our coupling of supercritical LQG with CLE$_4$. We expect that similar ideas can also lead to connections between other types of random planar maps and supercritical LQG, see Remark~\ref{remark-rpm}.
\item Our LQG disk with central charge $\ccL\in(1,25)$ admits a canonical finite length measure on $\bdy \BB D$. This length measure is also defined on the CLE$_4$ loops in the coupling of Theorem~\ref{thm-cle4-coupling}, although the measures on the inside and outside of the loops do not agree (see Section~\ref{sec-gap}). The joint law of the supercritical LQG lengths of the CLE$_4$ loops has an explicit description, see Section~\ref{sec-boundary-law}.
\end{enumerate}
The recent paper~\cite{bgs-supercritical-crt} builds on the present paper to prove several fundamental results about supercritical LQG. 
\begin{itemize}
\item \cite[Theorem 1.5]{bgs-supercritical-crt} shows that there is no reasonable notion of area measure associated with supercritical LQG (in contrast to the subcritical case). The proof is based on the coupling of supercritical LQG and CLE$_4$ introduced in the present paper.
\item Various works from physics, dating back to the 1980s, predict that supercritical LQG surfaces should behave like ``branched polymers'', i.e., they should look like the continuum random tree~\cite{aldous-crt1}. See~\cite[Section 1.1]{bgs-supercritical-crt} or~\cite[Section 2.2]{ghpr-central-charge} for discussion and references related to this prediction. \cite[Theorem 1.4]{bgs-supercritical-crt} is the first rigorous result which reconciles the ``branched polymer'' prediction with the description of supercritical LQG from recent mathematical literature (as discussed above). More precisely, this theorem shows that, for the random planar map models of supercritical LQG introduced in this paper, if one conditions on the rare event that the map is finite, then the scaling limit of the map is the continuum random tree.
\end{itemize}

We discuss some additional potential applications and extensions of the results of the present paper in Section \ref{sec-open-problems}.

\begin{remark}[Physics motivation] \label{remark-string} 
Polyakov's motivation for introducing LQG in~\cite{polyakov-qg1} comes from bosonic string theory. 
In this theory, a string is a loop (continuous image of the circle) in $\BB R^d$, $d \in \BB N_0$, which evolves in time. 
A string can therefore be viewed as a continuous function from a two-dimensional parameter space into $\BB R^d$, with the two parameters corresponding to the parametrization of the loop and time. 
Equivalently, we can think of a string as a surface together with a function from the surface into $\BB R^d$. This object is sometimes called a \textbf{worldsheet}. 

To analyze such objects, Polyakov sought to develop a theory of ``integrating over all possible surfaces in $\BB R^d$'', analogous to the Feynman path integral, which is an integral over all possible paths. 
To make sense of this, one needs a measure on the set of pairs consisting of a surface and a function from the surface into $\BB R^d$. 
Polyakov argued that the right measure to put on such surface/function pairs is the one where the surface is LQG with central charge $\ccL = 26-d$, and the ``function'' (actually a generalized function) is given by $d$ independent Gaussian free fields (GFFs) on the surface. The weighting $(\det \Delta_g)^{-(26-\ccL)/2}$ in the heuristic definition of LQG can be viewed as the partition function of $26-\ccL$ independent GFFs. 

If $d\in\BB N_0$, we have that $\ccL$ is in the subcritical / critical phase $\ccL \geq 25$ if and only if $d \in \{0,1\}$. For $d\in \{2,\dots,24\}$, instead $\ccL \in (1,25)$ is in the supercritical phase. 
 This suggests that the supercritical phase of LQG should be connected to $d$-dimensional string theory for $d \in \{2,\dots,24\}$. 
Moreover, one would like to understand supercritical LQG of central charge $\ccL \in \{1,\dots,24\}$ together with $d = 26-\ccL$ independent GFFs. 
We do not prove any results about this particular combination of objects in the present paper.
See, e.g.,~\cite{dgz-2d-gravity-matrix,gm-2d-gravity-string} for expository articles covering the relations between LQG and string theory from a physics perspective. 

It is also possible to express Wilson loop observables in quantum Yang-Mills theory in terms of ``sums over all possible surfaces in $\BB R^d$''. See, e.g.,~\cite{chatterjee-lattice-gauge,mp-matrix-group,cps-random-surface-ym} and the references therein for some recent rigorous results in this direction for lattice Yang-Mills theory. It is therefore conceivable, but still far from rigorously established, that there could be connections between Yang-Mills theory and supercritical LQG. See the open problems in~\cite{cps-random-surface-ym} for further discussion.
\end{remark}

\section{The supercritical LQG disk and its coupling to CLE$_4$}
\label{sec-main}

In Section~\ref{sec-lqg-surface}, we review the definitions of LQG surfaces (Definition~\ref{def-lqg-surface}) and the critical LQG disk (Definition~\ref{def-critical-disk}).
In Section~\ref{sec-supercritical-disk}, we state our definition of the supercritical LQG disk (Definition~\ref{def-supercritical-disk}).
In Section~\ref{sec-cle}, we review some properties of conformal loop ensembles and state a precise version of Theorem~\ref{thm-main-intro} (Theorem~\ref{thm-cle4-coupling}). 
In Section~\ref{sec-cle4-coupling}, we define a coupling of the supercritical LQG disk with \emph{nested} CLE$_4$ (Definition~\ref{def-disk-cle4}) and prove that it satisfies a Markov property which is stronger than the property described in Theorem~\ref{thm-main-intro}. 
In Section~\ref{sec-comments}, we make some observations about our coupling and its relationship to other objects.
In Section~\ref{sec-non-trivial}, we prove that in our coupling the CLE$_4$ is neither independent from nor determined by the supercritical LQG disk.

\subsection{LQG surfaces}
\label{sec-lqg-surface}

In many results concerning LQG, including couplings to SLE, connections to random planar maps, and exact formulas in the setting of conformal field theory, one has to work with a certain canonical LQG surface with a given topology. 
In the subcritical and critical cases, such canonical LQG surfaces are defined, e.g., in~\cite{wedges,dkrv-lqg-sphere,hrv-disk,grv-higher-genus}. In this subsection we define a canonical LQG surface with the disk topology in the supercritical case $\ccL \in (1,25)$. The reader who just wants to see the definition, without the background and motivation, can skip to Definition~\ref{def-supercritical-disk}. 

We first recall the definition of LQG surfaces from~\cite{shef-kpz,shef-zipper,wedges}, which is usually stated only for $\ccL \geq 25$ but also makes perfect sense for all $\ccL  > 1$.

\begin{defn} \label{def-lqg-surface}
Let $\ccL  > 1$ and let $Q = \sqrt{\frac{\ccL-1}{6}}>0$ be the corresponding background charge. 
Let $k\geq 0$ and consider the set of $k+2$-tuples $(U,\Phi,x_1,\dots,x_k)$, where $U\subset \BB C$ is open, $\Phi$ is a generalized function on $U$ (which we will always take to be random, and in particular to be some variant of the GFF), and $x_1,\dots,x_k\in U\cup \bdy U$. We define an equivalence relation ${\sim_Q}$ on such $k+2$-tuples by
\eqb \label{eqn-lqg-equiv}
(U,\Phi,x_1,\dots,x_k) \sim_Q (\wt U,\wt\Phi, \wt x_1,\dots,\wt x_k)
\eqe 
if there is a conformal map $f : \wt U \to U$ such that 
\eqb \label{eqn-lqg-coord}
\wt\Phi = \Phi\circ f +Q\log|f'| \quad \text{and} \quad f(\wt x_j) = x_j,\quad\forall j=1,\dots,k .
\eqe  
A \textbf{Liouville quantum gravity (LQG) surface} of central charge $\ccL$, with $k$ marked points, is an equivalence class under ${\sim_Q}$.
If $(U,\Phi,x_1,\dots,x_k)$ is an equivalence class representative of an LQG surface $\mcl S$, we refer to $\Phi$ as an \textbf{embedding} of $\mcl S$ into $(U,x_1,\dots,x_k)$. 
\end{defn}

We think of two equivalent $k+2$-tuples as in~\eqref{eqn-lqg-equiv} as two different parametrizations of the same LQG surface.
Objects associated with LQG are required to be compatible with the LQG coordinate change formula. For example, if $\ccL \geq 25$ and if $\mu_\Phi = ``e^{\gamma \Phi(z)} \,d^2 z''$ is the LQG area measure (defined via Gaussian multiplicative chaos), then whenever $f , \Phi$, and $\wt\Phi$ are as in~\eqref{eqn-lqg-coord}, one has $\mu_\Phi = f_* \mu_{\wt\Phi}$~\cite{shef-kpz,shef-wang-lqg-coord} (see~\cite[Theorem 13]{shef-renormalization} for the critical case).
The analogous statement also holds for LQG boundary length measures.

We henceforth assume that the reader is familiar with the free boundary GFF (a.k.a.\ the GFF with Neumann boundary conditions). This is a Gaussian random generalized function on a domain in $\BB C$, viewed modulo additive constant. The covariance kernel depends on the choice of additive constant, but one possible choice of covariance kernel for the free boundary GFF on $\BB D$ is 
\eqb \label{eqn-gff-cov}
\log\frac{1}{|z-w| |1-z\ol w|} , \quad z,w \in \BB D ,\quad z\not= w. 
\eqe 
See, e.g.,~\cite[Chapter 5]{bp-lqg-notes} for background on the free boundary GFF.  

Our definition of the supercritical LQG disk is based on the definition of the critical LQG disk, which we now recall. 
It is more convenient to state the definition when the LQG surface is parametrized by the infinite strip, rather than the disk. 

\begin{defn} \label{def-strip-decomp}
Let 
\eqbn
\mcl S = \BB R \times (0,2\pi)
\eqen
and let $\wt\Phi$ be a free-boundary GFF on $\mcl S$ (with any choice of additive constant). 
For $u \in \BB R$, we define $\wt\Phi^| : \mcl S \to \BB R$ to be the random function which, on each vertical segment $\{x\}\times (0,2\pi)$, is identically equal to the average of $\wt\Phi$ on this segment.
We define the \textbf{lateral part} of $\wt\Phi$ by $\wt\Phi^\dagger = \wt\Phi - \wt\Phi^|$. 
\end{defn}

We note that by~\cite[Lemma 4.2]{wedges}, the random function $\wt\Phi^|$ (viewed modulo additive constant) and the random generalized function $\wt\Phi^\dagger$ are independent. We also note that $\wt\Phi^\dagger$ does not depend on the choice of additive constant for $\wt\Phi$. 
The following definition of the critical LQG disk is taken from~\cite[Definition 4.3]{ahps-critical-mating}. 

\begin{defn}[Critical LQG disk] \label{def-critical-disk}
Let $\mcl B :\BB R \to (-\infty,0]$ be a random function such that $\{\mcl B_s\}_{s\geq 0}$ and $\{\mcl B_{-s}\}_{s\geq 0}$ are independent and each has the law of $(-\sqrt 2)$ times a three-dimensional Bessel process started from 0. Equivalently, $\mcl B$ is $\sqrt 2$ times a two-sided Brownian motion with $\mcl B_0 = 0$, conditioned to be negative.
With $\mcl S$ as in Definition~\ref{def-strip-decomp}, let $\Phi_0^|  : \mcl S \to \BB R$ be the random generalized function which is identically equal to $\mcl B_x$ on each vertical segment $\{x\}\times (0,2\pi)$. 
Also let $\wt\Phi^\dagger$ be the lateral part of a free-boundary GFF on $\mcl S$, as in Definition~\ref{def-strip-decomp}, sampled independently from $\Phi_0^|$ and let 
\eqbn
\Phi_0 = \Phi_0^| + \wt\Phi^\dagger
\eqen
\begin{itemize}
\item Let $\nu_{\Phi_0} = ``e^{\Phi_0(z)} \, |dz|''$ be the $\gamma=2$-LQG boundary length measure associated with $\Phi_0$ and let 
\eqbn
\Phi := \Phi_0 - \log \nu_{\Phi_0}(\bdy \mcl S) . 
\eqen 
For $L > 0$, the \textbf{doubly marked critical ($\ccL =25$, $\gamma=2$) LQG disk with boundary length $L$} is the critical LQG surface $(\mcl S , \Phi + \log L , -\infty,+\infty) / {\sim_2}$.  
\item The \textbf{singly marked (resp.\ unmarked) critical LQG disk with boundary length $L$} is the critical LQG surface obtained by forgetting one (resp.\ both) marked points. 
\end{itemize}
\end{defn}

\begin{defn}[Standard embedding] \label{def-circle-avg}
The random generalized function $\Phi$ of Definition~\ref{def-critical-disk} is called the \textbf{standard embedding} of the doubly marked critical LQG disk into $(\mcl S , -\infty,\infty)$. It is characterized by the condition that $\Phi^|(x)$ attains its maximum value at $x=0$. The \textbf{standard embedding} of the doubly marked LQG disk into $(\BB D , -1 , 1)$ is the random generalized function $\Phi\circ f + 2\log|f'|$, where $f : \BB D\to\mcl S$ is the conformal map taking $\pm 1$ to $\pm\infty$ and $-i$ to $0$. A \textbf{standard embedding} of a singly marked on unmarked critical LQG disk is the field obtained from the standard embedding of a doubly marked critical LQG disk by forgetting one or both marked points.
\end{defn}

\begin{remark} \label{remark-subcritical-disk}
In the subcritical case $\ccL > 25$, the LQG disk is defined similarly except that we take $\mcl B$ to be a Brownian motion with negative drift conditioned to stay negative, instead of a 3d Bessel process, and we weight the law of $\Phi_0$ by $\nu_{\Phi_0}(\bdy\mcl S)^{4/\gamma^2-1}$, see~\cite[Section 4.5]{wedges}. There is also an alternative definition of the LQG disk given in~\cite{hrv-disk}, which is proven to be equivalent to the one in~\cite{wedges} in~\cite{cercle-quantum-disk} (see also~\cite{ahs-integrability} for an alternative proof).  
\end{remark}

One can produce a doubly marked critical LQG disk from an unmarked critical LQG disk by sampling the marked boundary points from the LQG measure, as follows.

\begin{lem} \label{lem-marked-pts}
The marked points of the doubly marked critical LQG disk of boundary length $L$ are uniformly random in the following sense. If $(\mcl S , \Phi)$ is an embedding of an unmarked critical LQG disk and conditional on $\Phi$ we sample $x,y\in \bdy \mcl S$ independently from the critical LQG length measure $\nu_\Phi$, then  $(\mcl S , \Phi,x,y) / {\sim_2}$ is a doubly marked critical LQG disk of boundary length $L$. 
\end{lem}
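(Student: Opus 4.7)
The plan is to reduce the lemma to a single-point re-rooting identity and then establish that identity via a Cameron-Martin / Girsanov computation in the strip parametrization of Definition~\ref{def-critical-disk}.

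First, I would observe that the lemma follows by iteration from the following one-point re-rooting claim: if $(\mcl S, \Phi, m_1)/{\sim_2}$ is a singly-marked critical LQG disk of boundary length $L$ and, conditionally on the embedding $\Phi$, we sample a point $m_2 \in \bdy \mcl S$ from the probability measure $\nu_\Phi/L$, then the doubly-marked surface $(\mcl S, \Phi, m_1, m_2)/{\sim_2}$ has the law of a doubly-marked critical LQG disk of length $L$. Marginalizing out $m_1$ on both sides of this identity gives the corresponding ``unmarked plus one sampled point equals singly-marked" statement, and composing the two applications (using the crucial fact that $\nu_\Phi$ depends only on the field, not on the marked points, so that the two sampled points $x$ and $y$ are conditionally iid given any embedding) yields the lemma.

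To prove the one-point identity I would work in the strip parametrization. Pick a conformal self-map $\phi$ of $\mcl S$ sending $m_1$ to $-\infty$ and $m_2$ to $+\infty$, and pull the field back via the LQG coordinate change to get $\wt\Phi := \Phi\circ\phi^{-1} + 2\log|(\phi^{-1})'|$. It then suffices to identify the law of $\wt\Phi$ modulo horizontal translations of $\mcl S$ (the conformal self-maps fixing $\pm\infty$) and compare it with the law of the standard embedding of a doubly-marked disk as in Definition~\ref{def-circle-avg}. The effect of sampling $m_2$ from $\nu_\Phi$ rather than from an embedding-independent measure is, heuristically, an $e^{\Phi(m_2)}$ weighting on the joint law of $(\Phi, m_2)$; for the lateral free-boundary GFF component $\wt\Phi^\dagger$ this is a standard Cameron-Martin shift by a Green's function with a pole at $m_2$, while for the vertical-average component $\Phi^|$ (a scaled two-sided three-dimensional Bessel process) the weighting amounts to an $h$-transform that, once combined with the logarithmic singularity from $2\log|(\phi^{-1})'|$ at $+\infty$, should exactly reproduce the two-sided 3d Bessel process law prescribed by Definition~\ref{def-critical-disk}.

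The main obstacle is the delicate nature of the critical ($\gamma = 2$) LQG boundary measure, which is defined via a Seneta-Heyde / derivative renormalization rather than a direct exponential GMC; this complicates the Cameron-Martin interpretation of the $e^{\Phi(m_2)}$ weighting, since the naive density of $\nu_\Phi$ against Lebesgue measure vanishes at criticality. A more robust alternative, which I would fall back on, is to deduce the statement from its subcritical analog ($\ccL > 25$), which is standard (see Remark~\ref{remark-subcritical-disk} and~\cite{wedges}), by taking a limit as $\ccL \searrow 25$ and appealing to the joint convergence of subcritical LQG disks together with their boundary length measures to the critical ones established in~\cite{ahps-critical-mating}.
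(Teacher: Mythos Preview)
Your fallback approach---deduce the critical statement from the subcritical analog in~\cite{wedges} (specifically Proposition~A.8 there) by taking the limit $\gamma \to 2^-$ via the convergence results of~\cite{ahps-critical-mating}---is exactly the paper's proof, which consists of those two citations and nothing more.

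Your primary route (reduce to a one-point re-rooting identity, then do a direct Cameron--Martin / $h$-transform computation in the strip) is a genuinely different approach. The reduction step is clean and correct. You are also right to flag the obstacle: at $\gamma=2$ the boundary measure is a derivative / Seneta--Heyde martingale rather than an honest exponential GMC, so the ``$e^{\Phi(m_2)}$ weighting'' heuristic does not literally apply and would require a separate limiting argument to justify---at which point you are essentially back to the $\gamma\to 2^-$ limit anyway. The paper simply skips the direct attempt and goes straight to the limit, which is the economical choice here; your one-point reduction and Girsanov sketch would be the natural proof in the subcritical regime (and indeed this is roughly how~\cite[Proposition~A.8]{wedges} is argued), but it buys nothing extra at criticality.
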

\begin{proof}
The analogous statement for a subcritical LQG disk is~\cite[Proposition A.8]{wedges}. The statement for the supercritical LQG disk follows from this and the convergence of subcritical LQG disks to supercritical LQG disks as $\gamma\to 2^-$~\cite[Lemma 4.5]{ahps-critical-mating}.  
\end{proof}

\subsection{The supercritical LQG disk}
\label{sec-supercritical-disk}

We now consider how to extend the definition of the critical unit boundary length LQG disk to the supercritical case.
It is immediate from Definition~\ref{def-critical-disk} that for $\ccL\geq 25$, there is an embedding $\Phi$ of the LQG disk of central charge $\ccL$ into $\BB D$ which locally looks like a free-boundary GFF in the following sense. The law of $\wt\Phi$ is absolutely continuous with respect to the law of $ \wt\Phi + f$, where $\wt\Phi$ is a zero-boundary GFF on $\BB D$ and $f$ is a random function on $\ol{\BB D}$ (not necessarily independent from $\wt\Phi$) which is continuous except possibly at finitely many points of $\bdy\BB D$. 

Almost surely, the free-boundary GFF $\wt\Phi$ has $\alpha$-thick points on $\bdy\BB D$ for any $\alpha \in [-2,2]$, but not for $|\alpha| > 2$ (this can be proven, e.g., via the same arguments as in~\cite{hmp-thick-pts}, which proves an analogous statement for thick points in the bulk). 
For $\ccL \in (1,25)$, we have $Q\in (0,2)$, and $\alpha$-thick points of the field $\wt\Phi$ for $\alpha>Q$ give rise to ``singular points'' for the supercritical LQG metric, i.e., points which lie at infinite distance from every other point (see~\cite{pfeffer-supercritical-lqg} for the case of interior thick points, we expect that similar statements hold for boundary thick points).  
This suggests that it is not possible to define a finite supercritical LQG boundary length measure associated with the free-boundary GFF. 
Hence, it is not obvious how to generalize the definitions of the unit boundary length LQG disk from~\cite{wedges,hrv-disk} to the case when $\ccL\in (1,25)$. 

One possible strategy is to start by modifying the free-boundary GFF $\wt\Phi$ so that it no longer has $\alpha$-thick points on the boundary for $\alpha>Q$. A very naive way to do this is as follows. We can write $\wt\Phi = \wt\Phi^0 + \phi$, where $\wt\Phi^0$ is a zero-boundary GFF on $\BB D$ and $\phi$ is a random harmonic function on $\BB D$ independent from $\wt\Phi^0$. 
The set of boundary thick points depends only on the function $\phi$, so $\wt\Phi^0 + (Q/2) \phi$ has boundary thick points for $\alpha \in [-Q,Q]$, but not for $|\alpha| > Q$, and it still has similar behavior to the free-boundary (or zero-boundary) GFF away from the boundary. Hence, it is reasonable to search for a supercritical analog of the LQG disk which locally behaves like $\wt\Phi^0  + (Q/2) \phi$ near the boundary. 

We note that if $\Psi$ is a zero-boundary GFF on $\BB D$ independent from $\wt\Phi$ and $a,b\in [-1,1]$ with $a^2+b^2=1$, then $a\wt\Phi^0 + b\Psi \eqD \wt\Phi^0$. Hence 
\eqbn
\wt\Phi^0 + \frac{Q}{2} \phi \eqD \frac{Q}{2} \wt\Phi  +  \frac{\sqrt{4-Q^2}}{2}  \Psi .
\eqen
This at least partially motivates the following definition. 

\begin{defn} \label{def-supercritical-disk}
Fix $Q\in (0,2)$ and let $\ccL = 1+6Q^2 \in (1,25)$ be the corresponding central charge. Let $L > 0$ and let $  \Phi_2 $ be the standard embedding into $\BB D$ of a doubly marked critical LQG disk with boundary length $L$ (Definitions~\ref{def-critical-disk} and~\ref{def-circle-avg}).
Let $\Psi$ be a zero-boundary GFF on $\BB D$, sampled independently from $ \Phi_2$, and let
\eqb \label{eqn-supercritical-disk}
\Phi := \frac{Q}{2} \Phi_2 + \frac{\sqrt{4-Q^2}}{2} \Psi .
\eqe 
The \textbf{(supercritical) LQG disk with central charge $\ccL$ and boundary length $L$} is the LQG surface $(\BB D,\Phi , -1,1) / {\sim_Q}$. We call the field $\Phi$ its \textbf{standard embedding}. The \textbf{singly marked} or \textbf{unmarked} LQG disk with central charge $\ccL$ and boundary length $L$ is obtained by forgetting one or both marked points.
\end{defn}

We use the standard embedding of $\Phi_2$ in Definition~\ref{def-supercritical-disk} only for convenience. 
Using the conformal invariance of the law of the GFF $\Psi$, one can check that the law of the LQG surface $(\BB D ,\Phi , -1,1) / {\sim_Q}$ does not depend on the particular choice of embedding $\Phi_2$. 

At first glance, our definition of the supercritical LQG disk may not look particularly natural. The same is true of the definitions of the subcritical or critical LQG disks. However, there are various ways to justify why the definitions are the correct ones, which apply in both the subcritical and supercritical cases. 
\begin{itemize}
\item The subcritical and critical LQG disks arises naturally in couplings of SLE with LQG~\cite{wedges,ahps-critical-mating}. Our main theorem (Theorem~\ref{thm-cle4-coupling}) says that the supercritical LQG disk also arises naturally in couplings with SLE.  
\item The subcritical LQG disk can be viewed as a sample from a probability measure on fields defined in terms of the Liouville action. This is rigorously justified in~\cite{hrv-disk}. We give a heuristic interpretation of the supercritical LQG disk in terms of a variant of the Liouville action in Section~\ref{sec-action}.  
\item The subcritical and critical LQG disks arise, at least conjecturally, as the scaling limit of various types of random planar maps. See, e.g.,~\cite{hrv-disk,ghs-mating-survey} for conjectures and~\cite{bet-mier-disk,lqg-tbm2,gms-tutte,hs-cardy-embedding} for rigorous results. In Section~\ref{sec-rpm-supercritical}, we give a precise scaling limit conjecture for random planar maps toward the supercritical LQG disk.
\item As explained in Appendix~\ref{sec-disk-rotate}, one can ``rotate'' a $k$-tuple of independent fields sampled from the infinite measures on unmarked subcritical LQG disks (or spheres) by an orthogonal matrix to obtain a $k$-tuple of independent fields corresponding to subcritical LQG disks (or spheres) with the same total central charge. From this perspective, Definition~\ref{def-supercritical-disk} is natural if we interpret the zero-boundary Gaussian free field as the ``LQG disk with central charge $\ccL = 1$''. This is reasonable, e.g., since the Gaussian free field defines a conformal field theory of central charge 1 (see Footnote~\ref{footnote-free-boundary} for some additional discussion of why we want a zero-boundary GFF instead of a free-boundary GFF).
\end{itemize}

Let $\Phi$ and $\Phi_2$ be as in Definition~\ref{def-supercritical-disk}.
To define a boundary length measure associated with $\Phi$, we let $\phi$ be the harmonic function on $\BB D$ which agrees with $\Phi$ on $\bdy\BB D$ and for $z\in\bdy\BB D$ we let $\Phi^\ep(z) := \phi((1-\ep) z)$. We similarly define $ \Phi_2^\ep(z)$. By Definition~\ref{def-supercritical-disk}, we have $\Phi^\ep|_{\bdy\BB D} = \frac{Q}{2} \Phi_2^\ep|_{\bdy\BB D}$.
We associate with $\Phi$ the \textbf{supercritical LQG boundary length measure} which is the critical Gaussian multiplicative chaos measure defined by the following limit in probability with respect to the weak topology for measures on $\bdy\BB D$: 
\eqb \label{eqn-supercritical-length} 
\nu_\Phi := \lim_{\ep \to 0} \sqrt{\log(1/\ep)} \ep e^{\frac{2}{Q} \Phi^\ep(z)} \,|dz|  = \lim_{\ep \to 0} \sqrt{\log(1/\ep)} \ep e^{ \Phi_2^\ep(z)} \,|dz|
\eqe 
where $|dz|$ denotes Lebesgue measure on $\bdy\BB D$. See~\cite[Theorem 2.5]{powell-gmc-survey} and the references thereafter for the existence of the limit in a much more general setting. In other words, the supercritical LQG boundary length measure associated with $\Phi$ is the same as the critical LQG length measure associated with $\Phi_2$. 

\begin{lem} \label{lem-bdy-coord}
The measure $\nu_\Phi$ is compatible with the $Q$-LQG coordinate change formula, in the sense that if $f : \BB D \to \BB D$ is a conformal automorphism, then a.s.\ 
\eqb \label{eqn-supercritical-coord} 
\nu_\Phi(A) = \nu_{\Phi\circ f + Q\log|f'|}(f^{-1}(A) ) ,\quad \text{$\forall A\subset \bdy \BB D$ Borel} . 
\eqe 
\end{lem}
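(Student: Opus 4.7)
The key observation is that, by the second equality in the definition~\eqref{eqn-supercritical-length}, the measure $\nu_\Phi$ is \emph{literally identical} (not merely equal in distribution) to the critical ($Q=2$) LQG boundary length measure $\nu_{\Phi_2}$ associated with the field $\Phi_2$. This is because the zero-boundary GFF $\Psi$ has vanishing boundary values, so its harmonic extension is identically zero and hence $\Psi^\ep \equiv 0$ on $\bdy\BB D$; consequently $\frac{2}{Q}\Phi^\ep = \Phi_2^\ep$ and the two $\ep$-approximations in~\eqref{eqn-supercritical-length} coincide pointwise. This already reduces the lemma to the $Q=2$-LQG coordinate change formula for the critical GMC boundary length measure on $\bdy\BB D$.

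Given a conformal automorphism $f : \BB D \to \BB D$, I would substitute the decomposition~\eqref{eqn-supercritical-disk} into $\wt\Phi := \Phi\circ f + Q\log|f'|$ to obtain
\eqb
\wt\Phi = \frac{Q}{2}\bigl(\Phi_2\circ f + 2\log|f'|\bigr) + \frac{\sqrt{4-Q^2}}{2}(\Psi\circ f) =: \frac{Q}{2}\wt\Phi_2 + \frac{\sqrt{4-Q^2}}{2}\wt\Psi,
\eqe
where $\wt\Phi_2$ is the $Q=2$-LQG coordinate change of $\Phi_2$ by $f$, and $\wt\Psi := \Psi\circ f$ is again a zero-boundary GFF on $\BB D$ by the conformal invariance of the zero-boundary GFF on the disk. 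Since $\wt\Psi$ still has vanishing harmonic extension, the same observation as in the previous paragraph, applied to $\wt\Phi$ in place of $\Phi$, gives $\nu_{\wt\Phi} = \nu_{\wt\Phi_2}$ as measures on $\bdy\BB D$.

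It then remains to invoke the $Q=2$-LQG coordinate change formula for the critical boundary length measure, namely $\nu_{\wt\Phi_2}(f^{-1}(A)) = \nu_{\Phi_2}(A)$. This is the boundary analogue of~\cite[Theorem 13]{shef-renormalization} (which treats the critical area measure) and can be established either directly by a Kahane-type argument for critical GMC on $\bdy\BB D$, or by passing to the $\gamma\to 2^-$ limit in the subcritical boundary coordinate change formula of~\cite{shef-kpz,shef-wang-lqg-coord} (an approach already used in the proof of Lemma~\ref{lem-marked-pts}). Concatenating the steps yields $\nu_{\wt\Phi}(f^{-1}(A)) = \nu_{\wt\Phi_2}(f^{-1}(A)) = \nu_{\Phi_2}(A) = \nu_\Phi(A)$, which is exactly~\eqref{eqn-supercritical-coord}. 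The only minor point requiring attention is the regularity of $\log|f'|$ on $\bdy\BB D$, but for a Möbius self-map of $\BB D$ the derivative $f'$ is nonvanishing and continuous on $\ol{\BB D}$, so no additional regularization issues arise and there is no substantial obstacle to the plan.
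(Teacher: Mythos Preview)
Your proposal is correct and follows essentially the same approach as the paper: write $\Phi\circ f + Q\log|f'| = \frac{Q}{2}(\Phi_2\circ f + 2\log|f'|) + \frac{\sqrt{4-Q^2}}{2}\Psi\circ f$, observe that $\Psi\circ f$ vanishes on $\bdy\BB D$, and reduce to the critical ($Q=2$) LQG boundary coordinate change formula, justified either by adapting~\cite[Theorem 13]{shef-renormalization} or by a $\gamma\to 2^-$ limit. The paper's proof is slightly terser but identical in substance.
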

\begin{proof}
We have 
\eqb \label{eqn-coord-compatible}
\Phi \circ f + Q \log|f'|  = \frac{Q}{2} \left( \Phi_2 \circ f + 2\log |f'| \right) + \frac{\sqrt{4-Q^2}}{2}\Psi \circ f   .
\eqe 
Since $\Psi \circ f$ is zero on $\bdy\BB D$, the coordinate change relation~\eqref{eqn-supercritical-coord} follows from this and the LQG coordinate change formula for the critical LQG boundary length measure.
The LQG coordinate change formula for the critical LQG boundary length measure can be obtained in various ways, e.g., by adapting the argument of~\cite[Theorem 13]{shef-renormalization} (which gives the analogous statement for the critical LQG area measure) or by taking a limit of the $\gamma$-LQG boundary length measures as $\gamma \to 2^-$~\cite[Section 4.1]{aps-critical-lqg-lim}.
\end{proof}

In light of Lemma~\ref{lem-bdy-coord}, if $\Phi$ is an embedding of a supercritical LQG disk into a domain $U\subset\BB C$ whose boundary is the image of a curve, then we can define the LQG length measure $\nu_\Phi$ on $\bdy U$ to be the pushforward under $f$ of $\nu_{\Phi\circ f + Q \log|f'|}$, where $f : \BB D \to U$ is a conformal map.

\subsection{Conformal loop ensembles and their couplings with the GFF} 
\label{sec-cle}

\subsubsection{Non-nested CLE and LQG} 
\label{sec-non-nested}

For $\kappa \in (8/3,4]$, the \textbf{(non-nested) conformal loop ensemble (CLE$_\kappa$)} on a proper simply connected open set $U\subset\BB C$ is a random countable collection of disjoint non-nested loops in $U$ which each locally look like SLE$_\kappa$ curves. Almost surely, Lebesgue-a.e.\ point of $U$ is surrounded by exactly one loop. The law of CLE$_\kappa$ is conformally invariant in the sense that if $\Gamma$ is a CLE$_\kappa$ in $U$ and $f : U\to V$ is a conformal map, then $f(\Gamma)$ is a CLE$_\kappa$ in $V$. CLE$_\kappa$ can be defined via branching SLE$_\kappa(\kappa-6)$ curves~\cite{shef-cle} or equivalently as the outer boundaries of the outermost clusters of a Brownian loop soup on $U$ with intensity $\cc(\kappa)/2$~\cite{shef-werner-cle}. 

The following is a precise version of Theorem~\ref{thm-main-intro}.

\begin{thm}[LQG/CLE$_4$ coupling, supercritical case] \label{thm-cle4-coupling} 
Let $\ccL \in (1,25)$, let $Q = \sqrt{(\ccL-1)/6}$, and let $\Phi$ be the standard embedding into $\BB D$ of a doubly marked unit boundary length supercritical LQG disk with central charge $\ccL$ (Definition~\ref{def-supercritical-disk}). There exists a coupling $(\Phi ,\Gamma)$ of $\Phi$ with a (non-nested) CLE$_4$ $\Gamma$ such that the following is true. 
For each loop $\ell \in \Gamma$, let $U_\ell \subset \BB D$ be the open region enclosed by $\ell$. 
Then the central charge $\ccL$ LQG surfaces $\{ (U_\ell , \Phi|_{U_\ell}) / {\sim_Q} \}_{ \ell\in \Gamma }$ are conditionally independent LQG disks with central charge $\ccL$ given their boundary lengths. 
Furthermore, in this coupling, $\Gamma$ is neither independent from $\Phi$ nor determined by $\Phi$.
\end{thm}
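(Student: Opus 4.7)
My plan is to construct the coupling explicitly using the identification of $\mathrm{CLE}_4$ as a local set of the zero-boundary GFF (Miller--Sheffield; Aru--Sepulveda--Werner), and then to reduce the supercritical coupling to the already-established critical case (Theorem~\ref{thm-subcritical-coupling}). Following~\cite{shef-miller-cle4,asw-local-sets}, I couple the zero-boundary GFF $\Psi$ from Definition~\ref{def-supercritical-disk} with a non-nested $\mathrm{CLE}_4$ $\Gamma$ so that $\Gamma$ is a local set of $\Psi$ with the property: conditional on $\Gamma$, inside each loop region $U_\ell$ one has $\Psi|_{U_\ell}=\wt\Psi_\ell+h_\ell$, where the $\wt\Psi_\ell$ are independent zero-boundary GFFs on $U_\ell$ and the labels $h_\ell\in\{-2\lambda,+2\lambda\}$ are i.i.d.\ uniform, independent of the $\wt\Psi_\ell$. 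Since $\Gamma$ is a measurable function of $\Psi$ and $\Phi_2\perp\Psi$, we have $\Gamma\perp\Phi_2$; I take $(\Phi,\Gamma)$ as the candidate coupling.

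Applying Theorem~\ref{thm-subcritical-coupling} in the critical case to $\Phi_2$ together with the independent $\Gamma$ yields: conditional on $\Gamma$ and on the critical boundary lengths $L^{\mathrm c}_\ell:=\nu_{\Phi_2}(\bdy U_\ell)$, the unmarked surfaces $(U_\ell,\Phi_2|_{U_\ell})/{\sim_2}$ are independent critical LQG disks of length $L^{\mathrm c}_\ell$. I then decompose
\[
\Phi|_{U_\ell}=\tfrac{Q}{2}\Phi_2|_{U_\ell}+\tfrac{\sqrt{4-Q^2}}{2}\wt\Psi_\ell+\tfrac{\sqrt{4-Q^2}}{2}h_\ell.
\]
A conformally invariant reformulation of Definition~\ref{def-supercritical-disk} --- obtained by transporting the ``critical disk $+$ independent zero-boundary GFF'' structure through a conformal map $U_\ell\to\BB D$ and combining the LQG coordinate change formula with conformal invariance of the zero-boundary GFF --- shows that $(U_\ell,\tfrac{Q}{2}\Phi_2|_{U_\ell}+\tfrac{\sqrt{4-Q^2}}{2}\wt\Psi_\ell)/{\sim_Q}$ is an unmarked supercritical LQG disk of boundary length $L^{\mathrm c}_\ell$. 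Adding the constant $\tfrac{\sqrt{4-Q^2}}{2}h_\ell$ preserves the supercritical disk structure and, via the scaling identity $\nu_{\Phi+c}=e^{2c/Q}\nu_\Phi$ implicit in~\eqref{eqn-supercritical-length}, rescales the boundary length by $e^{\sqrt{4-Q^2}h_\ell/Q}$; hence $M_\ell:=(U_\ell,\Phi|_{U_\ell})/{\sim_Q}$ is an unmarked supercritical LQG disk of length $L_\ell:=e^{\sqrt{4-Q^2}h_\ell/Q}L^{\mathrm c}_\ell=\nu_\Phi(\bdy U_\ell)$.

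The $M_\ell$ are therefore conditionally independent given $\Gamma$ and $((L^{\mathrm c}_\ell,h_\ell))_\ell$, with conditional law depending on this data only through the componentwise scalars $L_\ell$. A short measure-theoretic argument (tower property plus componentwise factorization) upgrades this to conditional independence given just $(L_\ell)_\ell$, after which the $\Gamma$-conditioning can be integrated out since the remaining conditional law does not depend on $\Gamma$. The assertion that $\Gamma$ is neither determined by $\Phi$ nor independent of $\Phi$ is handled separately in Section~\ref{sec-non-trivial}.

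I anticipate the main obstacle to be the conformally invariant reformulation of Definition~\ref{def-supercritical-disk}: one needs the distribution of $(U,\tfrac{Q}{2}X+\tfrac{\sqrt{4-Q^2}}{2}Y)/{\sim_Q}$ to depend on $X$ only through the unmarked critical LQG surface $(U,X)/{\sim_2}$, provided $Y$ is an independent zero-boundary GFF on $U$. This reduces to checking that replacing $X$ by $X\circ g+2\log|g'|$ for a conformal automorphism $g$ of $U$ and correspondingly composing $Y$ with $g$ (again an independent zero-boundary GFF by conformal invariance) yields the same ${\sim_Q}$-equivalence class --- a direct expansion of the coordinate change formulas, but the key algebraic bridge between the critical and supercritical definitions.
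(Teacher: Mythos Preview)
Your proposal is correct and follows essentially the same strategy as the paper. The paper constructs the coupling via Definition~\ref{def-disk-cle4} (taking $\Gamma$ to be the outermost level loops of the zero-boundary GFF $\Psi$), reduces to the critical case via Theorem~\ref{thm-subcritical-coupling} at $\gamma=2$, and then performs exactly the coordinate-change computation you isolate in your final paragraph (this is the content of~\eqref{eqn-coupling-coord}); the reduction of the conditioning from $(\Gamma,\{L^{\mathrm c}_\ell\},\{h_\ell\})$ to just the supercritical boundary lengths is carried out at the end of the proof of Theorem~\ref{thm-cle4-coupling-nested} in the same ``conditional law depends only on $L_\ell$'' manner you indicate. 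The only structural difference is that the paper first proves the stronger nested statement (Theorem~\ref{thm-cle4-coupling-nested}) and obtains Theorem~\ref{thm-cle4-coupling} as the $n=1$ case, and it makes the conformal-invariance step concrete by sampling marked points from $\nu_{\Phi_2}$ and passing to the standard embedding via explicit conformal maps $f_\ell$, whereas you phrase this step abstractly as well-definedness of the supercritical disk law on the level of $\sim_2$-equivalence classes.
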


In the setting of Theorem~\ref{thm-cle4-coupling}, the boundary lengths of the LQG surfaces $(U_\ell , \Phi|_{U_\ell}) / {\sim_Q} $ are well-defined since these LQG surfaces are supercritical LQG disks, see~\eqref{eqn-supercritical-length} and the discussion just after. 

We will actually prove a stronger version of Theorem~\ref{thm-cle4-coupling}, namely Theorem~\ref{thm-cle4-coupling-nested}, where we couple $\Phi$ with a whole nested CLE$_4$. Theorem~\ref{thm-cle4-coupling} is obtained from this result by forgetting all but the outermost loops. 

Theorem~\ref{thm-cle4-coupling} is the supercritical analog of the following theorem for subcritical and critical LQG. 

\begin{thm}[LQG/CLE coupling, subcritical/critical case] \label{thm-subcritical-coupling}
Let $\gamma \in (\sqrt{8/3},2]$, equivalently $Q = 2/\gamma + \gamma/2 \in [2,5/\sqrt 6)$ or $\ccL = 1+6Q^2 \in [25,26)$. 
Let $\Phi$ be the standard embedding into $\BB D$ of a doubly marked unit boundary length $\gamma$-LQG disk (defined in the same manner as in Definitions~\ref{def-critical-disk} and~\ref{def-circle-avg}, see Remark~\ref{remark-subcritical-disk}).
Let $\Gamma$ be a (non-nested) CLE$_{\kappa=\gamma^2}$ in $\BB D$ sampled independently from $\Phi$. 
For each loop $\ell \in \Gamma$, let $U_\ell \subset \BB D$ be the open region enclosed by $\ell$. 
Then the central charge $\ccL$ LQG surfaces $\{ (U_\ell , \Phi|_{U_\ell}) / {\sim_Q} \}_{ \ell\in \Gamma }$ are conditionally independent LQG disks of central charge $\ccL$ given their boundary lengths.  
\end{thm}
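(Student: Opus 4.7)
The plan is to reduce the statement to the SLE/LQG conformal welding theorems of~\cite{shef-zipper,wedges}, following~\cite{msw-simple-cle-lqg} in the subcritical range $\gamma \in (\sqrt{8/3}, 2)$ and~\cite{ag-critical-cle4} at criticality. The key observation is that, because $\Gamma$ is sampled independently of $\Phi$, one is free to re-sample $\Gamma$ via a branching radial SLE$_\kappa(\kappa-6)$ exploration as in the CLE construction of~\cite{shef-cle}. Concretely, a target-invariant radial SLE$_\kappa(\kappa-6)$ started from $-1 \in \bdy \BB D$ and aimed at $+1$ has a ``trunk'' which traces the outer boundaries of all loops of $\Gamma$ separating $-1$ from $+1$, and excursions which trace the CLE$_\kappa$ loops themselves. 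This exploration is independent of $\Phi$.

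\textbf{Subcritical case.} Apply the quantum zipper / conformal welding for SLE$_\kappa(\rho)$ curves from~\cite{shef-zipper,wedges} to the independent pair (SLE$_\kappa(\kappa-6)$ trunk, $\gamma$-LQG disk). These theorems say that when one cuts a $\gamma$-LQG disk along this independent curve, the complementary connected components, indexed by the excursions of the driving process on each side of the trunk, form a collection of conditionally independent $\gamma$-LQG disks given their boundary lengths, with the joint boundary length profile encoded by the jumps of a $\kappa/4$-stable process. In particular, for each loop $\ell$ traced by an excursion, the bubble $(U_\ell, \Phi|_{U_\ell})/{\sim_Q}$ is a $\gamma$-LQG disk of boundary length $\nu_\Phi(\ell)$, and is conditionally independent of everything outside given $\nu_\Phi(\ell)$. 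Using the target-invariance of the radial SLE$_\kappa(\kappa-6)$ construction, one can iterate the exploration inside any loop that has been revealed (treating the revealed loop as a new LQG disk with an independent CLE$_\kappa$ sampled inside), and conclude by induction that every $\ell \in \Gamma$ satisfies the stated property, with joint conditional independence given all boundary lengths.

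\textbf{Critical case and main obstacle.} At $\gamma = 2$, direct welding arguments are harder because the boundary GMC measure degenerates and needs the critical normalization. I would follow~\cite{ag-critical-cle4}: either adapt the welding to the critical mating-of-trees framework of~\cite{ahps-critical-mating}, or take a limit $\gamma_n \to 2^-$, apply the already established subcritical coupling for each $\gamma_n$, and pass to the limit using convergence of the subcritical LQG disk to the critical LQG disk (\cite[Lemma 4.5]{ahps-critical-mating}) together with convergence of CLE$_{\gamma_n^2}$ to CLE$_4$. The main obstacle in both the subcritical and critical settings is making the precise welding identity rigorous, that the quantum surface cut out inside a single SLE$_\kappa(\kappa-6)$ excursion is genuinely a $\gamma$-LQG \emph{disk} with boundary length equal to the associated jump of the driving stable process, and not some other quantum surface (e.g.\ a wedge or an infinite-volume surface). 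This requires applying mating-of-trees in the slightly non-standard setting of SLE$_\kappa(\kappa-6)$ on a finite-volume disk, and at criticality additionally requires controlling the degeneration of the length measure as $\gamma \to 2^-$.
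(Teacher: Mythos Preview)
Your proposal is correct and aligns with the paper's treatment: the paper does not give an independent proof of this theorem but simply cites~\cite[Theorem 1.1]{msw-simple-cle-lqg} for the subcritical case and~\cite[Theorem 1.1]{ag-critical-cle4} for the critical case obtained as the $\gamma\to 2^-$ limit. Your sketch accurately outlines the strategy behind those cited results (branching SLE$_\kappa(\kappa-6)$ exploration plus conformal welding for $\gamma<2$, and a limiting argument at $\gamma=2$), so you are in fact providing more detail than the paper itself.
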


In the subcritical case $\gamma \in (\sqrt{8/3},2)$, Theorem~\ref{thm-subcritical-coupling} follows from~\cite[Theorem 1.1]{msw-simple-cle-lqg}. 
The critical case $\gamma=2$ can be deduced from the subcritical case by taking a limit as $\gamma\to 2^-$, see~\cite[Theorem 1.1]{ag-critical-cle4}.

\subsubsection{Nested CLE and the GFF} 
\label{sec-nested}

Let $U\subset\BB C$ be open and simply connected, $U\not=\BB C$. 
The \textbf{nested CLE$_\kappa$} $\ol\Gamma$ on $U$ is obtained from the non-nested CLE$_\kappa$ $\Gamma$ by the following inductive procedure.
Let $\Gamma^1 := \Gamma$. 
Inductively, assume that $n\in\BB N$ and a collection of loops $\Gamma^n$ in $U$ (the \textbf{$n$th generation loops}) has been defined. 
Conditional on $\{\Gamma^k\}_{k\leq n}$, for each loop $\ell\in \Gamma^n$, let $\Gamma_\ell$ be a CLE$_\kappa$ in the open region $U_\ell$ enclosed by $\ell$. We take these CLE$_\kappa$s to be conditionally independent given $\{\Gamma^k\}_{k\leq n}$. We then let $\Gamma^{n+1} = \bigcup_{\ell \in\Gamma^n} \Gamma_\ell$. Finally, we let 
\eqb \label{eqn-nested}
\ol\Gamma := \bigcup_{n=1}^\infty \Gamma^n .
\eqe 

A crucial property of CLE$_4$ for the purposes of this paper is its representation as the \textbf{level loops} of the zero-boundary GFF~\cite{shef-miller-cle4} (see \cite[Section 4]{asw-local-sets} for a published construction). 
To explain the construction, let $\ol\Gamma$ be a nested CLE$_4$ on $U\subset \BB C$, as in~\eqref{eqn-nested}. 
Let $\{X_\ell\}_{\ell\in \ol\Gamma}$ be random variables indexed by the CLE$_4$ loops which are conditionally independent given $\ol\Gamma$, with 
\eqb \label{eqn-cle-signs}
\BB P[X_\ell = 1\,|\,\ol\Gamma]  =\BB P[X_\ell = -1\,|\, \ol\Gamma] = \frac12 .
\eqe  
For $n\in\BB N$, let $\Psi_n$ be the piecewise constant, a.e.\ defined function on $\BB D$ which, for each $n$th generation loop $\ell\in\Gamma^n$, satisfies\footnote{ 
The constant $\pi$ in \eqref{eqn-restricted-gff} depends on the choice of normalization for the GFF $\Psi$. In this paper, as is common in papers working with Liouville quantum gravity, our normalization is chosen so that $\op{Cov}(\Psi(z) , \Psi(w)) \sim \log |z-w|^{-1}$ as $w\to z$ (c.f.~\eqref{eqn-gff-cov}). If instead $\op{Cov}(\Psi(z) , \Psi(w)) \sim c\log |z-w|^{-1}$, for some $c>0$, then $\pi$ would be replaced by $\sqrt c \pi$. 
}
\eqb \label{eqn-restricted-gff}
\Psi_n|_{U_\ell} = \pi \sum_{k=1}^n X_{\ell^{(k)}} 
\eqe
where $\ell^{(n)} = \ell$ and for $k=1,\dots,n-1$, $\ell^{(k)}$ is the unique loop in $\Gamma^k$ which disconnects $\ell$ from $\bdy\BB D$. Then $\Psi_n$ converges a.s.\ in the distributional sense as $n\to\infty$ to a zero-boundary GFF $\Psi$.

In the above coupling, $\Psi$ and $\left(\ol\Gamma , \{X_\ell\}_{\ell\in\ol\Gamma} \right)$ are a.s.\ given by measurable functions of each other. 
Furthermore, by the recursive nature of the construction, we have the following Markov property. For each $n\in\BB N$, the conditional law of $(\Psi,\Gamma)$ given
\eqb \label{eqn-cond-on-outer}
\bigcup_{k=1}^n \Gamma^k  \quad \text{and} \quad \left\{ X_\ell : \ell\in \bigcup_{k=1}^n \Gamma^k\right\} 
\eqe 
is described as follows. 
Conditional on~\eqref{eqn-cond-on-outer}, let $\{\Psi_\ell\}_{\ell\in \Gamma^n}$ be conditionally independent zero-boundary GFFs on the domains $U_\ell$ enclosed by the loops in $\Gamma^n$.
Then 
\eqb \label{eqn-level-line-full}
\Psi|_{U_\ell} = \Psi_\ell + \pi \sum_{k=1}^n X_{\ell^{(k)}}   \quad \forall \ell \in \Gamma^n , 
\eqe 
where $\ell^{(k)}$ for $k=1,\dots,n$ are the loops surrounding $\ell$, as in~\eqref{eqn-restricted-gff}.
Furthermore, the set $\ol\Gamma|_{U_\ell}$ of loops of $\ol\Gamma$ which are contained in $U_\ell$ is the same as the set of level loops of $\Psi_\ell$.

\subsection{Coupling of supercritical LQG with nested CLE$_4$} 
\label{sec-cle4-coupling}

In this subsection we will define our coupling of the supercritical LQG disk with a nested CLE$_4$. 
In light of Definition~\ref{def-supercritical-disk} and the coupling of the zero-boundary GFF with CLE$_4$ in Section~\ref{sec-nested}, one might already have a guess as to what our coupling should look like. 
To give a precise definition, it is convenient to have the following terminology. 

\begin{defn}[Loop-decorated LQG surface] \label{def-lqg-surface-loop}
Let $\ccL  > 1$ and consider the set of triples $(U,\Phi,\Gamma)$, where $U\subset \BB C$ is open, $\Phi$ is a generalized function on $U$, and $\Gamma$ is a countable collection of loops in $U$. We define an equivalence relation on such triples by $(U,\Phi,\Gamma) \sim_Q (\wt U,\wt\Phi, \wt \Gamma)$ if there is a conformal map $f : \wt U \to U$ such that 
\eqb \label{eqn-lqg-coord-loop}
\wt\Phi = \Phi\circ f +Q\log|f'| \quad \text{and} \quad f(\wt \Gamma) = \Gamma
\eqe 
A \textbf{loop-decorated LQG surface} of central charge $\ccL$ is an equivalence class of such triples under ${\sim_Q}$.
If $(U,\Phi,\Gamma)$ is an equivalence class representative, we refer to $(\Phi,\Gamma)$ as an \textbf{embedding} of the corresponding loop-decorated LQG surface into $U$. 
\end{defn}

The following definition gives the coupling of supercritical LQG and CLE$_4$ considered in this paper. 

\begin{defn}[CLE$_4$-decorated LQG disk] \label{def-disk-cle4} 
Let $\ccL \in (1,25]$ and $L > 0$. 
Let $\Phi_2$ be the standard embedding into $\BB D$ of a critical LQG disk of boundary length $L$ (Definition~\ref{def-circle-avg}) and let $\Psi$ be a zero-boundary GFF on $\BB D$, sampled independently from $\Phi_2$. 
Let
\eqb \label{eqn-coupling-def}
\Phi = \frac{Q}{2} \Phi_2 + \frac{\sqrt{4-Q^2}}{2} \Psi  .
\eqe 
be a supercritical LQG disk of central charge $\ccL$ and boundary length $L$ as in Definition~\ref{def-supercritical-disk}. 

Let $\ol\Gamma$ be the nested CLE$_4$ which is the full set of level loops of $\Psi$, as in Section~\ref{sec-non-nested}. 
The \textbf{CLE$_4$-decorated LQG disk of central charge $\ccL$ and boundary length $L$} is the loop-decorated LQG surface $(\BB  D,\Phi,\ol\Gamma)/{{\sim_Q}}$. The pair $(\Phi,\ol\Gamma)$ is called a \textbf{standard embedding} of this loop-decorated LQG surface. Note that this embedding is not determined by $(\BB  D,\Phi,\ol\Gamma)/{{\sim_Q}}$.  
\end{defn}

We now verify that the coupling of Definition~\ref{def-disk-cle4} satisfies the desired Markov property. 
The Markov property is more complicated to state than in Theorem~\ref{thm-cle4-coupling} since we are working with a nested CLE$_4$. 

\begin{thm} \label{thm-cle4-coupling-nested}
Let $\ccL \in (1,25]$ and let $(\BB D,\Phi,\ol\Gamma)/{\sim_Q}$ be a CLE$_4$-decorated LQG disk of central charge $\ccL$ and unit boundary length, as in Definition~\ref{def-disk-cle4}.  
For $n\in\BB N$ let $\Gamma^n \subset\ol\Gamma$ be the set of $n$th generation loops as in~\eqref{eqn-nested}. 
For each loop $\ell \in \ol\Gamma$, let $U_\ell \subset \BB D$ be the open region enclosed by $\ell$ and let $\ol\Gamma|_{U_\ell}$ be the set of loops in $\ol\Gamma$ which are contained in $U_\ell$. 

For each $n\in\BB N$, if we condition on the supercritical LQG boundary lengths of the LQG surfaces
\eqbn
\left\{    (U_\ell , \Phi|_{U_\ell}) / {\sim_Q}   \,:\, \ell \in \bigcup_{k=1}^n \Gamma^k \right\}
\eqen 
then the loop-decorated central charge-$\ccL$ LQG surfaces 
\eqb \label{eqn-cle4-surfaces}
\{ (U_\ell , \Phi|_{U_\ell} , \ol\Gamma|_{U_\ell} ) / {\sim_Q} \}_{ \ell\in \Gamma^n }
\eqe 
are conditionally independent CLE$_4$-decorated LQG disks of central charge $\ccL$ with the given boundary lengths. 
\end{thm}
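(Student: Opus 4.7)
The plan is to first condition on a finer $\sigma$-algebra $\mcl F$ which resolves the generation-$\le n$ loops, their signs, and their critical LQG boundary lengths, prove the desired independence and disk identification under this finer conditioning, then observe that the resulting conditional law depends on $\mcl F$ only through the supercritical boundary lengths, so one can coarsen back.

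Fix $n \in \BB N$ and let $\mcl F$ be the $\sigma$-algebra generated by $\bigcup_{k=1}^n \Gamma^k$, the signs $\{X_\ell : \ell \in \bigcup_{k=1}^n \Gamma^k\}$, and the critical LQG boundary lengths $\{L_\ell^{\mathrm{crit}}\}$ of these loops. By the level-loop Markov property from Section \ref{sec-nested} (equation \eqref{eqn-level-line-full}), for each $\ell \in \Gamma^n$ we may write
\[
\Psi|_{U_\ell} = \wh\Psi_\ell + c_\ell, \qquad c_\ell := \pi \sum_{k=1}^n X_{\ell^{(k)}},
\]
where the $\{\wh\Psi_\ell\}_{\ell \in \Gamma^n}$ are conditionally independent zero-boundary GFFs and the level loops of $\wh\Psi_\ell$ inside $U_\ell$ are precisely $\ol\Gamma|_{U_\ell}$. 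Since $\ol\Gamma$ is a function of $\Psi$ and $\Phi_2$ is independent of $\Psi$, the CLE$_4$ $\ol\Gamma$ is independent of $\Phi_2$, so Theorem \ref{thm-subcritical-coupling} applies to the pair $(\Phi_2,\ol\Gamma)$. Iterating that theorem $n$ times, successively within each region enclosed by a loop of the previous generation (using that nested CLE$_4$'s in different enclosed regions are conditionally independent of one another and of the critical LQG disks they decorate), I obtain that conditional on $\mcl F$ the critical LQG surfaces $\{(U_\ell, \Phi_2|_{U_\ell})/\sim_2\}_{\ell \in \Gamma^n}$ are conditionally independent critical LQG disks of the prescribed boundary lengths $L_\ell^{\mathrm{crit}}$. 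Combined with the conditional independence of the $\wh\Psi_\ell$'s from $\Phi_2$, the pairs $\{(\Phi_2|_{U_\ell}, \wh\Psi_\ell)\}_{\ell \in \Gamma^n}$ are conditionally mutually independent given $\mcl F$.

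I would then identify each loop-decorated surface as a CLE$_4$-decorated LQG disk. Fix a conformal map $\phi_\ell : \BB D \to U_\ell$ and use the $\sim_Q$ change-of-coordinates to pull back $(U_\ell, \Phi|_{U_\ell}, \ol\Gamma|_{U_\ell})$ to $\BB D$. The pulled-back field is
\[
\wt\Phi_\ell = \tfrac{Q}{2}\Bigl(\Phi_2|_{U_\ell}\circ\phi_\ell + 2\log|\phi_\ell'| + \tfrac{\sqrt{4-Q^2}}{Q}\,c_\ell\Bigr) + \tfrac{\sqrt{4-Q^2}}{2}\,(\wh\Psi_\ell\circ\phi_\ell),
\]
decorated by the loop set $\phi_\ell^{-1}(\ol\Gamma|_{U_\ell})$. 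Adding the constant $\frac{\sqrt{4-Q^2}}{Q}c_\ell$ to a critical LQG disk field multiplies its boundary length by $e^{(\sqrt{4-Q^2}/Q)c_\ell}$, so the parenthesized quantity is an embedding into $\BB D$ of a critical LQG disk of boundary length $L_\ell := L_\ell^{\mathrm{crit}}\cdot e^{(\sqrt{4-Q^2}/Q)c_\ell}$. By conformal invariance of the GFF, $\wh\Psi_\ell\circ\phi_\ell$ is a zero-boundary GFF on $\BB D$, independent of this critical embedding, with level loop set equal to $\phi_\ell^{-1}(\ol\Gamma|_{U_\ell})$. Invoking the same embedding-invariance argument as the one given immediately after Definition \ref{def-supercritical-disk} (which extends to the loop-decorated setting because $\Psi$ and $\Psi\circ f$ have the same level loop structure after pulling back), Definition \ref{def-disk-cle4} identifies the LQG surface $(U_\ell, \Phi|_{U_\ell}, \ol\Gamma|_{U_\ell})/\sim_Q$ as a CLE$_4$-decorated LQG disk of boundary length $L_\ell$. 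A short computation using \eqref{eqn-supercritical-length}, noting that $\wh\Psi_\ell\circ\phi_\ell$ has zero boundary values so that the harmonic extension of $\wt\Phi_\ell|_{\bdy\BB D}$ picks up only $\frac{Q}{2}$ times the critical disk plus the constant $\frac{\sqrt{4-Q^2}}{2}c_\ell$, confirms that this $L_\ell$ is indeed the supercritical LQG boundary length of $(U_\ell, \Phi|_{U_\ell})/\sim_Q$.

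To conclude, the conditional law of $\{(U_\ell, \Phi|_{U_\ell}, \ol\Gamma|_{U_\ell})/\sim_Q\}_{\ell \in \Gamma^n}$ given $\mcl F$ is the product, over $\ell \in \Gamma^n$, of CLE$_4$-decorated LQG disk laws of boundary length $L_\ell$, and this product is a measurable function of $\{L_\ell\}_{\ell \in \Gamma^n}$ alone. Since each such $L_\ell$ lies in the $\sigma$-algebra generated by the supercritical boundary lengths $\{L_\ell\}_{\ell \in \bigcup_{k=1}^n \Gamma^k}$, which in turn is coarser than $\mcl F$, coarsening the conditioning back to just these supercritical lengths preserves the product law and yields the theorem. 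The main obstacle is the identification step in the middle paragraph: carefully bookkeeping the additive shift $\frac{\sqrt{4-Q^2}}{2}c_\ell$ as it simultaneously adjusts the embedding of the critical LQG disk inside the parentheses and scales the supercritical boundary length, so that Definition \ref{def-disk-cle4} can be invoked with the matching boundary-length parameter.
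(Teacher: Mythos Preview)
Your proposal is correct and follows essentially the same approach as the paper: condition on a $\sigma$-algebra containing the critical boundary lengths and the signs, use the critical-case Markov property (Lemma~\ref{lem-cle4-coupling-critical}, i.e., the iteration of Theorem~\ref{thm-subcritical-coupling}) together with the level-line Markov property~\eqref{eqn-level-line-full} to get conditionally independent (critical disk, zero-boundary GFF, CLE$_4$) triples, then compute the $Q$-coordinate change to recognize each restricted surface as a CLE$_4$-decorated supercritical disk with boundary length $L_\ell^{\mathrm{crit}}\exp\bigl(\tfrac{\sqrt{4-Q^2}}{Q}c_\ell\bigr)$, and finally coarsen since this conditional law depends only on those lengths. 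The only cosmetic differences are that you include the loop positions in $\mcl F$ (harmless, since the conditional law still factors through the supercritical lengths) and that you appeal to embedding-invariance for a generic conformal map $\phi_\ell$ rather than producing the standard embedding via marked boundary points sampled from $\nu_{\Phi_2}$ as the paper does; both routes are valid.
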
  

We will prove in Section~\ref{sec-non-trivial} that in the setting of Theorem~\ref{thm-cle4-coupling-nested} for $\ccL\in (1,25)$, the CLE$_4$ $\ol\Gamma$ is neither independent from nor determined by $\Phi$. 
We will deduce Theorem~\ref{thm-cle4-coupling-nested} from the analogous statement in the critical case.

\begin{lem} \label{lem-cle4-coupling-critical}
The statement of Theorem~\ref{thm-cle4-coupling-nested} is true in the critical case $\ccL = 25$.
\end{lem}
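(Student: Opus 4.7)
The critical case is structurally much simpler than the supercritical case because $\ccL = 25$ forces $Q=2$, so $\sqrt{4-Q^2} = 0$ and Definition~\ref{def-disk-cle4} collapses to $\Phi = \Phi_2$. In particular, $\Phi$ is just the standard embedding of a critical LQG disk, and the nested CLE$_4$ $\ol\Gamma$, which is constructed as the set of level loops of the independent zero-boundary GFF $\Psi$, is independent of $\Phi$. This independence is the key structural feature that makes the critical case tractable.

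The plan is to proceed by induction on $n$. For the base case $n=1$, the outermost loops $\Gamma^1$ form a non-nested CLE$_4$ which, as noted, is independent of $\Phi$. I would invoke Theorem~\ref{thm-subcritical-coupling} at $\gamma = 2$ (this includes the critical case by the discussion following that theorem) to conclude that, conditional on their boundary lengths, the plain LQG surfaces $\{(U_\ell, \Phi|_{U_\ell})/{\sim_Q}\}_{\ell \in \Gamma^1}$ are conditionally independent critical LQG disks of the given boundary lengths. To upgrade from plain LQG surfaces to loop-decorated LQG surfaces, I would use the nested CLE construction from Section~\ref{sec-nested}: conditional on $\Gamma^1$, the restricted collections $\ol\Gamma|_{U_\ell}$ are conditionally independent nested CLE$_4$s on the respective domains $U_\ell$; and since $\ol\Gamma$ is globally independent of $\Phi$, these nested CLE$_4$s remain independent of $\Phi|_{U_\ell}$. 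Comparing to Definition~\ref{def-disk-cle4} applied at $\ccL = 25$, this exactly says that each triple $(U_\ell, \Phi|_{U_\ell}, \ol\Gamma|_{U_\ell})/{\sim_Q}$ has the law of a CLE$_4$-decorated critical LQG disk with the specified boundary length, and that these triples are conditionally independent given the boundary lengths.

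For the inductive step, I would assume the statement for $n$ and apply the $n=1$ argument \emph{inside} each $n$-th generation CLE$_4$-decorated LQG disk. The essential observation is that for $\ell \in \Gamma^n$, the set $\Gamma^{n+1} \cap U_\ell$ is precisely the first generation of the nested CLE$_4$ inside $U_\ell$; so once we know by induction that $(U_\ell, \Phi|_{U_\ell}, \ol\Gamma|_{U_\ell})/{\sim_Q}$ is a CLE$_4$-decorated critical LQG disk, applying the base case inside each of these surfaces and combining over $\ell \in \Gamma^n$ yields the desired conditional independence and marginal law at level $n+1$.

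The only potentially subtle point is bookkeeping the conditioning on boundary lengths alongside the Markovian conditioning on $\bigcup_{k=1}^n \Gamma^k$, but I do not expect this to be a real obstacle: everything factors cleanly through the independence of $\ol\Gamma$ from $\Phi$ and the standard conditional-independence structure of nested CLE$_4$, so the induction goes through essentially automatically once the base case is in hand.
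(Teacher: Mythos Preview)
Your proposal is correct and follows essentially the same approach as the paper: observe that $Q=2$ forces $\Phi=\Phi_2$ and hence $\ol\Gamma$ is independent of $\Phi$, invoke Theorem~\ref{thm-subcritical-coupling} at $\gamma=2$ together with the recursive definition of nested CLE$_4$ for the base case, and then induct on $n$. The paper's proof is more terse but structurally identical.
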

\begin{proof}
In this case, we get from Definition~\ref{def-disk-cle4} that $\Phi = \Phi_2$ and $\ol\Gamma$ are independent. 
By Theorem~\ref{thm-subcritical-coupling} in the case $\gamma=2$ and the recursive definition of nested CLE$_4$ (see~\eqref{eqn-nested}), we get the Markov property of Theorem~\ref{thm-cle4-coupling-nested} in the case $n=1$. 
In general, the statement follows by induction on $n$. 
\end{proof}

\begin{proof}[Proof of Theorem~\ref{thm-cle4-coupling-nested}]  
Let $\Phi_2$, $\Psi$, and $\ol\Gamma$ be as in Definition~\ref{def-disk-cle4}. 
Basically, the theorem follows from the analogous coupling statement in the critical case (Lemma~\ref{lem-cle4-coupling-critical}), the Markovian property~\eqref{eqn-level-line-full} of the coupling of $\Psi$ with $\ol\Gamma$, and the fact that the linear combination~\eqref{eqn-coupling-def} behaves nicely under LQG coordinate change. Let us now give the details. 
\medskip
  
\noindent\textit{Step 1: constructing conditionally independent critical LQG disks and zero-boundary GFFs.} 
Let $\{X_\ell\}_{\ell\in\ol\Gamma}$ be the i.i.d.\ Bernoulli$(1/2)$ signs indexed by the loops of $\ol\Gamma$, as in~\eqref{eqn-cle-signs} and~\eqref{eqn-level-line-full}. 
For $n\in\BB N$, we define the $\sigma$-algebra 
\eqb \label{eqn-length-info}
\mcl F_n := \sigma \left\{ (  \nu_{\Phi_2}(\ell)  , X_\ell ) \,:\, \ell \in \bigcup_{k=1}^n \Gamma^k \right\} .
\eqe 
We first describe the conditional law of the loop-decorated LQG surfaces~\eqref{eqn-cle4-surfaces} given $\mcl F_n$. 

The signs $\{X_\ell\}_{\ell\in \ol\Gamma}$ are independent from $\Phi_2$, so by Lemma~\ref{lem-cle4-coupling-critical} we get that under the conditional law given $\mcl F_n$, the loop-decorated critical LQG surfaces 
\eqb \label{eqn-use-critical}
\{(U_\ell , \Phi_2|_{U_\ell} , \ol\Gamma|_{U_\ell} ) /{\sim_2} \}_{\ell\in\Gamma^n}
\eqe 
are conditionally independent critical CLE$_4$-decorated LQG disks with the given boundary lengths. 

To make use of this, conditional on $(\Phi_2,\Psi)$, for $\ell \in \Gamma^n$ let $x_\ell , y_\ell \in \ell$ be sampled from the critical LQG length measure $\nu_{\Phi_2}|_\ell$, normalized to be a probability measure. We take these points to be conditionally independent given $(\Phi_2,\Psi)$. By Lemma~\ref{lem-marked-pts}, $(U_\ell , \Phi_2|_{U_\ell}  ,x_\ell , y_\ell) / {\sim_2}$ is a doubly marked critical LQG disk conditional on its boundary length. 
For $\ell\in\Gamma^n$, let $f_\ell : \BB D \to U_\ell$ be the conformal map such that $f_\ell(-1) = x_\ell$, $f_\ell(1) = y_\ell$, and the field 
\eqb \label{eqn-map-to-disk2}
\Phi_2 \circ f_\ell +  2\log|f_\ell'| 
\eqe 
is the standard embedding into $\BB D$ of the critical LQG disk $(U_\ell , \Phi_2|_{U_\ell}  ,x_\ell , y_\ell) / {\sim_2}$ (Definition~\ref{def-circle-avg}).  

By~\eqref{eqn-use-critical}, the pairs 
\eqb \label{eqn-coupling-critical-cle4}
(\Phi_2 \circ f_\ell +  2\log|f_\ell'| , f_\ell^{-1}(\ol\Gamma|_{U_\ell}))
\eqe 
for different loops $\ell \in \Gamma^n$ are conditionally independent given $\mcl F_n$, and the conditional law of each is that of the standard embedding of a critical LQG disk with the given boundary length together with an independent CLE$_4$. 
 
The signs $\{X_{\ell'} : \ell'\in \ol\Gamma \}$ are conditionally independent given $(\Phi_2 ,\ol\Gamma)$.
From this and~\eqref{eqn-coupling-critical-cle4}, we infer that under the conditional law given $\mcl F_n$, the triples
\eqbn
\left( \Phi_2 \circ f_\ell +  2\log|f_\ell'| , f_\ell^{-1}(\ol\Gamma|_{U_\ell}) , \{X_{\ell'} : \ell'\in \ol\Gamma|_{U_\ell}  \} \right)
\eqen
are conditionally independent and the conditional law of each is that the standard embedding into $\BB D$ of a critical LQG disk with the given boundary length together with an independent CLE$_4$ and a collection of independent Bernoulli$(1/2)$ signs associated with the CLE$_4$ loops. 
 
For $\ell\in\Gamma^n$, let $\Psi_\ell$ be the zero-boundary GFF on $U_\ell$ as in~\eqref{eqn-level-line-full}.
By the definition of the coupling of $\Psi$ with $\ol\Gamma$, we have that $\Psi_\ell\circ f_\ell$ is determined by $f_\ell^{-1}(\ol\Gamma|_{U_\ell}) $ and $\{X_{\ell'} : \ell'\in \ol\Gamma|_{U_\ell}  \}$ in the same manner that $\Psi$ is determined by $\ol\Gamma$ and $\{X_{\ell'} \}_{\ell' \in\ol\Gamma}$. 
By combining this with the previous paragraph, we get that under the conditional law given $\mcl F_n$, the triples
\eqb \label{eqn-coupling-triples}
\left( \Phi_2 \circ f_\ell +  2\log|f_\ell'| , \Psi_\ell \circ f_\ell , f_\ell^{-1}(\ol\Gamma|_{U_\ell})   \right)
\eqe 
are conditionally independent, and the conditional law of each is that the standard embedding into $\BB D$ of a critical LQG disk with the given boundary length together with an independent zero-boundary GFF and its associated CLE$_4$ level lines.
\medskip

\noindent\textit{Step 2: conclusion.}  
Let us now consider the fields that we get from the central charge-$\ccL$ LQG coordinate change rule. Recalling~\eqref{eqn-coupling-def}, then using~\eqref{eqn-level-line-full}, we obtain that for each $\ell\in \Gamma^n$,
\allb \label{eqn-coupling-coord}
\Phi \circ f_\ell + Q \log |f_\ell'| 
&= \frac{Q}{2} (\Phi_2\circ f_\ell + 2\log|f_\ell'|)  + \frac{\sqrt{4-Q^2}}{2} \Psi \circ f_\ell \notag\\
&= \frac{Q}{2} \left( \Phi_2\circ f_\ell + 2\log|f_\ell'| + L_\ell \right)  + \frac{\sqrt{4-Q^2}}{2} \Psi_\ell \circ f_\ell  ,\notag\\
&\qquad\text{where} \quad L_\ell := \frac{\sqrt{4-Q^2}}{Q} \pi  \sum_{k=1}^n X_{\ell^{(k)}}  . 
\alle
Here, as in~\eqref{eqn-level-line-full}, $\ell^{(n)} =\ell$ and for $k=1,\dots,n-1$, $\ell^{(k)} \in \Gamma^k$ is the unique $k$th generation loop surrounding $\ell$. 

By the discussion surrounding~\eqref{eqn-coupling-triples} and the Definition~\ref{def-critical-disk} of the critical LQG disk, under the conditional law given $\mcl F_n$, the fields $\Phi_2\circ f_\ell + 2\log|f_\ell'| +  L_\ell  $ for $\ell\in\Gamma^n$ are conditionally independent standard embeddings of critical LQG disks with boundary lengths $e^{L_\ell} \nu_{\Phi_2}(\ell)$ (note that this length is $\mcl F_n$-measurable by definition). 
By combining this with~\eqref{eqn-coupling-coord}, the discussion surrounding~\eqref{eqn-coupling-triples}, and the Definition~\ref{def-disk-cle4} of the CLE$_4$-decorated supercritical LQG disk, we see that under the conditional law given $\mcl F_n$, the loop-decorated central charge-$\ccL$ LQG surfaces~\eqref{eqn-cle4-surfaces} are conditionally independent CLE$_4$-decorated LQG disks of central charge $\ccL$ with boundary lengths $e^{L_\ell} \nu_{\Phi_2}(\ell)$.
Since this conditional law depends only on the boundary length $e^{L_\ell} \nu_{\Phi_2}(\ell)$, it follows that we get the same conditional law if instead of conditioning on $\mcl F_n$ we condition only on the supercritical LQG lengths 
\eqbn
\left\{  e^{L_\ell} \nu_{\Phi_2}(\ell) : \ell\in\bigcup_{k=1}^n \Gamma^k \right\} .
\eqen   
\end{proof}

\subsection{Properties of the coupling}
\label{sec-comments}

Throughout this subsection, we assume that we are in the setting of Definition~\ref{def-disk-cle4} for $\ccL\in (1,25)$, so that in particular $(\Phi, \ol\Gamma)$ is a standard embedding of a CLE$_4$-decorated supercritical LQG disk.
We will observe several additional features of the coupling of $\ol\Gamma$ with $\Phi$.  

\subsubsection{Interpolation} 
\label{sec-interpolation}
By Definition~\ref{def-disk-cle4}, our coupling interpolates between independent CLE$_4$ and a critical LQG disk for $\ccL = 25$; and the coupling of a zero-boundary GFF and its CLE$_4$ level lines for $\ccL = 1$.

\subsubsection{Duality between $\ccL$ and $26-\ccL$} 
\label{sec-duality}

Let $\wh\Phi$ be given by the orthogonal linear combination of $\Phi_2$ and $\Psi$ as compared to~\eqref{eqn-coupling-def}, i.e., 
\eqb \label{eqn-dual-disk}
\wh\Phi := \frac{\sqrt{4-Q^2}}{2} \Phi_2  - \frac{Q}{2} \Psi  .
\eqe
Since $1 + 6(\sqrt{4-Q^2})^2 = 26 - \ccL$ and $\Psi\eqD -\Psi$, we see from Definition~\ref{def-supercritical-disk} that $\wh\Phi$ is an embedding of a unit boundary length LQG disk of central charge $26-\ccL$. We have $Q \wh\Phi = \sqrt{4-Q^2} \Phi$ on $\bdy\BB D$, and in the interior of $\BB D$ the fields $\Phi$ and $\wh\Phi$ locally look like two independent GFFs. 

The set of level lines $\ol\Gamma$ of $\Psi$ is the same as the set of level lines of $-\Psi$. 
Hence, by Definition~\ref{def-disk-cle4}, the pair $(\wh\Phi , \ol\Gamma)$ is a standard embedding of a CLE$_4$ decorated LQG disk of central charge $26-\ccL$. In other words, we have a coupling $(\Phi,\wh\Phi,\ol\Gamma)$ of standard embeddings of LQG disks of central charges $\ccL$ and $26-\ccL$ with the same CLE$_4$. 
The CLE$_4$ $\ol\Gamma$ is a.s.\ determined by $\Phi$ and $\wh\Phi$ since $\ol\Gamma$ is a.s.\ determined by $\Psi$ and
\eqb 
\Psi = \frac{\sqrt{4-Q^2}}{2} \Phi  -  \frac{Q}{2} \wh\Phi .
\eqe  

Thus, our coupling of $\Phi$ with $\ol\Gamma$ can be viewed as arising from a coupling of $\Phi$ with $\wh\Phi$, wherein the level lines of an appropriate linear combination of $\Phi$ and $\wh\Phi$ divide the LQG surface $(\BB D,\Phi)/{\sim_Q}$ (and also the LQG surface $(\BB D,\wh\Phi)/{\sim_{ \sqrt{4-Q^2 }}}$) into sub-surfaces which are conditionally independent given their boundary lengths. 

Consequently, the coupling of Definition~\ref{def-disk-cle4} is a coupling of two locally independent objects of central charges $\ccL$ and $26-\ccL$, just like the couplings of subcritical or critical LQG with independent SLE (recall~\eqref{eqn-matched}). 

\begin{remark}
The idea of considering LQG with $\ccL \in (1,25)$ decorated by a ``matter field'' which is described by LQG with central charge $26-\ccL$ also appears in the work of Gervais and Roussel on strongly coupled Liouville theory from the perspective of quantum groups~\cite{gervais-solving1,gervais-solving2,gervais-solving3}, see~\cite{gervais-weak-to-strong} for an overview of this work. It is of interest to investigate further how the work of Gervais and Roussel relates to the present paper (see also Problem~\ref{prob-self-dual}). 
\end{remark}

\subsubsection{Inner and outer boundary lengths do not agree} 
\label{sec-gap}

Recall that $\{X_\ell\}_{\ell\in \ol\Gamma}$ are the i.i.d.\ signs associated with the loops, as in~\eqref{eqn-cle-signs}.
Also recall from~\eqref{eqn-level-line-full} that for each $n$th generation loop $\ell\in\Gamma^n$, $\ell^{(n)},\dots,\ell^{(1)}$ denote the CLE$_4$ loops surrounding $\ell$, with $\ell^{(n)} = \ell$. 
 
By the definition of the coupling of $\Psi$ with $\ol\Gamma$ (see in particular~\eqref{eqn-level-line-full}), for each $\ell\in \Gamma^n$, the harmonic extension of the values of the GFF $\Psi$ on $\ell$ is equal to $\pi  \sum_{k=1}^n X_{\ell^{(k)}}$ on the region $U_\ell$ enclosed by $\ell$.
The coupling of $\Psi$ with $\ol\Gamma$ is constructed in~\cite[Section 4.3]{asw-local-sets} via a branching SLE$_4(-2)$ curve coupled with $\Psi$, which traces the loops of $\ol\Gamma$. The harmonic extension of the values of $\Psi$ on each branch of this branching SLE$_4(-2)$ curve differ by $\pm \pi $ on the two sides of the curve. Hence the same is true for each loop $\ell \in \ol\Gamma$. In other words, the harmonic extension of the values of $\Psi$ on $\ell$ approaches $\pi  \sum_{k=1}^{n-1} X_{\ell^{(k)}}$ as we approach $\ell$ from the outside. 

If $\ell\in\Gamma^n$, then the natural LQG length measure on $\ell$ associated with the supercritical LQG disk $(U_\ell , \Phi|_{U_\ell})/{\sim_Q}$ is its supercritical LQG length measure, defined as in~\eqref{eqn-supercritical-length}. By the definition of $\Phi$ in~\eqref{eqn-coupling-def} and the preceding paragraph, this measure is
\eqb \label{eqn-inner-length}
\nu_\Phi^{\op{in}}|_\ell =  \exp\left( \frac{\sqrt{4-Q^2}}{Q}  \pi   \sum_{k=1}^{n } X_{\ell^{(k)}}  \right)  \nu_{\Phi_2}|_\ell ,
\eqe 
where $\nu_{\Phi_2}|_\ell$ is the critical LQG length measure on $\ell$.
However, the natural LQG length measure on $\ell$ associated with the LQG surface $(\BB D\setminus \ol U_\ell , \Phi|_{\BB D\setminus \ol U_\ell})$ (defined, e.g., using local absolute continuity with respect to the supercritical LQG disk) is instead
\eqb \label{eqn-outer-length}
\nu_\Phi^{\op{out}}|_\ell = \exp\left( \frac{\sqrt{4-Q^2}}{Q}  \pi   \sum_{k=1}^{n-1} X_{\ell^{(k)}}  \right)  \nu_{\Phi_2}|_\ell .
\eqe  
In other words, we have the following.

\begin{prop} \label{prop-gap}
For each $\ell\in\ol\Gamma$, the supercritical LQG lengths of $\ell$ with respect to $\Phi$ as measured from the inside and outside of $\ell$ differ by a factor of $\exp\left(\frac{\sqrt{4-Q^2}}{Q}   \pi  X_\ell   \right)$.
\end{prop}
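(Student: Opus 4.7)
The plan is to compute both the inner and outer LQG length measures \eqref{eqn-inner-length}--\eqref{eqn-outer-length} directly from Definition~\ref{def-disk-cle4}, and then divide. The key algebraic observation is that the supercritical length exponent $2/Q$ in \eqref{eqn-supercritical-length} applied to \eqref{eqn-coupling-def} gives $(2/Q)\Phi = \Phi_2 + (\sqrt{4-Q^2}/Q)\Psi$, so the only thing that changes between the inner and outer views of a loop $\ell$ is the locally constant part of the harmonic extension of $\Psi$ to each side of $\ell$.

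For the inner measure, I would fix $\ell \in \Gamma^n$ and condition on the first $n$ generations of CLE$_4$ loops together with their signs $\{X_{\ell^{(k)}}\}_{k=1}^n$. By the level-line Markov property \eqref{eqn-level-line-full}, on $U_\ell$ the field $\Psi$ decomposes as the constant $\pi \sum_{k=1}^n X_{\ell^{(k)}}$ plus a zero-boundary GFF $\Psi_\ell$ on $U_\ell$. Substituting into $(2/Q)\Phi$ and applying the critical GMC limit \eqref{eqn-supercritical-length} to the boundary component $\ell$ from inside $U_\ell$, the constant pulls out of the exponential as a prefactor and the zero-Dirichlet field $\Psi_\ell$ contributes the trivial factor $1$ on $\ell$ (by the same argument that underlies Lemma~\ref{lem-bdy-coord}: its harmonic extension to $\ell$ vanishes). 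This produces exactly \eqref{eqn-inner-length} with base measure $\nu_{\Phi_2}|_\ell$.

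For the outer measure, I would repeat the same GMC computation but from the exterior side of $\ell$. The crucial input is the $\pm\pi$ height-gap property of the CLE$_4$/GFF coupling established in \cite[Section~4.3]{asw-local-sets} via the branching SLE$_4(-2)$ that traces $\ol\Gamma$: the harmonic extension of $\Psi$ approaches the constant $\pi \sum_{k=1}^{n-1} X_{\ell^{(k)}}$ as one crosses $\ell$ from outside --- one term shorter than from inside, the missing term being the $\pi X_\ell$ jump across $\ell$ itself. The same GMC computation then yields \eqref{eqn-outer-length}. Dividing \eqref{eqn-inner-length} by \eqref{eqn-outer-length}, the terms for $k < n$ cancel and the quotient is $\exp\!\bigl((\sqrt{4-Q^2}/Q)\pi X_\ell\bigr)$, which is the claim.

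The main obstacle is not any individual step but checking that the ``outer'' length measure is well-defined as a critical GMC on $\ell$ built from the local behavior of $\Phi$ on $\BB D \setminus \ol U_\ell$, and that this is what is meant by the LQG length measure on the outside of $\ell$. Once one notes that $\Psi$ is a genuine distribution on a neighborhood of $\ell$ whose local decomposition on each side is a constant-plus-Dirichlet-GFF with the two constants identified above, the inner and outer critical GMCs both exist by standard GMC theory and are mutually absolutely continuous with Radon--Nikodym derivative equal to the exponential of the jump; everything else is bookkeeping.
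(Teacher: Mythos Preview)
Your proposal is correct and follows essentially the same approach as the paper: the paper derives \eqref{eqn-inner-length} and \eqref{eqn-outer-length} in the discussion immediately preceding Proposition~\ref{prop-gap} using exactly the inputs you name --- the level-line Markov property~\eqref{eqn-level-line-full} for the inside, the $\pm\pi$ height gap from the branching SLE$_4(-2)$ construction in \cite[Section~4.3]{asw-local-sets} for the outside, and the definition~\eqref{eqn-coupling-def} of $\Phi$ --- and then states the proposition as the quotient of these two formulas. Your final paragraph on the well-definedness of the outer measure is slightly more explicit than the paper, which simply says the outer measure is ``defined, e.g., using local absolute continuity with respect to the supercritical LQG disk.''
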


This is in contrast to the coupling of a $\gamma$-LQG surface with an independent SLE$_{\kappa=\gamma^2}$-type curve for $\gamma \in (0,2]$, where the LQG lengths as measured from the two sides of the curve agree~\cite{shef-zipper,hp-welding}. 

\subsubsection{Law of the boundary lengths} 
\label{sec-boundary-law}

The joint law of the LQG lengths of the loops of $\ol\Gamma$ with respect to $\Phi$, as measured from the insides of the loops, can be described explicitly as follows. By~\eqref{eqn-inner-length}, for $\ell\in\Gamma^n$ we have
\eqb \label{eqn-inner-length-total}
\nu_\Phi^{\op{in}}(\ell) = \exp\left(  \frac{\sqrt{4-Q^2}}{Q}  \pi   \sum_{k=1}^{n } X_{\ell^{(k)}} \right) \nu_{\Phi_2}(\ell)  ,
\eqe 
where we recall that $\{X_\ell\}_{\ell\in \ol\Gamma}$ are i.i.d.\ Bernoulli$(1/2)$ signs indexed by the loops in $\ol\Gamma$ and $\ell^{(n)},\dots,\ell^{(1)}$ are the loops surrounding $\ell$ (including $\ell$ itself). 

The formula~\eqref{eqn-inner-length-total} describes $\{\nu_\Phi^{\op{in}}(\ell)\}_{\ell\in \ol\Gamma}$ in terms of the critical LQG lengths $\{\nu_{\Phi_2}(\ell)\}_{\ell \in \ol\Gamma}$. By \cite[Theorem 1.2]{ag-critical-cle4}, the joint law of the $\nu_{\Phi_2}$-lengths of the outermost loops of $\ol\Gamma$ can be described in terms of the jumps of a certain $3/2$-stable process (the analogous description in the subcritical case follows from results in~\cite{msw-simple-cle-lqg,ccm-cascade,bbck-growth-frag}, see~\cite[Proposition 2.21]{as-cle-integrability}). The joint law of $\{\nu_{\Phi_2}(\ell)\}_{\ell \in \ol\Gamma}$ can be obtained from this by iterating (see Lemma~\ref{lem-cle4-coupling-critical}). 
The resulting description of the joint law of $\{\nu_\Phi^{\op{in}}(\ell)\}_{\ell\in \ol\Gamma}$ plays a fundamental role in the paper~\cite{bgs-supercritical-crt}. 

Alternatively, as explained in~\cite[Section 5.2.2]{ahps-critical-mating}, the joint law of $\{\nu_{\Phi_2}(\ell)\}_{\ell \in \ol\Gamma}$ is the same as the joint law of the positive jumps of a certain growth fragmentation associated with a Brownian excursion which was introduced in~\cite{ad-growth-fragmentation}.

\subsection{Non-triviality of the coupling} 
\label{sec-non-trivial}
 
In this section we prove that in the setting of Definition~\ref{def-supercritical-disk}, the CLE$_4$ $\ol\Gamma$ is neither independent from nor determined by $\Phi$, and the same is true for the set of outermost loops of $\ol\Gamma$. 
This combined with Theorem~\ref{thm-cle4-coupling-nested} concludes the proof of Theorem~\ref{thm-cle4-coupling}. 
We note that it is not obvious that $\ol\Gamma$ is neither independent from nor determined by $\Phi$ since, in the setting of Section~\ref{sec-nested}, $\ol\Gamma$ does not determine the same information as the GFF $\Psi$: rather, to construct $\Psi$ from $\ol\Gamma$ one also needs the signs $\{X_\ell\}_{\ell\in\ol\Gamma}$.

\begin{prop} \label{prop-not-independent}
Let $(\Phi,\ol\Gamma)$ be a standard embedding of a CLE$_4$-decorated LQG disk of unit boundary length (Definition~\ref{def-disk-cle4}). The nested CLE$_4$ $\ol\Gamma$ is not independent from $\Phi$. The same is true for the non-nested CLE$_4$ $\Gamma=\Gamma^1$. 
\end{prop}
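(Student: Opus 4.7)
My plan is to exhibit a real-valued linear functional of $\Phi$ whose conditional variance given $\ol\Gamma$ (respectively $\Gamma^1$) is not a.s.\ equal to its unconditional variance. Since independence of $\Phi$ and $\ol\Gamma$ would force these two variances to agree almost surely, this will yield the claim in both cases.

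Fix a smooth, compactly supported test function $\rho : \BB D \to \BB R$. Because $\Phi_2$ and $\Psi$ are independent and $\ol\Gamma$ is a measurable function of $\Psi$ alone, $\Phi_2$ is independent of $\ol\Gamma$, and~\eqref{eqn-coupling-def} yields
\[
\op{Var}(\Phi(\rho) \mid \ol\Gamma) = \tfrac{Q^2}{4}\,\op{Var}(\Phi_2(\rho)) + \tfrac{4-Q^2}{4}\,\op{Var}(\Psi(\rho) \mid \ol\Gamma).
\]
The first term is deterministic, so the issue reduces to the second. Iterating the Markov decomposition~\eqref{eqn-level-line-full} through all generations, together with the convergence $\Psi_n \to \Psi$ in the distributional sense, one sees that conditionally on $\ol\Gamma$ the random variable $\Psi(\rho)$ is a sum of independent mean-zero terms,
\[
\Psi(\rho) = \pi \sum_{\ell \in \ol\Gamma} X_\ell \int_{U_\ell} \rho(z)\, dz,
\]
where $\{X_\ell\}_{\ell \in \ol\Gamma}$ are i.i.d.\ uniform $\pm 1$ signs independent of $\ol\Gamma$. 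Orthogonality then gives
\[
\op{Var}(\Psi(\rho) \mid \ol\Gamma) = \pi^2 \sum_{\ell \in \ol\Gamma} \left( \int_{U_\ell} \rho(z)\, dz \right)^{\!2}.
\]

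The main task, and the only step I expect to require real care, is to show that this $\ol\Gamma$-measurable sum is not a.s.\ constant for a well-chosen $\rho$. I would take $\rho$ to be a smooth bump supported in a small disk around a fixed interior point $z_0 \in \BB D$. On the event that the outermost CLE$_4$ loop around $z_0$ contains $\op{supp}(\rho)$ in its interior, the sum is approximately $(\int \rho)^2$ (dominated by that single outermost loop), while on the complementary event of positive probability the integral splits across smaller loops and the sum drops strictly below this. Both events having positive probability forces the random sum to take at least two distinct values with positive probability; alternatively, one can relate the leading behavior of the sum to the conformal radius of the outermost loop around $z_0$, whose distribution is known to be continuous and non-degenerate. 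Either way, $\op{Var}(\Phi(\rho) \mid \ol\Gamma)$ is non-constant, contradicting independence.

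For the non-nested statement about $\Gamma^1$, the same outline applies, using only the $n = 1$ case of~\eqref{eqn-level-line-full}. Conditional on $\Gamma^1$, one has $\Psi|_{U_\ell} = \Psi_\ell + \pi X_\ell$ for $\ell \in \Gamma^1$ with independent zero-boundary GFFs $\Psi_\ell$ and independent signs $X_\ell$, so
\[
\op{Var}(\Psi(\rho) \mid \Gamma^1) = \sum_{\ell \in \Gamma^1} \iint_{U_\ell \times U_\ell} G_{U_\ell}(z,w) \rho(z) \rho(w) \, dz\, dw + \pi^2 \sum_{\ell \in \Gamma^1} \left( \int_{U_\ell} \rho \right)^{\!2},
\]
with $G_{U_\ell}$ the Dirichlet Green's function on $U_\ell$. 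Both sums are genuinely random functions of $\Gamma^1$, so the same bump-function argument produces the required non-constancy. Apart from this non-triviality step, everything else is bookkeeping of the conditional decompositions already recorded in Section~\ref{sec-nested}.
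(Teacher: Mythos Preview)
Your overall strategy---detecting dependence through a non-constant conditional variance---is sound and genuinely different from the paper's argument, but the key step is mis-argued. You claim that on the event that the outermost loop around $z_0$ contains $\op{supp}(\rho)$, the sum $\pi^2\sum_{\ell\in\ol\Gamma}\bigl(\int_{U_\ell}\rho\bigr)^2$ is ``approximately $(\int\rho)^2$, dominated by that single outermost loop''. This is false: every nested loop containing $\op{supp}(\rho)$ contributes exactly $(\int\rho)^2$, so the sum is at least $N\,(\int\rho)^2$, where $N$ is the (random) number of generations before $\op{supp}(\rho)$ first gets split. The deeper generations do not become negligible on that event. The good news is that this observation \emph{rescues} the argument rather than killing it: since the log-conformal radius of the successive nested loops around $z_0$ is a random walk whose step distribution puts mass arbitrarily close to $0$ (CLE$_4$ loops can lie arbitrarily close to the boundary), $N$ is an unbounded random variable, hence so is your sum, and an unbounded random variable is certainly not a.s.\ constant. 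You should replace your two-event comparison by this unboundedness argument. A secondary point: you implicitly use that $\op{Var}(\Phi_2(\rho))<\infty$ to interpret $\op{Var}(\Phi(\rho)\mid\ol\Gamma)$; this is true for $\rho$ compactly supported away from the marked points, but deserves a sentence.

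For comparison, the paper's proof is shorter and avoids the non-constancy issue entirely. It argues by contradiction: if $\Gamma$ were independent of $\Phi$, then the zero-boundary part of $\Phi$ (a genuine zero-boundary GFF, by the orthogonal decomposition in~\eqref{eqn-coupling-def}) would be independent of $\Gamma$, so for a fixed loop $\ell\in\Gamma$ the harmonic extension $\op{Harm}_\ell(\Phi)(z)$, normalized by $\bigl(\log\tfrac{1}{\op{dist}(z,\partial U_\ell)}\bigr)^{-1/2}$, would converge in law to a standard Gaussian as $z\to\partial U_\ell$. But since $\Psi$ has constant boundary data $\pi X_\ell$ along $\ell$, in fact $\op{Harm}_\ell(\Phi)=\tfrac{Q}{2}\op{Harm}_\ell(\Phi_2)+\text{const}$, and applying the same standard-Gaussian limit to $\Phi_2$ (which genuinely \emph{is} independent of $\Gamma$) shows the normalized limit has variance $Q^2/4<1$. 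The paper's route thus exploits the \emph{local} discrepancy---$\Phi$ looks like $Q/2$ times a GFF near the loops rather than a full GFF---whereas your route detects a \emph{global} discrepancy in a tested variance. Both work once patched; the paper's is more direct.
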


Our proof of Proposition~\ref{prop-not-independent} also works for other embeddings of $(\Phi,\ol\Gamma)$, including ones which depend only on the LQG surface $(\BB D , \Phi)/{\sim_Q}$. For example, the same argument would apply if we sampled three points in $\BB D$ from the LQG boundary length measure $\nu_\Phi$ and specified the embedding by the condition that these three points are mapped to $-i, i,$ and $1$.

\begin{proof}[Proof of Proposition~\ref{prop-not-independent}]
It suffices to show that $\Gamma $ is not independent from $\Phi$. 
Basically, this follows since, due to Definition~\ref{def-disk-cle4}, $\Phi$ looks like $Q/2$ times a GFF near each loop of $\Gamma$, instead of like a GFF. Here is one way to make this precise. 

Assume by way of contradiction that $\Gamma$ is independent from $\Phi$. 
Let $\ell\in \Gamma$ be chosen in a manner depending only on $\Gamma$ and let $U_\ell$ be the open region it disconnects from $\infty$. 
For a generalized function $\phi$ on $\BB D$, we write $\op{Harm}_\ell(\phi)$ for the harmonic function on $U_\ell$ with the same boundary data as $\phi$. 

Let $\Phi_2$ and $\Psi$ be as in Definition~\ref{def-disk-cle4}, so that $\Phi = (Q/2)\Phi_2 + (\sqrt{4-Q^2}/2)\Psi$. 
By Definitions~\ref{def-critical-disk} and~\ref{def-supercritical-disk}, we can write 
\eqbn
\Phi = \wt\Phi^0 + g \quad \text{and}\quad
\Phi_2 = \wt\Phi_2^0 + g_2
\eqen 
 where $\wt\Phi^0$ is a zero-boundary GFF on $\BB D$ and $g$ is a continuous function on $\BB D$ (possibly random and $\wt\Phi^0$-dependent), and similarly for $\wt\Phi_2^0$ and $g_2$. By our assumption on $(\Phi,\Gamma)$ and since $\Phi_2$ is independent from $\ol\Gamma$ by definition, we can take each of $(\wt\Phi^0,g)$ and $(\wt\Phi_2^0,g_2)$ to also be independent from $\Gamma$ (the two pairs do not need to be jointly independent from $\Gamma$).  
   
By, e.g.,~\cite[Lemma 6.4]{ig4}, for $z \in U_\ell$ the conditional law given $\Gamma$ of $\op{Harm}_\ell(\wt\Phi^0)(z)$ is Gaussian with mean zero and variance $\log \frac{1}{\op{dist}(z,\bdy U_\ell)} + O(1)$, where the $O(1)$ depends on $\ell$ but not on $z$. Since $\op{Harm}_\ell(g)$ is a bounded function, it follows that the conditional law given $\Gamma$ of 
\eqb \label{eqn-harm-conv}
\left( \log \frac{1}{\op{dist}(z,\bdy U_\ell)} \right)^{-1/2} \op{Harm}_\ell(\Phi)(z)
\eqe 
converges to the standard Gaussian distribution as $z\to\bdy U_\ell$. The same is true with $\Phi_2$ in place of $\Phi$, with the same proof. 
 
On the other hand, since $\op{Harm}(\Psi)$ is equal to the constant function $ \pi  X_\ell$, we have 
\eqbn
\op{Harm}_\ell(\Phi) = \frac{Q}{2} \op{Harm}_\ell(\Phi_2)(z) + \sqrt{4-Q^2} \pi  X_\ell .
\eqen
By this and~\eqref{eqn-harm-conv} with $\Phi_2$ in place of $\Phi$, we see that the conditional law given $\Gamma$ of the random variable in~\eqref{eqn-harm-conv} also converges to the law of $Q /2$ times a standard Gaussian as $z\to \bdy U_\ell$. 
This contradicts~\eqref{eqn-harm-conv}.
\end{proof}

\begin{prop} \label{prop-not-determined}
Let $(\Phi,\ol\Gamma)$ be a standard embedding of a CLE$_4$-decorated LQG disk of unit boundary length (Definition~\ref{def-disk-cle4}). The nested CLE$_4$ $\ol\Gamma$ is not a.s.\ determined by $\Phi$. The same is true for the non-nested CLE$_4$ $\Gamma = \Gamma^1$. 
\end{prop}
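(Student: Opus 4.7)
The plan is to argue by contradiction via a Cameron-Martin shift that leaves $\Phi$ invariant but changes the level loops of $\Psi$. Suppose $\ol\Gamma = F(\Phi)$ almost surely for some Borel measurable $F$. Recall from Definition~\ref{def-disk-cle4} that $\Phi = \tfrac{Q}{2}\Phi_2 + \tfrac{\sqrt{4-Q^2}}{2}\Psi$ with $\Phi_2$ and $\Psi$ independent. The key algebraic point is that for any function $\eta$, the shift $(\Phi_2, \Psi) \mapsto (\Phi_2 - \sqrt{4-Q^2}\,\eta, \Psi + Q\eta)$ leaves $\Phi$ unchanged, since $\tfrac{Q}{2}(-\sqrt{4-Q^2}) + \tfrac{\sqrt{4-Q^2}}{2}Q = 0$, while generically altering the level loops of $\Psi$.

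First I would fix a smooth $\eta : \BB D \to \BB R$ with compact support in the interior of $\BB D$ and $\eta \not\equiv 0$. Using Cameron-Martin for the zero-boundary GFF $\Psi$, together with the analogous absolute continuity statement for shifts of $\Phi_2$ (using the decomposition $\Phi_2 = \wt\Phi_2^0 + g_2$ from the proof of Proposition~\ref{prop-not-independent}: since $\eta$ vanishes on $\bdy\BB D$, the boundary values of $\Phi_2$ and hence $g_2$ are unaffected, and the shift reduces to a Cameron-Martin shift of the zero-boundary GFF $\wt\Phi_2^0$), I would construct a probability measure $\BB Q$ mutually absolutely continuous with $\BB P$ under which the shifted pair $(\wh\Phi_2, \wh\Psi) := (\Phi_2 - \sqrt{4-Q^2}\,\eta, \Psi + Q\eta)$ has the same joint law as $(\Phi_2, \Psi)$ under $\BB P$. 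Setting $\wh\Phi := \tfrac{Q}{2}\wh\Phi_2 + \tfrac{\sqrt{4-Q^2}}{2}\wh\Psi$ and letting $\wh{\ol\Gamma}$ be the level loops of $\wh\Psi$, the algebraic identity gives $\wh\Phi = \Phi$ under $\BB Q$, and under $\BB Q$ the pair $(\wh\Phi, \wh{\ol\Gamma})$ is a standard embedding of a CLE$_4$-decorated LQG disk with the same joint law as $(\Phi, \ol\Gamma)$ under $\BB P$.

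By absolute continuity $\BB Q \ll \BB P$, the identity $\ol\Gamma = F(\Phi)$ still holds $\BB Q$-almost surely. Applying the hypothesis under $\BB Q$ to $(\wh\Phi, \wh{\ol\Gamma})$ gives $\wh{\ol\Gamma} = F(\wh\Phi) = F(\Phi)$ $\BB Q$-a.s., and hence $\ol\Gamma = \wh{\ol\Gamma}$ $\BB Q$-a.s. But under $\BB Q$, $\ol\Gamma$ is the set of level loops of $\Psi = \wh\Psi - Q\eta$ while $\wh{\ol\Gamma}$ is the set of level loops of the zero-boundary GFF $\wh\Psi$. So the conclusion would be that, for a zero-boundary GFF $\wh\Psi$ on $\BB D$, its CLE$_4$ level loops are almost surely equal to those of $\wh\Psi - Q\eta$. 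The hard part will be to rule out this invariance: the cleanest approach is distributional, observing that the law of $\mathrm{CLE}_4(\wh\Psi)$ is the standard CLE$_4$ law on $\BB D$ while the law of $\mathrm{CLE}_4(\wh\Psi - Q\eta)$ is a non-trivial Girsanov reweighting of it (the relevant Radon-Nikodym derivative is not a.s.\ constant for $\eta \not\equiv 0$), so the two laws differ and the random variables cannot be almost surely equal. Alternatively, replacing $\eta$ by $c\eta$ and rerunning the argument would yield invariance for every $c$; for large $c$, the deterministic function $-cQ\eta$ dominates the GFF fluctuations in the support of $\eta$, and the level loops of $\wh\Psi - cQ\eta$ concentrate near the level sets of $-cQ\eta$ and are visibly different from the CLE$_4$ of $\wh\Psi$. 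The same argument applies with $\Gamma^1$ in place of $\ol\Gamma$, using that $\Gamma^1$ is the set of outermost loops of $\ol\Gamma$ and hence a measurable function of $\Psi$, yielding the non-nested statement.
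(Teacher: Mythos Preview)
Your strategy is genuinely different from the paper's and is a viable route, but the step you yourself flag as ``the hard part'' is not adequately justified, and this is a real gap.

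Your reduction via the Cameron--Martin shift $(\Phi_2,\Psi)\mapsto(\Phi_2-\sqrt{4-Q^2}\,\eta,\Psi+Q\eta)$ is correct in spirit. One minor issue: your justification of absolute continuity for the $\Phi_2$-shift appeals to the decomposition $\Phi_2=\wt\Phi_2^0+g_2$, but as the paper notes, $g_2$ may depend on $\wt\Phi_2^0$, so you cannot simply invoke Cameron--Martin for $\wt\Phi_2^0$ alone. The statement is still true (the critical disk field restricted away from $\bdy\BB D$ is mutually absolutely continuous with a GFF), but it needs a cleaner argument than the one you give.

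The substantive gap is in showing that for a zero-boundary GFF $\wh\Psi$ and nonzero smooth compactly supported $\eta$, the level loops of $\wh\Psi$ and of $\wh\Psi-Q\eta$ are not a.s.\ equal. Your argument (a) asserts that the law of $\op{LevelLoops}(\wh\Psi-Q\eta)$ differs from CLE$_4$ because ``the relevant Radon--Nikodym derivative is not a.s.\ constant''. But the Radon--Nikodym derivative between the \emph{pushforward} laws on loop configurations is the conditional expectation $\BB E[\exp(-Q(\eta,\wh\Psi)_\nabla - \tfrac{Q^2}{2}\|\eta\|_\nabla^2)\mid\ol\Gamma]$, and it is not at all obvious that this is non-constant: the field-level derivative being non-constant does not imply this, since $\ol\Gamma$ does not determine $\wh\Psi$ (it forgets the signs). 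Your argument (b) is purely heuristic: the GFF is not a function, so ``dominates the fluctuations'' and ``concentrate near the level sets of $-cQ\eta$'' have no precise meaning here without substantial further work. A workable route to close the gap: if the two fields share level loops, then the harmonic extension of $(\wh\Psi-Q\eta)|_\ell$ into $U_\ell$ must be the constant $\pm\pi$ for every loop $\ell$, forcing $\eta|_\ell$ to be constant on every nested CLE$_4$ loop; since small CLE$_4$ loops near any point where $\nabla\eta\neq 0$ would then have to lie in smooth level curves of $\eta$, this contradicts the fact that SLE$_4$ curves have Hausdorff dimension $3/2>1$.

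By contrast, the paper's proof avoids this auxiliary fact entirely. It observes (via Proposition~\ref{prop-gap}) that the signs $\{X_\ell\}$ are determined by $\Phi$ and $\ol\Gamma$ through the inner/outer boundary length ratio; hence if $\ol\Gamma$ were determined by $\Phi$, so would $\Psi$ be, which is immediately false from~\eqref{eqn-coupling-def}. For the non-nested case it uses the Markov property (Theorem~\ref{thm-cle4-coupling-nested}) and induction to bootstrap from $\Gamma^1$ to $\ol\Gamma$. This is shorter and uses only properties of the coupling already established in the paper.
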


As in the case of Proposition~\ref{prop-not-independent}, our proof of Proposition~\ref{prop-not-determined} also works for various embeddings which depend only on $(\BB D,\Phi)/{\sim_Q}$. In particular, we get that the loop-decorated LQG surface $(\BB D,\Phi,\ol\Gamma)/{\sim_Q}$ is not a measurable function of $(\BB D,\Phi)/{\sim_Q}$.

\begin{proof}[Proof of Proposition~\ref{prop-not-determined}]
From the construction of the coupling of $\ol\Gamma$ with the zero-boundary GFF $\Psi$ in Section \ref{sec-nested}, we know that a.s.\ $\Psi$ is determined by $\ol\Gamma$ and the signs $\{X_\ell \}_{ \ell\in \ol\Gamma }$. 

By Proposition~\ref{prop-gap}, a.s.\ for each $\ell\in \ol\Gamma$ the ratio of the $\nu_\Phi$ length of $\ell$ as measured from the inside of $\ell$, to the $\nu_\Phi$ length of $\ell$ as measured from the outside of $\ell$, is $\exp\left( \frac{\sqrt{4-Q^2}}{Q} \pi  X_\ell\right)$. Therefore, the signs $\{X_\ell\}_{\ell\in \ol\Gamma}$ are a.s.\ determined by $\Phi$ and $\ol\Gamma$. 

If $\ol\Gamma$ were a.s.\ determined by $\Phi$, then by the preceding paragraph it would follow that also $\{X_\ell\}_{\ell\in \ol\Gamma}$ is a.s.\ determined by $\Phi$, and hence that $\Psi$ is a.s.\ determined by $\Phi$. But, since $\Phi_2$ and $\Psi$ locally look like two independent GFFs, it is clear from~\eqref{eqn-coupling-def} that $\Psi$ is not a.s.\ determined by $\Phi$. 

Now assume by way of contradiction that $\Gamma = \Gamma^1$ is a.s.\ determined by $\Phi$. 
For $\ell\in\Gamma$, let $\Gamma_\ell$ be the non-nested CLE$_4$ consisting out the outermost loops of $\ol\Gamma|_{U_\ell}$, so that $\Gamma^2 = \bigcup_{\ell\in\Gamma} \Gamma_\ell$.  
By Theorem~\ref{thm-cle4-coupling-nested} and our assumption, we get that for each $\ell\in \Gamma$, the loop-decorated LQG surface $(U_\ell,\Phi|_{U_\ell} , \Gamma_\ell)/{\sim_Q}$ is a.s.\ determined by the LQG surface $(U_\ell,\Phi|_{U_\ell})/{\sim_Q}$. The LQG surfaces $(U_\ell,\Phi|_{U_\ell})/{\sim_Q}$ and their embeddings into $U_\ell$ for $\ell\in\Gamma$ are a.s.\ determined by $\Phi$ and $\Gamma$, hence also by $\Phi$. 

From the previous paragraph, we deduce that the CLE$_4$s $\Gamma_\ell$ for $\ell\in \Gamma$ are a.s.\ determined by $\Phi$, and hence that $\Gamma^2$ is a.s.\ determined by $\Phi$. Applying this same argument inductively shows that $\Gamma^n$ is a.s.\ determined by $\Phi$ for each $n\in\BB N$, hence $\ol\Gamma$ is a.s.\ determined by $\Phi$. But, we previously proved that this is not the case, so we are done. 
\end{proof}

\begin{proof}[Proof of Theorem~\ref{thm-cle4-coupling}]
Let $\Phi$ and $\ol\Gamma$ be coupled as in Definition~\ref{def-disk-cle4}. 
Let $\Gamma = \Gamma^1$ be the non-nested CLE$_4$ consisting of the outermost loops of $\ol\Gamma$.
By Theorem~\ref{thm-cle4-coupling-nested} for $n=1$, this coupling satisfies the Markov property of Theorem~\ref{thm-cle4-coupling}. 
By Propositions~\ref{prop-not-independent} and~\ref{prop-not-determined}, in this coupling $\Gamma$ is neither independent from nor determined by $\Phi$. 
\end{proof}

\section{Additional perspectives}
\label{sec-perspectives}

In Section~\ref{sec-action}, we give a non-rigorous description of the joint law of the central charge-$\ccL$ supercritical LQG disk field $\Phi$ and the ``dual'' central charge-$(26-\ccL)$ supercritical LQG disk field $\wh\Phi$ of Section~\ref{sec-duality} in terms of the Liouville action. 
In Section~\ref{sec-rpm-supercritical}, we exhibit for each $\ccL\in (1,25)$ a family of loop-decorated random planar maps which have a Markov property similar to the one in Theorem~\ref{thm-cle4-coupling-nested} and which we conjecture should converge to CLE$_4$-decorated supercritical LQG.

\subsection{Liouville action interpretation}
\label{sec-action}

Let $\ccL\in (1,25)$ and let $\Phi$ and $\wh\Phi$ be the coupled supercritical LQG disks of central charges $\ccL$ and $26-\ccL$ as in Section~\ref{sec-duality}. In this subsection we derive a heuristic formula for the joint law of $(\Phi,\wh\Phi)$ in terms of a version of the Liouville action (``Proposition''~\ref{prop-supercritical-law}). We first review the Liouville action in the subcritical and critical cases. 

Fix $\ccL \geq 25$ and let $Q = \sqrt{\frac{\ccL-1}{6}} = \frac{2}{\gamma} + \frac{\gamma}{2}$ be the corresponding background charge, where $\gamma \in \BB C$ with $|\gamma|=2$ is the coupling constant. 
Let $g$ be a Riemannian metric on the close unit disk $\ol{\BB D}$ and let $\nabla_g$, $R_g$, $K_g$, $M_g$, and $N_g$, respectively, denote the associated gradient, Ricci scalar curvature, geodesic curvature on $\bdy\BB D$, area measure, and length measure on $\bdy\BB D$. The \textbf{Liouville action with central charge $\ccL$ and background metric $g$} is the functional $\mcl S_{\op{L}}^{\ccL} =\mcl S_{\op{L},g}^{\ccL}$ which takes in a real-valued function $\phi$ on $\ol{\BB D}$ and outputs the quantity
\eqb \label{eqn-liouville-action}
\mcl S_{\op{L}}^{\ccL}(\phi) = \frac{1}{4\pi} \int_{\BB D} (|\nabla_g \phi|^2 + Q R_g\phi) \,dM_g + \frac{1}{2\pi} \int_{\bdy\BB D} (Q K_g \phi + 2\pi \mu e^{(\gamma/2)\phi } ) \, dN_g 
\eqe 
where $\mu > 0$ is a parameter called the \textbf{(boundary) cosmological constant}. Note that we have set the bulk cosmological constant to zero (as in, e.g.,~\cite{rz-boundary-lcft}). This is related to the fact that the LQG disks considered in this paper have fixed boundary length but unconstrained area. 

Heuristically speaking, the (critical or subcritical) LQG disk field with central charge $\ccL \geq 25$ and background metric $g$ is a sample $\Phi$ from the ``probability measure'' which is a normalizing constant times
\eqb \label{eqn-lqg-law}
\exp\left( -\mcl S_{\op{L}}^{\ccL}(\phi) \right) D\phi
\eqe 
where $D\phi$ is the ``uniform measure on functions $\ol{\BB D} \to\BB R$''. The definition~\eqref{eqn-lqg-law} does not make literal sense, but it can be made sense of rigorously, at least in the subcritical case; see~\cite{hrv-disk,wedges,ahs-integrability}.  

\begin{remark} \label{remark-liouville-field}
When $\ccL > 25$, the measure~\eqref{eqn-lqg-law} describes the law of the random field obtained by the following procedure: sample an LQG disk with random boundary length (and area), then embed it in $\mathbb D$ in a ``uniformly random'' way to obtain a field in $\mathbb D$. 
More precisely,~\eqref{eqn-lqg-law} can be described rigorously as the law of the Liouville field (Definition~\ref{def-liouville-field}) weighted by an exponential function of its LQG boundary length~\cite[Theorem 1.2]{ahs-integrability}.
Hence a random field sampled from the law~\eqref{eqn-lqg-law} is not exactly the same as the LQG disks considered in this paper (e.g., because the LQG disks in this paper have fixed boundary length), but rather is a minor variant. Analogous statements can be shown for $\mathbf c_L = 25$.
\end{remark}
 
It is not clear a priori how to define the Liouville action~\eqref{eqn-liouville-action} when $\ccL \in (1,25)$ since in this case $\gamma $ is complex, so directly extending the relation~\eqref{eqn-liouville-action} would give us a complex action functional, which cannot be used to define a probability measure. As we will now see, our notion of the supercritical LQG disk can be described in terms of a certain modified version of the Liouville action where we force the term inside the exponential to be real (``Proposition'' \ref{prop-supercritical-law}). 

To this end, let $\ccL \in (1,25)$ and let $Q$ be the corresponding background charge. Let $\Phi_2$ be sampled from the probability measure~\eqref{eqn-lqg-law}, so that $\Phi_2$ describes a critical LQG disk. Let $\Psi$ be an independent zero-boundary GFF on $\BB D$, so that $\Psi$ is a sample from the ``probability measure'' which is a normalizing constant times
\eqb \label{eqn-gff-law}
\exp\left( -\int_{ \BB D}  |\nabla_g \psi|^2   \, dM_g \right) \BB 1\{\psi|_{\bdy\BB D} = 0\}D\psi 
= \exp\left( -\int_{ \BB D}  |\nabla  \psi(z)|^2   \, d^2 z \right) \BB 1\{\psi|_{\bdy\BB D} = 0\} D\psi
\eqe 
where again $D\psi$ denotes the ``uniform measure on functions $\ol{\BB D}\to\BB R$''. 

Define the fields
\eqb \label{eqn-action-rotate}
\Phi = \frac{Q}{2} \Phi_2 + \frac{\sqrt{4-Q^2}}{2} \Psi \quad \text{and} \quad
\wh\Phi = \frac{\sqrt{4-Q^2}}{2}  \Phi_2 - \frac{Q}{2}   \Psi  
\eqe  
which by Definition~\ref{def-supercritical-disk} are (minor variants of; c.f.\ Remark~\ref{remark-liouville-field}) supercritical LQG disks of central charge $\ccL$ and $26-\ccL$, respectively (see also the discussion in Section~\ref{sec-duality}).
We will derive the following heuristic expression for the joint law of $(\Phi,\wh\Phi)$. 

\begin{propq} \label{prop-supercritical-law}
As in~\eqref{eqn-lqg-parameters}, let
\eqb \label{eqn-supercritical-gamma}
\gamma = Q + i\sqrt{4-Q^2} \quad \text{so that} \quad Q = \frac{2}{\gamma} + \frac{\gamma}{2} . 
\eqe 
For each $\ccL \in (1,25)$, the joint law of $(\Phi,\wh\Phi)$ is given by a normalizing constant times
\allb \label{eqn-supercritical-law}
 \exp\left( -  \wt{\mcl S}_{\op{L}}^{\ccL , 26-\ccL}(\phi,\wh\phi) \right)   \BB 1\{ \sqrt{4-Q^2} \phi|_{\bdy\BB D} = Q\wh\phi|_{\bdy\BB D}\} D\phi\,D\wh\phi  
\alle
where $D\phi \, D\wh\phi$ is the ``uniform measure on pairs of functions on $\ol{\BB D}$'' and
\allb \label{eqn-supercritical-action}
\wt{\mcl S}_{\op{L}}^{\ccL , 26-\ccL}(\phi,\wh\phi)
&= \frac{1}{4\pi}  \int_{\BB D} (| \nabla_g \phi |^2 +  Q R_g  \phi)  \,dM_g   + \frac{1}{4\pi}  \int_{\BB D} (| \nabla_g \wh\phi |^2 +  \sqrt{4-Q^2} R_g  \wh\phi) \,dM_g   \notag\\
&\qquad + \frac{1}{2\pi} \int_{\bdy\BB D} (Q K_g \phi + \sqrt{4-Q^2} K_g \wh\phi + 2\pi \mu  e^{(\gamma/2) (\phi - i\wh\phi) } )\, dN_g . 
\alle  
\end{propq}

The action functional $\wt{\mcl S}_{\op{L}}^{\ccL , 26-\ccL}(\phi,\wh\phi)$ of~\eqref{eqn-supercritical-action} is a modified version of $\wt{\mcl S}_{\op{L}}^{\ccL  }(\phi ) + \wt{\mcl S}_{\op{L}}^{ 26-\ccL}( \wh\phi)$ where we have $e^{(\gamma/2)(\phi - i \wh\phi) }$ instead of $e^{(\gamma/2)\phi}  + e^{(\wh\gamma/2)\wh\phi}$ in the boundary term.  
The expression $\phi - i \wh \phi$ is reasonable from the perspective of conformal field theory. It is analogous to the complex field $\phi_\mathrm{L} - i \phi_\mathrm{M}$ where $\phi_\mathrm{L}$ is a Liouville field with coupling constant $\gamma^0 \in (0,2]$ and central charge $\cc_\mathrm{L}^0$, and  $\phi_\mathrm{M}$ is a matter field with central charge $26 - \cc_\mathrm{L}^0$, see \cite{witten-ground-ring} for $\cc_\mathrm{L}^0 =	 25$ or \cite[Sections 3--4]{kostov-ground-ring} for  $\cc_\mathrm{L}^0 > 25$ for examples where this complex field arises in the physics literature.
The truncation $\BB 1\{\sqrt{4-Q^2} \phi|_{\bdy\BB D} = Q\wh\phi|_{\bdy\BB D}\}$ is precisely what is needed in order to make $(\gamma/2)( \phi  - i \wh\phi)$ real on $\bdy\BB D$, which in turn ensures that $\wt{\mcl S}_{\op{L}}^{\ccL , 26-\ccL}(\phi,\wh\phi)$ is real.
We note that $(\gamma/2)(\phi - i\wh\phi) = (\wh\gamma/2)(\wh\phi - i \phi)$ where $\wh\gamma = \sqrt{4-Q^2} - i Q$, which illustrates the symmetric roles played by $\phi$ and $\wh\phi$.

\begin{proof}[Heuristic derivation of ``Proposition'' \ref{prop-supercritical-law}]
By~\eqref{eqn-liouville-action} and~\eqref{eqn-gff-law}, the joint law of $(\Phi_2,\Psi)$ is proportional to
\eqb \label{eqn-joint-law}
\exp\left( -\mcl S_{\op{L}}^{\ccL=25}(\phi_2) -\int_{ \BB D}  |\nabla_g \psi|^2   \, dM_g \right)  \BB 1\{\psi|_{\bdy\BB D} = 0\} D\phi_2 \,D\psi .
\eqe 
To convert this to an expression for the joint law of $(\Phi,\wh\Phi)$ defined in~\eqref{eqn-action-rotate}, we make the change of variables
\eqbn
\phi_2 = \frac{Q}{2} \phi + \frac{\sqrt{4-Q^2}}{2} \wh\phi ,\quad \psi = \frac{\sqrt{4-Q^2}}{2} \phi - \frac{Q}{2} \wh\phi .
\eqen
This is an orthogonal transformation of the pair of functions $(\phi,\wh\phi)$, so $D\phi_2\,D\psi$ becomes the ``uniform measure'' $D\phi\,D\wh\phi$ on pairs of functions $(\phi,\wh\phi)$ on $\ol{\BB D}$. By direct substitution, we also have
\allb
&\BB 1\{\psi|_{\bdy\BB D} = 0\} = \BB 1\{ \sqrt{4-Q^2} \phi|_{\bdy\BB D} = Q\wh\phi|_{\bdy\BB D}\} \notag\\
&|\nabla_g \phi_2|^2 + |\nabla_g \psi|^2 = |\nabla_g \phi |^2 + |\nabla_g \wh\phi|^2   \notag\\
&2R_g \phi_2 =  Q R_g \phi + \sqrt{4-Q^2} R_g \wh\phi  \quad \text{and} \quad
2K_g \phi_2 =  Q K_g \phi + \sqrt{4-Q^2} K_g \wh\phi \notag\\
&e^{\phi_2}|_{\bdy\BB D} 
= \exp\left(  \frac{Q}{2} \phi + \frac{\sqrt{4-Q^2}}{2} \wh\phi \right) |_{\bdy\BB D}      
= \exp\left( \frac{\gamma}{2} (\phi - i \wh\phi) \right) |_{\bdy\BB D}  .
\alle 
Note that the imaginary part of $\frac{\gamma}{2} (\phi - i \wh\phi)$ is zero due to the relation $\sqrt{4-Q^2} \phi|_{\bdy\BB D} = Q\wh\phi|_{\bdy\BB D}$. 
Plugging the above equations into~\eqref{eqn-liouville-action} (with $2$ instead of $Q$ and $\phi_2$ instead of $\phi$) and then into~\eqref{eqn-joint-law}, we obtain~\eqref{eqn-supercritical-law}.
\end{proof}

\subsection{A random planar map model for supercritical LQG} 
\label{sec-rpm-supercritical}
 
There are various loop-decorated random planar maps which exhibit exact discrete analogs of the properties of the coupling of the supercritical LQG disk and nested CLE$_4$ in Definition~\ref{def-disk-cle4} (see also Theorem~\ref{thm-cle4-coupling-nested} and Section~\ref{sec-comments}). This leads to precise scaling limit conjectures for such loop-decorated random planar maps toward supercritical LQG disks coupled to CLE$_4$. In this subsection we will explain one such conjecture in detail. This represents the first scaling limit conjecture for a combinatorially natural random planar map model toward supercritical LQG.\footnote{The paper~\cite{ghpr-central-charge} considers a random planar map constructed using the GFF which is expected to converge to supercritical LQG, but this random planar map is not combinatorially natural since its definition is in terms of continuum objects.}

\begin{remark} \label{remark-rpm}
A central difficulty in describing random planar maps which converge to supercritical LQG is that such random planar maps should be infinite, with infinitely many ``ends''. Indeed, this is because a supercritical LQG surface, equipped with its supercritical LQG metric, has infinitely many ``singular points'' which lie at infinite distance from every other point~\cite{dg-uniqueness} (see~\cite{ghpr-central-charge} for some related discussion). Consequently, one cannot simply define random planar maps which converge to supercritical LQG in terms of a Radon-Nikodym derivative with respect to the uniform measure on some finite set. The approach we take in this subsection is to instead define our random planar maps recursively via a multi-type branching process.

We expect that there are also loop-decorated random planar maps in the same universality class as the model considered in this subsection which can be obtained, roughly speaking, as follows. Start from a finite random planar map decorated by an appropriate statistical physics model. Then (using ideas from the theory of branching processes) find a canonical random planar map which is infinite with high probability and which has the property that conditioning the map to be finite gives the original finite random planar map. For example, it may be possible to do this for planar maps decorated by versions of the $O(n)$ loop model or the six vertex model. We plan to explore this further in future work. 

In another direction, the recent work~\cite{cps-random-surface-ym} introduces some random planar maps related to lattice Yang-Mills theory which may have connections to LQG with $\ccL \in (1,25)$ (see the open problems in~\cite{cps-random-surface-ym}). It would be of interest to explore these possible connections further.
\end{remark}

\subsubsection{Triangulations decorated by the fully-packed $O(2)$ loop model}

Our random planar map model for supercritical LQG will be a modified version of triangulations of the disk decorated by the fully packed $O(2)$ loop model. We first review the definition of such decorated triangulations. 

For $k\in\BB N$, let $\mcl M_k^{\op{FPL}}$ be the set of pairs $(M,L)$ where $M$ is a triangulation with (not necessarily simple) boundary of perimeter $k$ and $L$ is a \textbf{fully packed loop configuration} on the dual map $M^*$, i.e., a collection of simple cycles in $M^*$ with the property that each vertex of $M^*$ (i.e., each face of $M$) is visited by exactly one loop. See Figure~\ref{fig-map-gasket}, panel (4) for an illustration of an element of $\mcl M_k^{\op{FPL}}$. For a parameter $\beta > 0$, we define a probability measure on $\mcl M_k^{\op{FPL}}$ by
\eqb \label{eqn-planar-map-O(2)}
\BB P\left[ (M,L) \right] = Z_k^{-1} e^{-\beta \# \mcl F(M)} 2^{\# L} ,
\eqe 
where $\mcl F(M)$ is the set of faces of $M$ and $Z_k$ is a normalizing constant to make the total mass one. 

If $\beta$ is large enough, then~\eqref{eqn-planar-map-O(2)} is well-defined in the sense that the sum $Z_k$ of $e^{-\beta \# \mcl F(M)} 2^{\# L}$ over all elements of $\mcl M_k^{\op{FPL}}$ is finite. Let $\beta_c$ be the smallest value of $\beta$ for which this is the case. It is expected that for $\beta = \beta_c$, we have $Z_k < \infty$ and $Z_k$ decays like $k^{-2}$ as $k\to\infty$ (see, e.g.,~\cite[Section 8]{legall-miermont-large-faces} and the references therein). We henceforth assume that $\beta = \beta_c$. 

Note that under the probability measure~\eqref{eqn-planar-map-O(2)}, the conditional law of $L$ given $M$ is that of the fully packed $O(2)$ loop model on $M^*$. We have the following well-known folklore conjecture for the scaling limit of random planar maps sampled from~\eqref{eqn-planar-map-O(2)}. 

\begin{conj} \label{conj-O(2)}
For $k\in\BB N$, let $(M_k,L_k)$ be sampled from~\eqref{eqn-planar-map-O(2)} with $\beta=\beta_c$. Then $(M_k,L_k)$ converges in law under an appropriate scaling as $k\to\infty$ to a critical ($\gamma=2$) LQG disk with unit boundary length together with an independent CLE$_4$.
\end{conj}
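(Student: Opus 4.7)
The strategy is to exhibit a discrete analog of Theorem~\ref{thm-cle4-coupling-nested} at $\ccL=25$ via a gasket decomposition. Removing the regions enclosed by the outermost loops of $L_k$ from $M_k$ produces a gasket $G_k$—a triangulation with several simple holes, one per outermost loop plus the original outer boundary—together with a list of perimeters $p_1,\ldots,p_m$. A standard Boltzmann-surgery argument shows that, conditional on $G_k$ and on $p_1,\ldots,p_m$, the loop-decorated triangulations filling the holes are independent samples of the same measure~\eqref{eqn-planar-map-O(2)} at boundary length $p_i$. This is the exact discrete counterpart of the $\ccL=25$ case of Theorem~\ref{thm-cle4-coupling-nested}, and it is the recursive scaffolding the proof would iterate.

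First I would determine $\beta_c$ and prove the asymptotic $Z_k \sim c k^{-2}$ (possibly with logarithmic corrections) using the generating-function and matrix-integral techniques for $O(n)$-decorated maps. The exponent $-2$, together with the criticality $n=2$, pins down the $\gamma=2$/CLE$_4$ universality class; in particular, the rescaled outermost perimeters should converge to the positive jumps of the $3/2$-stable growth-fragmentation of a Brownian excursion that appears at the end of Section~\ref{sec-boundary-law}. Next I would identify the scaling limit of the gasket itself: stripped of loop information, $G_k$ is a Boltzmann-type triangulation whose effective face weights are inherited from the partition functions of the removed inner disks, and it should converge to a critical LQG disk. Two natural attack routes are an adaptation of the mating-of-trees encoding to the critical case and a Cardy-embedding-type argument.

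Given the gasket limit and the convergence of outermost perimeters, one proceeds by induction on the nesting depth: inside each outermost loop one applies the induction hypothesis, and the conditional-independence structure recalled above upgrades this to joint convergence of the loop-decorated map toward a $\gamma=2$ LQG disk decorated by a nested CLE$_4$. Independence of the limiting LQG surface and CLE$_4$ is then forced by the degeneration of the coupling of Definition~\ref{def-disk-cle4} at $Q=2$, where the $\Psi$-term vanishes, so that the scaling-limit coupling must reduce to the product of a critical LQG disk and an independent CLE$_4$.

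The principal obstacle is the criticality of the target. At $n=2$ the $O(n)$ loop model sits precisely at the junction of its dilute and dense phases, producing logarithmic corrections in $Z_k$ and making the relevant boundary measure a critical, rather than subcritical, Gaussian multiplicative chaos. Nearly all existing mating-of-trees or SLE/LQG scaling-limit results for $O(n)$-decorated triangulations handle only the range $n\in(0,2)$, and extending them to $n=2$ requires a delicate resummation that tracks these log corrections and avoids the vanishing of the standard subcritical normalizations. A secondary difficulty is obtaining joint convergence of the map and the loop configuration rather than merely the marginal convergence of either; the recursive structure above reduces this to simultaneously controlling the gasket geometry and the perimeter cascade at each scale, but the two controls must be carried out in a mutually compatible embedding, which is where a Cardy-embedding or peeling-based coupling would likely be indispensable.
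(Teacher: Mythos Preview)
The statement you are attempting to prove is explicitly a \emph{conjecture} in the paper, introduced as a ``well-known folklore conjecture'' for which no proof is given. There is therefore no proof in the paper to compare your proposal against; the authors invoke Conjecture~\ref{conj-O(2)} only as motivation for their supercritical random planar map construction, not as an established result.

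What you have written is not a proof but a research program, and you yourself identify the central gap: the key analytic inputs (the asymptotic $Z_k \sim c k^{-2}$ for critical $O(2)$ on triangulations, convergence of the gasket to a critical LQG disk, and the upgrade of subcritical mating-of-trees or Cardy-embedding machinery to $\gamma=2$) are all open problems. In particular, your appeal to ``a standard Boltzmann-surgery argument'' for the gasket decomposition is fine at the discrete level, but the passage from that discrete Markov property to the continuum statement requires exactly the convergence results you flag as missing. Your final paragraph is honest about this; the proposal should be read as a plausible roadmap toward the conjecture rather than a proof, and indeed the paper treats the statement the same way.
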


Possibly topologies of convergence in Conjecture~\ref{conj-O(2)} include versions of the Gromov-Hausdorff convergence for metric spaces decorated by loops (see, e.g.,~\cite[Section 6.1]{ghs-metric-peano}) and convergence of the loop-decorated random planar map under an appropriate embedding into $\BB D$ (e.g., circle packing or Tutte embedding). 

\subsubsection{Gasket decomposition}
\label{sec-gasket}

Our goal is to modify the pair $(M_k,L_k)$ to get a family of random planar maps which should converge in law to supercritical LQG coupled with CLE$_4$.  

To this end, we recall the \textbf{gasket decomposition} of $(M,L)$~\cite{bbg-recursive-approach,bbg-bending}. For $(M,L) \in \mcl M_k^{\op{FPL}}$, the \textbf{gasket} of $(M,L)$ is the planar map whose vertex (resp.\ edge) set consists of all vertices (resp.\ edges) of $M$ which can be reached by a path of edges started from $\bdy M$ which does not cross any loop of $L$, and whose face set consists of the ``holes'' enclosed by these edges. Each face of $G$ corresponds to one of the outermost loops of $L$ (i.e., loops which are not disconnected from $\bdy M$ by any other loops). The planar map $G$ is a discrete analog of the CLE$_4$ gasket, i.e., the set of points in the disk which are not surrounded by any CLE$_4$ loop. 

For $k \geq 1$ and $k'\geq 0$, a \textbf{ring} with outer boundary length $k$ and inner boundary length $k'$ is a planar map consisting of a cyclically ordered set of triangles, each of which shares one edge with each of its neighbors, plus two additional marked faces with perimeters $k$ and $k'$, respectively, which we call the \textbf{outer face} and the \textbf{inner face} (by convention, a face of perimeter 0 consists of a single vertex). For $(M,L) \in \mcl M_k^{\op{FPL}}$, each loop $\ell\in L$ gives rise to a ring by looking at the triangles visited by $\ell$ and forgetting any identifications between pairs of edges of these triangles which are not crossed by $\ell$. 

\begin{figure}[ht!]
\begin{center}
\includegraphics[width=0.75\textwidth]{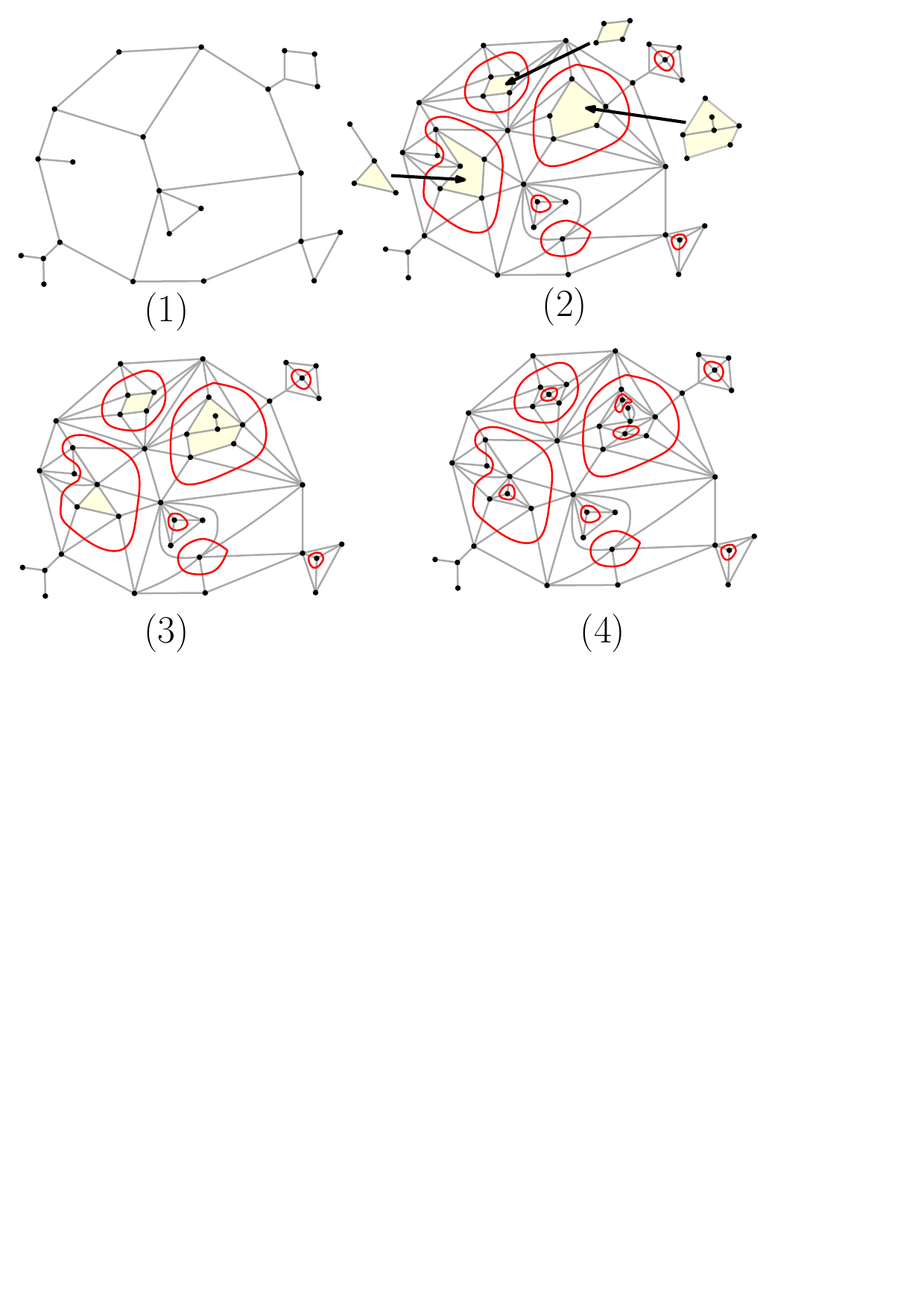}  
\caption{\label{fig-map-gasket} The iterative procedure for sampling a planar map decorated by the fully packed $O(2)$ loop model. \textbf{(1)} The initial gasket $G$. \textbf{(2)} For each $f\in\mcl F(G)$, we sample a ring $R$ decorated by a loop $\ell$ and identify the outer boundary of the ring with the inner boundary of $f$ (the inner face of each ring is shown in light yellow). Then, we sample another gasket with outer boundary length equal to the inner boundary length of $\ell$. \textbf{(3)} We identify the outer boundary of each second-generation gasket with the inner boundary of its corresponding ring. Note that two edges on the inner boundary of one of the rings get identified to the same multiplicity-two edge on the boundary of one of the gaskets. \textbf{(4)} We iterate the procedure inside each face of the second-generation gaskets. In this case, each of the second-generation rings has zero inner boundary length, so the process terminates and the resulting loop-decorated planar map is a sample from~\eqref{eqn-planar-map-O(2)}. 
}
\end{center}
\vspace{-3ex}
\end{figure}

As explained in~\cite{bbg-recursive-approach,bbg-bending}, for $k\in\BB N$ a sample $(M,L)$ from the probability measure~\eqref{eqn-planar-map-O(2)} can be generated via the following recursive procedure. See Figure~\ref{fig-map-gasket} for an illustration.
\begin{enumerate}[$(i)$]
\item Sample the gasket $G$ of $(M,L)$ from its marginal law.
\item \label{step-ring} For each hole $f \in \mcl F(G)$ of perimeter $\op{perim}(f)$, sample a ring $R$ with outer boundary length $\op{perim}(f)$ and random inner boundary length from an appropriate probability measure depending only on $\op{perim}(f)$. 
\item Identify the edges of the outer boundary of $R$ with the edges of the boundary of $f$ in cyclic order. Let $\ell $ be the loop which traverses the triangles of $R$ in cyclic order.
\item Let $k'$ be the inner boundary length of $R$. Conditional on $R$, let $G_f$ be sampled from the gasket of a loop-decorated map with the law~\eqref{eqn-planar-map-O(2)} with $k'$ in place of $k$ (if $k' = 0$, we take $G_f=\emptyset$). Identify the edges of the outer boundary of $G_f$ with the edges of the inner boundary of $R$ in cyclic order.
\item After performing the above procedure for each face $f\in \mcl F(G)$, we iterate the procedure in each face $f'\in \bigcup_{f\in \mcl F(G)} G_f$. Almost surely, this procedure will eventually terminate, in the sense that after finitely many iterations the inner boundary lengths of all of the rings will be zero. At this point the resulting planar map decorated by all of the loops $\ell$ in the rings $R$ will have the law~\eqref{eqn-planar-map-O(2)}. 
\end{enumerate}
The above procedure gives us a description of a sample from~\eqref{eqn-planar-map-O(2)} in terms of a multi-type branching process, where the types are the perimeters of the holes (faces of the gasket) at each stage of the iteration. This procedure can be seen as a discrete analog of the Markov property for the CLE$_4$-decorated critical LQG disk (i.e., Theorem~\ref{thm-cle4-coupling-nested} for $\ccL = 25$).

\subsubsection{Modifying the gasket decomposition}
\label{sec-gasket-modify}

We now want to modify the recursive procedure of Section~\ref{sec-gasket} to get a loop-decorated random planar map which should converge to CLE$_4$-decorated supercritical LQG, for a given $\ccL\in (1,25)$. We first use our continuum theory to determine what modifications we should make.

In the setting of Definition~\ref{def-disk-cle4}, let 
\eqbn
\op{gask}(\Gamma^1) = \ol{\bigcup_{\ell\in\Gamma^1} \ell} 
\eqen
be the gasket of the non-nested CLE$_4$ $\Gamma^1$ consisting of the outermost loops of $\ol\Gamma$. 
Then heuristically speaking we have
\eqbn
\Phi|_{ \op{gask}(\Gamma^1)} = \frac{Q}{2} \Phi_2|_{\op{gask}(\Gamma^1)} ,
\eqen
where $\Phi$ and $\Phi_2$ are the supercritical and critical LQG disk fields from Definition~\ref{def-disk-cle4}, respectively.
Indeed, we have $\Phi = (Q/2)\Phi_2 + (\sqrt{4-Q^2}/2)\Psi$ and $\op{gask}(\Gamma^1)$ is a zero level set of $\Psi$, so heuristically $\Psi|_{\op{gask}(\Gamma^1)} = 0$.\footnote{The height of the field $\Psi$ jumps by $\pm\pi $ when we cross each loop $\ell \in \Gamma^1$, but the values of $\Psi$ on $\ell$ are still zero when viewed from the exterior of the loop. 
}
Since our linear combination of $\Phi_2$ and $\Psi$ behaves nicely under LQG coordinate change (see the proof of Lemma~\ref{lem-bdy-coord}), this means that critical LQG objects associated with $\Phi_2|_{\op{gask}(\Gamma^1)}$ can also be viewed as supercritical LQG objects associated with $\Phi|_{\op{gask}(\Gamma^1)}$, just like in the case of the critical and supercritical LQG length measures~\eqref{eqn-supercritical-length}.\footnote{For example, the supercritical LQG measure on $\op{gask}(\Gamma^1)$ with respect to $\Phi$ should be the same as the critical LQG measure on $\op{gask}(\Gamma^1)$ with respect to $\Phi_2$. A rigorous construction of this measure has not yet been written down, but it should arise, e.g., as a Gaussian multiplicative chaos with respect to the canonical Euclidean measure on $\op{gask}(\Gamma)$~\cite[Proposition 4.5]{ms-cle-measure}. As another example, the supercritical LQG intrinsic metric on $\op{gask}(\Gamma^1)$ with respect to $\Phi$ should be the same as the critical LQG intrinsic metric on $\op{gask}(\Gamma^1)$ with respect to $\Phi_2$. This measure has also not been rigorously constructed, but see~\cite[Section 1.3]{msw-simple-cle-lqg}.}
This suggests that the gasket of a loop-decorated random planar map which converges to the CLE$_4$-decorated supercritical LQG disk as in Definition~\ref{def-disk-cle4} should be in the same universality class as the gasket of the fully packed $O(2)$-loop model decorated triangulation. 

For a critical LQG disk decorated by an independent CLE$_4$, the LQG length of each of the CLE$_4$ loops as measured from the inside and outside of the loop are the same (this follows, e.g., from~\cite[Theorem 1.2]{hp-welding} and a local absolute continuity argument). Hence, Conjecture~\ref{conj-O(2)} suggests that when $\op{perim}(f)$ is large, the measure on rings in Step~\eqref{step-ring} above should be supported on rings whose inner and outer boundary lengths differ by a quantity which is much smaller than $\op{perim}(f) $. 

In contrast, by Proposition~\ref{prop-gap}, in the setting of Definition~\ref{def-disk-cle4}, for each CLE$_4$ loop $\ell \in \ol\Gamma$, the supercritical LQG length of $\ell$ as measured from the outside of the loop and the inside of the loop differ by a factor of $  \exp\left( \pm \frac{\sqrt{4-Q^2}}{Q}  \pi\right) $, where the signs are chosen by independent fair coin flips for each CLE$_4$ loop.

In light of the above discussion, to construct a loop-decorated planar map which should converge to the CLE$_4$ decorated supercritical LQG disk, one should modify the iterative procedure of Section~\ref{sec-gasket} so that we still add the gasket of a fully packed $O(2)$-loop model decorated triangulation at each step, but we force a discrepancy between the inner and outer boundary lengths of the rings.

To this end, fix $\ccL \in (1,25)$ and let 
\eqb \label{eqn-ring-ratio}
s := \exp\left( \frac{\sqrt{4-Q^2}}{Q} \pi \right) , \quad \text{where} \quad Q = \sqrt{\frac{\ccL-1}{6}}   .
\eqe
We perform exactly the same iterative procedure as in Section~\ref{sec-gasket}, except that step~\eqref{step-ring} is replaced by the following procedure.
\begin{enumerate}
\item[$(ii')$] \label{step-ring'} For each hole $f \in \mcl F(G)$ of perimeter $\op{perim}(f)$, independently flip a fair coin. If the coin comes up heads, sample a uniformly random ring $R$ with outer boundary length $\op{perim}(f)$ and inner boundary length $\lfloor s \op{perim}(f) \rfloor$. If the coin comes up tails, instead sample a uniformly random ring $R$ with outer boundary length $\op{perim}(f)$ and inner boundary length $\lfloor s^{-1} \op{perim}(f) \rfloor$.  
\end{enumerate}
\newcommand{\refRing}{{(\hyperref[step-ring']{$ii'$})}}
The other steps in the above procedure remain unchanged. 

Due to Conjecture~\ref{conj-O(2)}, the discussion preceding~\eqref{step-ring'}, and the Markov property of the CLE$_4$-decorated supercritical LQG disk given in Theorem~\ref{thm-cle4-coupling-nested}, we expect that the loop-decorated planar map obtained from the above modified iteration procedure should converge to the CLE$_4$ decorated supercritical LQG disk. 

Since supercritical LQG surfaces have ``spikes'' of infinite diameter (singular points) when equipped with the corresponding supercritical LQG metric~\cite{dg-uniqueness}, we expect that the planar maps obtained from the above procedure typically have infinite graph distance diameter, i.e., the associated multi-type branching process should be supercritical. This leads to the following two conjectures.

\begin{conj} \label{conj-supercritical}
For any $s>1$, if we perform the modified iteration procedure as in~\refRing, it holds with probability tending to one as the initial boundary length $k$ tends to $\infty$ that the procedure does \emph{not} terminate after only finitely many steps. Hence, with high probability when $k$ is large, after iterating countably many times we obtain an infinite triangulation $M_k'$ with boundary of perimeter $k$ decorated by an (infinite) fully packed loop configuration $L_k'$ on its dual. 
\end{conj}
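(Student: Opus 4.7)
The plan is to recast the modified iteration as a multi-type branching process (MBP) and establish its supercriticality for $s \neq 1$. A particle of type $p \in \BB N$ corresponds to a face of some intermediate gasket with boundary perimeter $p$; given such a particle, its offspring are generated as follows: (a) sample the gasket of a $\beta_c$-weighted fully packed $O(2)$-loop decorated triangulation with boundary perimeter $p$, producing hole perimeters $\{p_i\}$; (b) independently flip signs $X_i = \pm 1$ with probability $1/2$ each and replace $p_i$ by $\lfloor s^{X_i} p_i\rfloor$. The resulting integers are the offspring types. Let $\mcl M$ denote the associated first-moment operator, and let $\mcl M_{\op{unmod}}$ be the analogous operator without step (b).

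The first key input is that at $\beta = \beta_c$, the unmodified MBP is critical, i.e.\ $\mcl M_{\op{unmod}}$ has Perron eigenvalue equal to $1$. This is implicit in the generating function analysis of Borot--Bouttier--Guitter~\cite{bbg-recursive-approach,bbg-bending}: the tuning $\beta = \beta_c$ is precisely the value that makes the associated branching process critical and produces the conjectured $Z_k \asymp k^{-2}$ asymptotics. Since the signs $X_i$ are independent of the gasket, one has $\mcl M = \tfrac{1}{2}(\mcl M^{(s)} + \mcl M^{(s^{-1})})$ where $(\mcl M^{(t)} f)(p) := \BB E\bigl[\sum_i f(\lfloor t p_i \rfloor)\bigr]$. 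Provided the Perron right eigenfunction $\varphi$ of $\mcl M_{\op{unmod}}$ is asymptotically homogeneous of degree one (i.e.\ $\varphi(\lfloor t p\rfloor)/\varphi(p) \to t$ as $p \to \infty$), which is natural given the scale-invariant structure of the critical loop model, one obtains that the Perron eigenvalue of $\mcl M$ equals $\tfrac{s + s^{-1}}{2} > 1$, yielding supercriticality.

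To conclude that the non-extinction probability tends to $1$ as $k \to \infty$, I would use the standard domination argument for supercritical MBPs. Conditional on the first-generation gasket, the subtrees emanating from distinct holes are independent, so the extinction probability satisfies $q(p) \leq \BB E\bigl[\prod_i q(\lfloor s^{X_i} p_i\rfloor)\bigr]$. By supercriticality there exists $p_0$ with $q(p_0) \leq 1 - \delta$ for some $\delta > 0$. On the other hand, the critical $O(2)$ loop gasket of perimeter $p$ is expected (and can be shown) to produce, after rescaling hole perimeters by $1/p$, a point process converging to the jumps of a $3/2$-stable L\'evy process on $[0,1]$ (consistent with the description in Section~\ref{sec-boundary-law}); in particular the number $N(p)$ of holes with unrescaled perimeter exceeding $p_0$ tends to $+\infty$ in probability as $p \to \infty$. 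Combining these two facts gives $q(p) \leq \BB E[(1-\delta)^{N(p)}] + o(1) \to 0$, and then a direct application of $q(k) \to 0$ completes the conjecture.

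The main obstacle will be rigorously establishing the two inputs above: that $\mcl M_{\op{unmod}}$ has Perron eigenvalue exactly $1$ with a right eigenfunction asymptotically homogeneous of degree one, and that the gasket of a perimeter-$p$ critical $O(2)$ triangulation contains $N(p) \to \infty$ holes of perimeter at least $p_0$. Both should be consequences of a careful Tauberian analysis of the generating functions derived in~\cite{bbg-recursive-approach,bbg-bending}, and alternatively would follow from the (currently conjectural) Conjecture~\ref{conj-O(2)} via the corresponding CLE$_4$ statements in the continuum. A secondary, more technical issue is handling the countably infinite type space of the MBP rigorously, which I would address by truncation to finite sets of types together with a monotonicity argument transferring survival back to the full process.
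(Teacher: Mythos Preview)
The paper does not prove this statement: it is explicitly stated as a \textbf{Conjecture} (Conjecture~\ref{conj-supercritical}), with no proof or proof sketch given. The surrounding text merely motivates the conjecture by analogy with the continuum picture (singular points of the supercritical LQG metric) and leaves it open. So there is no ``paper's own proof'' to compare against.

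Regarding your proposal as a proof strategy: the branching-process framing is the natural one, and the heuristic that the modification multiplies the effective growth rate by $\tfrac12(s+s^{-1})>1$ is exactly the intuition one would expect. You correctly flag the two substantive gaps (Perron eigenvalue equal to $1$ with an eigenfunction of asymptotic degree one, and divergence of the number of large holes), and you are right that at present these would rely on either unproven generating-function asymptotics for the critical $O(2)$ triangulation or on Conjecture~\ref{conj-O(2)} itself. There is one further point you should be careful about: your operator $\mcl M_{\op{unmod}}$ takes the offspring of a particle of type $p$ to be the \emph{gasket hole perimeters} $\{p_i\}$, whereas the branching process whose criticality at $\beta_c$ is discussed in~\cite{bbg-recursive-approach,bbg-bending} has offspring given by the \emph{inner} ring boundary lengths (after step~$(ii)$, not step~$(i)$). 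These two processes are not the same, and the criticality statement from the literature concerns the latter. They should have the same asymptotic Perron eigenvalue because the ring inner and outer lengths differ by $o(p_i)$ in the critical model (as noted in the paper just before~\eqref{eqn-ring-ratio}), but turning this into a rigorous identification of eigenvalues on a countable type space is an additional step you would need to carry out. As it stands, your proposal is a plausible roadmap rather than a proof, consistent with the paper's treatment of the statement as a conjecture.
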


\begin{conj} \label{conj-supercritical-conv}
For $k\in\BB N$, let $(M_k',L_k')$ be sampled from the modified iteration procedure as in Conjecture~\ref{conj-supercritical}. Then $(M_k',L_k')$ converges under an appropriate scaling limit as $k\to\infty$ to a CLE$_4$-decorated supercritical LQG disk as in Definition~\ref{def-disk-cle4}, with the central charge parameter $\ccL \in (1,25)$ depending on $s$ as in~\eqref{eqn-ring-ratio}.  
\end{conj}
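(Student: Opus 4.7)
The plan is to bootstrap from Conjecture~\ref{conj-O(2)} by exploiting the fact that both the discrete and continuum models are described by multi-type branching processes whose one-step dynamics I will match. Fix a scaling $c_k \to 0$ (expected to be of order $k^{-1/2}$, determined by the universality class of the gasket) so that rescaled perimeters on the map side match boundary lengths on the LQG side. The target is joint convergence of $(M_k', L_k')$, with all perimeters rescaled by $c_k$, to a standard embedding $(\Phi,\ol\Gamma)$ of a unit-boundary-length CLE$_4$-decorated supercritical LQG disk with $\ccL$ determined by $s$ via~\eqref{eqn-ring-ratio}. A natural topology is joint convergence of the loops as rectifiable curves under a fixed embedding (Tutte or circle-packing, as envisioned in Conjecture~\ref{conj-O(2)}), together with convergence of the entire genealogical tree of rescaled boundary lengths.

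Step one is to match the first generation. By Conjecture~\ref{conj-O(2)}, the unmodified $(M_k, L_k)$ converges to a critical LQG disk $\Phi_2$ with an independent CLE$_4$ $\Gamma^1$, and in particular the rescaled face perimeters $c_k\{\op{perim}(f)\}_{f \in \mcl F(G_k)}$ converge jointly to the critical lengths $\{\nu_{\Phi_2}(\ell)\}_{\ell \in \Gamma^1}$. On the continuum side,~\eqref{eqn-inner-length-total} for $n=1$ gives $\nu_\Phi^{\op{in}}(\ell) = s^{X_\ell}\, \nu_{\Phi_2}(\ell)$ with i.i.d.\ signs $X_\ell \in \{\pm 1\}$, which is exactly the factor used in step~\refRing. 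Hence the rescaled inner perimeters of the first-generation rings of $(M_k', L_k')$ converge jointly in law to the first-generation supercritical inner boundary lengths. Combining with the convergence of the gasket $G_k$ itself, together with the identity $\Phi = (Q/2)\Phi_2$ on the CLE$_4$-gasket (Section~\ref{sec-gasket-modify}), which lets one reinterpret the limit of $G_k$ as a subset of the supercritical LQG surface, yields the one-step convergence.

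Step two is the recursion. The continuum Markov property (Theorem~\ref{thm-cle4-coupling-nested}) says that, conditional on the first-generation inner lengths, the LQG surfaces inside the first-generation loops are conditionally independent CLE$_4$-decorated supercritical LQG disks with those boundary lengths, and the discrete procedure has the same Markov property by construction. Assuming convergence at generation $n$, the $(n{+}1)$st generation inside each face is a conditionally independent copy of the same procedure at a new scale, so step one applied to each yields convergence at generation $n+1$. Upgrading to convergence of the entire infinite loop decoration requires tightness that is uniform over the starting perimeter -- which in turn requires a quantitative form of Conjecture~\ref{conj-O(2)} -- plus a diagonal extraction in the depth $n$.

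The principal obstacle is that the central input, Conjecture~\ref{conj-O(2)}, is itself open, so at best this argument is conditional. Even granting it, a second serious difficulty arises at the ``ends'' of the branching tree: by Conjecture~\ref{conj-supercritical} the tree carries infinitely many infinite lines of descent, which should accumulate at the singular points of the supercritical LQG metric~\cite{dg-uniqueness}, and controlling the discrete maps uniformly near these singularities requires tail estimates on both the continuum multi-type branching process of Section~\ref{sec-boundary-law} and its discrete analog. Identifying a topology robust enough to accommodate these infinite ``spikes,'' while still strong enough to identify the limit as a CLE$_4$-decorated LQG disk in the sense of Definition~\ref{def-disk-cle4}, is the most delicate part of the program.
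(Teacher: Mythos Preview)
The statement you are addressing is a \textbf{Conjecture}, not a theorem: the paper does not prove it and does not claim to. There is therefore no ``paper's own proof'' to compare your attempt against. What the paper provides is heuristic motivation for why the modified random planar maps should converge to the CLE$_4$-decorated supercritical LQG disk (the discussion in Section~\ref{sec-gasket-modify}), together with a remark after the conjecture on what the right topology might be, and Problem~\ref{prob-rpm-loops}, which singles out convergence of the rescaled loop lengths as a natural first step.

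Your write-up is not a proof either, and to your credit you say so: you flag that the entire argument is conditional on Conjecture~\ref{conj-O(2)} (which is open), that a quantitative/uniform version of that conjecture would be needed to pass to infinite depth, and that the topology near the ends of the branching tree is not understood. These are exactly the obstructions the paper identifies (see the paragraph following Conjecture~\ref{conj-supercritical-conv} and Problems~\ref{prob-rpm-embedding} and~\ref{prob-rpm-loops}). So your document is best read as a plausible roadmap that is consistent with the paper's own heuristics, not as a proof proposal in the usual sense. If you intend to present it as a proof, the honest assessment is that every step rests on an unproven input: Conjecture~\ref{conj-O(2)}, uniform-in-perimeter tightness for the gasket, control of the discrete branching process near its infinite ends, and even the existence of a suitable embedding or Gromov--Hausdorff-type topology for infinite maps with infinitely many ends.
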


Regarding the topology of convergence in Conjecture~\ref{conj-supercritical-conv}, we expect that the map $M_k'$ admits an embedding into the disk $\BB D$ wherein the vertices accumulate at infinitely many interior points of $\BB D$, and under which $L_k'$ converges to CLE$_4$ and the graph distance on $M_k'$ (appropriately re-scaled) converges to the supercritical LQG metric. Constructing such an embedding is already an interesting problem (see Problem~\ref{prob-rpm-embedding}). Alternatively, one could attempt to use a variant of Gromov-Hausdorff convergence, but it is not clear what variant is appropriate since the supercritical LQG metric is not locally compact~\cite[Proposition 1.14]{pfeffer-supercritical-lqg}.

\section{Open problems}
\label{sec-open-problems}

We discuss some additional open problems besides the ones already presented in Section~\ref{sec-rpm-supercritical} (see Conjectures~\ref{conj-supercritical} and~\ref{conj-supercritical-conv}). 
In Section~\ref{sec-supercritical-disk}, we gave a definition of canonical supercritical LQG surfaces with the disk topology. It is of interest to extend this construction to LQG surfaces with other topologies. 

\begin{prob} \label{prob-other-supercritical}
What is the right notion of supercritical LQG surfaces with other topologies, e.g., the sphere, the annulus, the half-plane, or the torus? What about supercritical LQG surfaces with various types of bulk or boundary insertions (log singularities)? Do such surfaces have nice relationships with SLE and/or random planar maps?
\end{prob}

See Remark~\ref{remark-free-boundary} for some additional discussion related to Problem~\ref{prob-other-supercritical}.
One special case of Problem~\ref{prob-other-supercritical} which can already be addressed via the methods of this paper is as follows.
By~\cite{ss-contour}, one view an SLE$_4$ curve from 0 to $\infty$ in the upper half-plane $\BB H$ as a level line of the field $\Psi$ on $\BB H$ which is a zero-boundary GFF plus $ \op{arg}(\cdot)$. By combining this with the relationship between SLE$_4$ and the $\gamma=2,\alpha=1$ LQG wedge~\cite{hp-welding} and using a construction similar to the one of Definition~\ref{def-disk-cle4}, one gets a coupling of an SLE$_4$ curve with a certain supercritical LQG surface parametrized by $\BB H$. In this coupling, the supercritical LQG surfaces parametrized by the regions to the left and right of the SLE$_4$ curve are independent and each has similar local behavior to the supercritical LQG disk (Definition~\ref{def-supercritical-disk}). The original supercritical LQG surface has similar local properties to the supercritical LQG disk away from the origin, but has a singularity of the form $ \frac{Q}{2} \log|\cdot| +  \frac{\sqrt{4-Q^2}}{2}  \op{arg}(\cdot)$ at the origin.

\begin{prob} \label{prob-single-sle4}
What else can be said about supercritical LQG surfaces with singularities of the form $\alpha \log|\cdot| + \beta \op{arg}(\cdot)$ and their relationship to SLE$_4$? 
Is there a corresponding random planar map story? 
\end{prob} 

There are other local sets for the zero-boundary GFF $\Psi$, besides just its CLE$_4$ level lines, with the property that the boundary data for $\Psi$ is constant on each complementary connected component of the set. Such sets include the \textbf{first passage sets} and \textbf{two-valued local sets} of $\Psi$ studied in~\cite{asw-local-sets,aru-sepulveda-2valued}. The argument of Theorem~\ref{thm-cle4-coupling-nested} immediately extends to give similar coupling statements for these sets with the supercritical LQG disk, provided one first has an appropriate Markov property when we look at the set together with an independent critical LQG disk. This motivates the following problem.  

\begin{prob} \label{prob-critical-disk}
Suppose that $A$ is one of the special local sets of the GFF $\Psi$ mentioned above, and let $\Phi_2$ be an embedding of a critical LQG disk, sampled independently from $A$. Show that the critical LQG surfaces parametrized by the complementary connected components of $A$ are conditionally independent critical LQG disks given their boundary lengths. Use this to get a coupling of $A$ with the supercritical LQG disk. Does this coupling have any interesting features which are not seen in the coupling with CLE$_4$ considered in this paper? 
\end{prob}

There are various ways of embedding planar maps into the plane under which certain random planar maps are expected (and in some cases proven) to converge to subcritical or critical LQG. Examples include circle packing (see the overview in~\cite{stephenson-circle-packing}), Tutte embedding (see, e.g.,~\cite{gms-tutte}), Smith diagrams~\cite{brooks-dissection} (see also~\cite{bgs-smith}), Cardy embedding~\cite{hs-cardy-embedding}, and Riemann uniformization of the surface obtained by identifying the faces with regular polygons of unit side length.

\begin{prob} \label{prob-rpm-embedding}
Are there variants of any of the above embeddings which are defined for an infinite planar map with boundary with infinitely many ends, e.g., a typical realization of one of the random planar maps defined in Section~\ref{sec-gasket-modify}? What can be said about these embeddings? 
\end{prob}

The motivation for Problem~\ref{prob-rpm-embedding} is, of course, to find an embedding under which random planar maps (possibly decorated by loops) converge to the supercritical LQG disk (possibly decorated by CLE$_4$). The embeddings we are looking for should have the property that there is an infinite, totally disconnected set of points in the disk where the vertices of the planar map accumulate. 
Embedded random planar maps which converge to supercritical LQG should have similar properties to the random dyadic tilings of the plane constructed from the GFF in~\cite{ghpr-central-charge}.

\medskip
\noindent\textit{Update:} A version of the Tutte embedding for planar maps with boundary having infinitely many ends is constructed in the forthcoming paper~\cite{gs-reflected-rw}. It is still open to define versions of other embeddings for such maps, such as circle packing and Riemann uniformization.
\medskip

As a first step toward proving the random planar map convergence of Conjecture~\ref{conj-supercritical-conv}, one could try to prove the following.

\begin{prob} \label{prob-rpm-loops}
Show that in the setting of Section~\ref{sec-gasket-modify}, the joint law of the (appropriately re-scaled) lengths of the loops $L_k'$ converges to the joint law of the (inner) supercritical LQG lengths of the CLE$_4$ loops on a CLE$_4$-decorated supercritical LQG disk, as described in Section~\ref{sec-boundary-law}.
\end{prob}

The analog of Problem~\ref{prob-rpm-loops} for the $O(n)$ loop model on a random quadrangulation, with $n \in (0,2)$, has already been solved in~\cite{bbg-recursive-approach,msw-simple-cle-lqg,ccm-cascade,bbck-growth-frag}.
Due to the relationship between the map of Section~\ref{sec-gasket-modify} and the $O(2)$ model on a random triangulation, it appears that the main step needed to solve Problem~\ref{prob-rpm-loops} is to generalize the results of~\cite{bbg-recursive-approach,ccm-cascade} to the case of $O(2)$-decorated triangulations. We remark that the paper~\cite{bcm-cauchy-process} proves a number of properties of planar maps whose face degree distribution is in the domain of attraction of a Cauchy process; the gasket of an $O(2)$-decorated triangulation is believed, but not proven, to have this property. 

\medskip
\noindent\textit{Update:} It is proven independently in~\cite{kammerer-O2-gasket,ash-volume} that the gaskets of random bipartite planar maps decorated by the critical rigid $O(2)$ loop model are discrete $3/2$-stable maps. This result, combined with~\cite[Theorem 1.2]{bgs-supercritical-crt}, leads to a solution to the variant of Problem~\ref{prob-rpm-loops} with these gaskets used in place of the gaskets of $O(2)$-decorated triangulations in the construction of Section~\ref{sec-rpm-supercritical}. 
\medskip

The central objects of study in Liouville conformal field theory are the \textbf{Liouville correlation functions}, which are regularized versions of expectations of the form
\eqb \label{eqn-corr-function}
\BB E\left[  e^{-\mu \nu_\Phi(\bdy U)}  \prod_{j=1}^n e^{\alpha_j \Phi(z_j)}   \right] 
\eqe
where $(U,\Phi)$ is an LQG surface (for some underlying Riemann surface $U$ and some central charge $\ccL$), $\alpha_1,\dots,\alpha_n \in \BB R$, $z_1,\dots,z_n \in U\cup \bdy U$, $\nu_\Phi$ denotes central charge-$\ccL$ LQG length, and $\mu > 0$ is a constant. See, e.g.,~\cite{krv-dozz,GKRV-sphere,ars-fzz, rz-boundary-lcft, arsz-structure-constants} for computations of such correlation functions in various settings.

The supercritical LQG disk field of Definition \ref{def-supercritical-disk} can be written as $\Phi = (Q/2) \Phi_2 + \sqrt{1-Q^2/4} \Psi$, where $ \Phi_2$ is the field for a critical LQG disk and $\Psi$ is an independent zero-boundary GFF. Since $\Psi$ is a Gaussian field with known covariance structure, it is easy to compute correlation functions of the type~\eqref{eqn-corr-function} with $\Psi$ in place of $\Phi$. Therefore, if we could compute correlation functions of $ \Phi_2$ then we could also compute correlation functions of $\Phi$. Note that our correlation functions for $\Phi$ have no bulk cosmological constant term (as in~\cite{rz-boundary-lcft}) since we do not have a finite area measure associated for supercritical LQG (c.f.~\cite{bgs-supercritical-crt}). 

The existing results on correlation functions only treat the subcritical case $\gamma < 2$. This provides additional motivation for the following problem.  

\begin{prob} \label{prob-critical-corr}
Extend the formulas for Liouville correlation functions appearing in, e.g.,~\cite{krv-dozz,GKRV-sphere,ars-fzz, rz-boundary-lcft,arsz-structure-constants} to the critical case $\gamma=2$. Use the $\gamma=2$ analogs of the formulas in~\cite{rz-boundary-lcft} to compute correlation functions for the supercritical LQG disks appearing in Definition \ref{def-supercritical-disk}.
\end{prob}
 
Often, when one has a one-parameter families of models which exhibit some sort of duality relation, one sees some special behavior at the self-dual point. As explained in Section~\ref{sec-duality}, for the supercritical LQG disks of central charge $\ccL\in (1,25)$ introduced in this paper, there is a duality relation between central charge $\ccL$ and central charge $26-\ccL$. 

\begin{prob} \label{prob-self-dual}
Is there anything special about LQG with the self-dual central charge value $\ccL =13$?
\end{prob} 

Currently, we do not have any guess as to what sort of special property one might have in the setting of Problem~\ref{prob-self-dual}. We note, however, that work by Gervais, et.\ al.\ (see, e.g.,~\cite{gervais-weak-to-strong,gn-locality-string-models,bg-new-critical-dim}) shows that the cases when $\ccL \in \{7,13,19\}$ are special from an algebraic perspective.

\appendix

\section{Liouville fields and rotations thereof}
\label{sec-disk-rotate}

In this appendix we introduce the Liouville field, following~\cite{ahs-integrability}, and prove that one can ``rotate'' a vector of independent Liouville fields by an orthogonal matrix to get another vector of Liouville fields with the same total central charge (Proposition~\ref{prop-disk-rotate}). This elementary fact is of independent interest and also provides additional justification for Definition~\ref{def-supercritical-disk}. An analogous statement in the setting of imaginary geometry plays a central role in~\cite{ag-mismatched}.

We will work with infinite measures on quantum disks since LQG areas and lengths are not in general preserved under taking linear combinations of fields (this only works in the setting of Section~\ref{sec-supercritical-disk} since one of the fields vanishes on the boundary). We also work on the half-plane $\BB H$ instead of the disk $\BB D$ in this subsection to be consistent with~\cite{ahs-integrability}. 
The following definition corresponds to~\cite[Definition 2.4]{ahs-integrability}, although that paper only allows $\ccL  >25$.  

\begin{defn} \label{def-liouville-field}
Let $\ccL \geq 1$ and as per usual let $Q = \sqrt{(\ccL-1)/6}$. 
Let $ \Phi^0$ be a free-boundary GFF on $\BB H$, with the additive constant chosen so that the average of $ \Phi^0$ over the unit semicircle is zero. 
Independently from $\Phi^0$, let $C \in \BB R$ be ``sampled'' from the infinite measure $e^{-2Q c} \,dc$. 
The \textbf{Liouville field on $\BB H$ with central charge $\ccL$} is the generalized function
\eqb \label{eqn-liouville-field} 
\Phi(\cdot) := \Phi^0(\cdot) - 2Q \log \max\{|\cdot|,1\} + C  . 
\eqe 
We write $\op{LF}_{\BB H}(\ccL)$ for the infinite measure which is the law of $\Phi$.
\end{defn}

For $\ccL = 1$ we have $Q = 0$, and hence the Liouville field with $\ccL = 1$ is the free-boundary GFF plus a constant sampled from Lebesgue measure on $\BB R$. 

In the subcritical case $\ccL>25$, the Liouville field is closely connected to the LQG disk. 
Indeed, let $\BB M_{\ccL}$ be the infinite measure on unmarked LQG disks of central charge $\ccL$, as defined in~\cite[Section 4.5]{wedges} (see also~\cite[Definition 2.2]{ahs-integrability}).  
Let $(\BB H , \Phi)/{\sim}_Q$ be a sample from the infinite measure $\BB M_{\ccL}$. Independently, let $f : \BB H \to \BB H$ be a conformal automorphism of $\BB H$ sampled from the (infinite) Haar measure on the group of such conformal automorphisms (which is isomorphic to $\op{PSL}_2(\BB R)$). 
We call the field $\Phi\circ f + Q\log |f'|$ the \textbf{uniform embedding} of the LQG disk. It is shown in~\cite[Theorem 1.2]{ahs-integrability} that for $\ccL > 25$, the law of the uniform embedding of the LQG disk is a constant multiple of $\op{LF}_{\BB H}(\ccL)$. The precise constant depends on the choice of Haar measure (which is unique up to multiplicative constant).
 
Exactly analogous statements hold with the Liouville field on $\BB C$~\cite[Definition 2.24]{ahs-integrability} instead of on $\BB H$ and the uniform embedding of the LQG sphere in place of the uniform embedding of the LQG disk (this case is also treated in~\cite[Theorem 1.2]{ahs-integrability}). 

\begin{remark} \label{remark-free-boundary}
In the critical case $\ccL = 25$, $\op{LF}_{\BB H}(\ccL)$ should be a constant multiple of the law of the uniform embedding of the critical LQG disk, exactly as in the subcritical case, but a proof has not been written down. 
For $\ccL \in (1,25)$, it is natural to consider the infinite measure on supercritical LQG surfaces which is the law of $(\BB H , \Phi)/{\sim_Q}$, where $\Phi$ is as in~\eqref{eqn-liouville-field}.  
Note that this LQG surface behaves like a free-boundary GFF near the boundary, in contrast to the supercritical LQG disk from Definition~\ref{def-supercritical-disk}. Consequently, there is not an obvious canonical notion of ``area'' or ``boundary length'' for $\op{LF}_{\BB H}(\ccL)$ which we can condition on to get a finite measure.\footnote{\label{footnote-free-boundary} 
An additional intuitive for why we should use a zero-boundary GFF instead of a free-boundary GFF in Definition~\ref{def-supercritical-disk} is as follows. As explained in Section~\ref{sec-duality}, in the coupling considered in this paper the supercritical LQG disk plays the role of both the LQG field and the matter field. In couplings of critical LQG on the disk with an independent matter field, one typically takes the matter field to be a zero-boundary GFF. Hence it is reasonable to want our supercritical LQG disk to have boundary conditions which are in some sense between those of a free-boundary GFF and a zero-boundary GFF.}  
We expect that it is possible to extend the ideas of this paper to get couplings of $\op{LF}_{\BB H}(\ccL)$ for $\ccL\in(1,25)$ with SLE$_4$-type sets (e.g., the arc loop ensemble which describes the level lines of a free-boundary GFF~\cite{qw-common-level-lines}). These couplings should have the property that the LQG surfaces parametrized by the complementary connected components of the set are conditionally independent supercritical LQG disks (in the sense of Definition~\ref{def-supercritical-disk}) given their boundary lengths. 
The above discussion also applies essentially verbatim in the case of the Liouville field on the sphere. 
\end{remark}

\begin{prop} \label{prop-disk-rotate}
Let $n\in\BB N$, let $\cc_1  , \dots , \cc_n \geq 1$, and let $Q_1, ,\dots , Q_n \geq 0$ be the corresponding background charges. 
Let $(\Phi_1 , \dots ,\Phi_n)$ be sampled from the infinite measure $\op{LF}_{\BB H}(\cc_1) \times \dots \times \op{LF}_{\BB H}(\cc_n)$. Let $A$ be an $n\times n$ orthogonal matrix and define 
\eqb \label{eqn-rotated-fields}
\left( \begin{array}{c}
\wh\Phi_1 \\ 
\vdots \\ 
\wh\Phi_n
\end{array}  \right) 
:= A \left( \begin{array}{c}
 \Phi_1 \\ 
\vdots \\ 
 \Phi_n
\end{array}  \right) \quad \text{and} \quad 
\left( \begin{array}{c}
\wh Q_1 \\ 
\vdots \\ 
\wh Q_n
\end{array}  \right) 
:= A \left( \begin{array}{c}
  Q_1 \\ 
\vdots \\ 
  Q_n
\end{array}  \right) .
\eqe  
Also let $\wh\cc_i = 1 + 6 \wh Q_i^2$ for $i=1,\dots,n$. 
Then $\sum_{i=1}^n \wh \cc_i = \sum_{i=1}^n \cc_i$.
Furthermore, if $\wh Q_1,\dots,\wh Q_n \geq 0$, then the law of $(\wh\Phi_1,\dots,\wh\Phi_n) $ is $\op{LF}_{\BB H}(\wh\cc_1) \times \dots \times \op{LF}_{\BB H}(\wh\cc_n)$.
Exactly the same statement is true if we replace the LQG disk by the LQG sphere throughout. 
\end{prop}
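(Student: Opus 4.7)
The plan is to decompose each Liouville field into its independent free-boundary GFF part and its random multiplicative constant part (Definition~\ref{def-liouville-field}), then show that the orthogonal transformation respects both of these independent pieces separately.

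First I would verify the conservation of central charge: since $A$ is orthogonal, $\sum_i \wh Q_i^2 = |A \vec Q|^2 = |\vec Q|^2 = \sum_i Q_i^2$, so
\[
\sum_{i=1}^n \wh\cc_i = n + 6 \sum_{i=1}^n \wh Q_i^2 = n + 6 \sum_{i=1}^n Q_i^2 = \sum_{i=1}^n \cc_i .
\]
Next, write $\Phi_i = \Phi_i^0 - 2Q_i \log\max\{|\cdot|,1\} + C_i$ as in Definition~\ref{def-liouville-field}, so that $(\Phi_1^0,\dots,\Phi_n^0)$ are i.i.d.\ free-boundary GFFs on $\BB H$ with unit-semicircle average zero, independent of $(C_1,\dots,C_n)$ which is ``sampled" from the product infinite measure $\prod_i e^{-2Q_i c_i}\,dc_i$ on $\BB R^n$. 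Applying $A$ componentwise and using linearity together with the definition of $\wh Q_i$, we obtain
\[
\wh\Phi_i = \wh\Phi_i^0 - 2\wh Q_i \log\max\{|\cdot|,1\} + \wh C_i ,
\]
where $\wh\Phi_i^0 := \sum_j A_{ij} \Phi_j^0$ and $\wh C_i := \sum_j A_{ij} C_j$.

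The GFF part is handled by the standard rotation invariance of products of i.i.d.\ centered Gaussian fields. The $\wh\Phi_i^0$ are jointly Gaussian, each has zero unit-semicircle average (since this is a linear functional applied to fields each with that average zero), and a direct covariance computation using $\sum_k A_{ik} A_{jk} = \delta_{ij}$ yields $\op{Cov}(\wh\Phi_i^0(z), \wh\Phi_j^0(w)) = \delta_{ij} G(z,w)$ where $G$ is the free-boundary GFF kernel. Hence $(\wh\Phi_1^0,\dots,\wh\Phi_n^0)$ is again a family of i.i.d.\ free-boundary GFFs with unit-semicircle average zero. For the constants, the change of variables $\vec C = A^\top \wh{\vec C}$ has Jacobian $|\det A| = 1$, and $\vec Q \cdot \vec C = (A^\top \wh{\vec Q})^\top (A^\top \wh{\vec C}) = \wh{\vec Q} \cdot \wh{\vec C}$, so
\[
\prod_{i=1}^n e^{-2 Q_i c_i}\, dc_i = \exp\!\left( -2 \wh{\vec Q} \cdot \wh{\vec C} \right) d\wh c_1 \cdots d\wh c_n = \prod_{i=1}^n e^{-2 \wh Q_i \wh c_i}\, d\wh c_i .
\]
The assumption $\wh Q_i \geq 0$ is what is needed for each factor $e^{-2\wh Q_i \wh c_i}\,d\wh c_i$ to match the marginal ``law" of the constant in Definition~\ref{def-liouville-field} for a central charge $\wh\cc_i \geq 1$ Liouville field.

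Finally I would combine these two ingredients: since the GFF and constant pieces of the original sample are independent, so are the GFF and constant pieces of the transformed sample (they depend on disjoint blocks of randomness), and within each block the joint law is the desired product. Reassembling via the displayed formula for $\wh\Phi_i$ yields that $(\wh\Phi_1,\dots,\wh\Phi_n)$ has law $\op{LF}_{\BB H}(\wh\cc_1) \times \cdots \times \op{LF}_{\BB H}(\wh\cc_n)$. The sphere case is identical: replace the free-boundary GFF by the appropriate GFF on $\BB C$ (with the analogous average-zero normalization used in~\cite{ahs-integrability}), and the same two-step argument goes through verbatim. There is no real obstacle here; the only subtlety is tracking the infinite-measure change of variables and confirming that the hypothesis $\wh Q_i \geq 0$ is what makes each output factor a legitimate Liouville field rather than a variant with the constant weighted in the wrong direction.
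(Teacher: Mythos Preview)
Your proposal is correct and follows essentially the same approach as the paper: decompose each $\Phi_i$ into its free-boundary GFF part and its random additive constant, use rotation invariance of i.i.d.\ Gaussians for the former and a linear change of variables (with Jacobian $|\det A|=1$) for the latter, then reassemble. Your write-up is in fact slightly more explicit than the paper's in tracking the covariance computation, the Jacobian, and the role of the hypothesis $\wh Q_i\geq 0$.
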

\begin{proof}
Since $A$ is an orthogonal matrix, it preserves Euclidean norms and hence
\eqbn
\sum_{i=1}^n \wh \cc_i =n + 6 \sum_{i=1}^n \wh Q_i^2 = n + 6 \sum_{i=1}^n Q_i^2 = \sum_{i=1}^n \cc_i .
\eqen
 
For $i = 1,\dots,n$, write 
\eqb \label{eqn-liouville-field-i}
\Phi_i(\cdot) := \Phi^0_i(\cdot) - 2Q_i \log \max\{|\cdot|,1\} + C_i  
\eqe 
as in~\eqref{eqn-liouville-field}, where $\Phi^0_1,\dots,\Phi^0_n$ are independent free-boundary GFFs on $\BB H$ and the law of $(C_1,\dots,C_n) $ is $\exp\left( - 2\sum_{i=1}^n Q_i c_i \right) \,dc_1\, \dots ,\, dc_n$. We can take $(\Phi^0_1,\dots,\Phi^0_n)$ and $(C_1,\dots,C_n)$ to be independent (i.e., their joint law factors as the product of their marginal laws). 

Define
\eqbn
\left( \begin{array}{c}
\wh\Phi^0_1 \\ 
\vdots \\ 
\wh\Phi^0_n
\end{array}  \right) 
:= A \left( \begin{array}{c}
\wh\Phi^0_1 \\ 
\vdots \\ 
\wh\Phi^0_n
\end{array}  \right)  \quad\text{and} \quad
\left( \begin{array}{c}
\wh C_1 \\ 
\vdots \\ 
\wh C_n
\end{array}  \right) 
:= A \left( \begin{array}{c}
\wh C_1 \\ 
\vdots \\ 
\wh C_n
\end{array}  \right)  .
\eqen
Since $\Phi^0_1,\dots,\Phi^0_n$ are i.i.d.\ Gaussian processes, the rotational invariance of the two-dimensional Gaussian distribution shows that $\wh\Phi^0_1,\dots,\wh\Phi^0_n$ are independent free-boundary GFFs on $\BB H$. By making a linear change of variables, we see that $(\wh C_1,\dots,\wh C_n)$ has the law $\exp\left( - 2\sum_{i=1}^n \wh Q_i \wh c_i \right) \,d\wh c_1\, \dots ,\, d\wh c_n$ and is independent from $(\wh\Phi^0_1,\dots,\wh\Phi^0_n)$. 
Plugging~\eqref{eqn-liouville-field-i} into the definition~\eqref{eqn-rotated-fields} of $\wh \Phi_1,\dots,\wh\Phi_n$ now concludes the proof. The statement for the LQG sphere is proven in the same way. 
\end{proof}

\begin{remark}
The linear combination in Proposition~\ref{prop-disk-rotate} behaves nicely under LQG coordinate change. That is, if $f : U \to \BB D$ is a conformal map, then it is immediate from~\eqref{eqn-rotated-fields} that
\eqbn
\left( \begin{array}{c}
\wh\Phi_1 \circ f + \wh Q_1 \log |f'| \\ 
\vdots \\ 
\wh\Phi_n \circ f + \wh Q_n \log |f'|
\end{array}  \right) 
= A \left( \begin{array}{c}
 \Phi_1 \circ f + Q_1 \log |f'| \\ 
\vdots \\ 
 \Phi_n \circ f + Q_n \log |f'| 
\end{array}  \right) .
\eqen
\end{remark}

\bibliography{cibib, extrabib}
\bibliographystyle{hmralphaabbrv}

\end{document}